\newcommand{\Z}{\mathbb{Z}}
\newcommand{\TT}{\mathcal T}
\newcommand{\RR}{\mathcal R}
\newcommand{\R}{\mathbb R}
\newcommand{\abs}[1]{\left|#1\right|} 
\newcommand{\ceiling}[1]{\left\lceil #1 \right\rceil}
\newcommand{\floor}[1]{\left\lfloor #1 \right\rfloor}
\newcommand{\dv}{\mathrm{d}}
\newcommand{\cI}{\mathcal{I}}
\newcommand{\cR}{\mathcal{R}}
\newcommand{\cD}{\mathcal{D}}
\DeclareMathOperator{\Var}{Var}
\DeclareMathOperator{\Int}{Int}
\newtheorem{theorem}{Theorem}[section]
\newtheorem{lemma}[theorem]{Lemma}
\newtheorem{corollary}[theorem]{Corollary}
\theoremstyle{remark}
\newcommand{\sdfrac}[2]{\mbox{\small$\displaystyle\frac{#1}{#2}$}}
\newcommand{\fdfrac}[2]{\mbox{\footnotesize$\displaystyle\frac{#1}{#2}$}}
\newcommand{\ldfrac}[2]{\mbox{\large$\frac{#1}{#2}$}}
\newcommand{\Ldfrac}[2]{\mbox{\Large$\frac{#1}{#2}$}}
\definecolor{orange}{rgb}{1,0.5,0}
\definecolor{Red}{rgb}{.795,0.015,0.017}
\definecolor{Ggreen}{rgb}{0.,0.675,0.0128}
\definecolor{Bblue}{rgb}{0.16,.32,0.91}
\begin{document}

	\title{Distribution of angles to lattice points seen from a fast moving observer}
	\author{Jack Anderson, Florin P. Boca, Cristian Cobeli, Alexandru Zaharescu}
	
	
	\address[Jack Anderson]{Department of Mathematics, University of Illinois at Urbana-Champaign, Urbana, IL 61801, USA.}
	\email{jacka4@illinois.edu}
	
	\address[Florin P. Boca]{Department of Mathematics, University of Illinois at Urbana-Champaign, Urbana, IL 61801, USA.}
	\email{fboca@illinois.edu}
	
	\address[Cristian Cobeli]{"Simion Stoilow" Institute of Mathematics of the Romanian Academy,~21 Calea Grivitei Street, P. O. Box 1-764, Bucharest 014700, Romania}
	\email{cristian.cobeli@imar.ro}
	
	\address[Alexandru Zaharescu]{Department of Mathematics,University of Illinois at Urbana-Champaign, Urbana, IL 61801, USA,
			and 
			"Simion Stoilow" Institute of Mathematics of the Romanian Academy,~21 
			Calea Grivitei 
			Street, P. O. Box 1-764, Bucharest 014700, Romania}
		\email{zaharesc@illinois.edu}

		\subjclass[2020]{Primary 11P21.
			Secondary: 11K99,   11B99.}
		
		\thanks{Key words and phrases: Lattice points,  
			local spacing statistics,
			gap distribution of angles,
			exponential sums.}
		
		\begin{abstract}
			We consider a square expanding with constant speed seen from an observer moving away with constant acceleration and
			study the distribution of angles between
			rays from the observer towards the lattice points in the square.
			We prove the existence of the gap 
			distribution as time tends to infinity and
			provide explicit formulas for the corresponding density function.
		\end{abstract}
		
		\maketitle
		
		\bigskip
		
		\noindent
		
		\bigskip
		
		\section{Introduction}\label{S:Introduction}
		
			The spacing statistics of the angular distributions of Euclidean or hyperbolic lattice points or of cut-and-project quasicrystals was thoroughly investigated in many works, revealing interesting connections with number theory and ergodic theory. See \cite{BGHJ2014, BCZ1, BG2009, BPPZ, BPZ, CKLW, EMV, EM, Ham, KK, Ma2, MS1, MS2, MV, RS} for a far from exhaustive list of papers on these topics. A basic example arises by considering all lattice points in the dilated triangle~$J\TT$, where
			\begin{equation*}
				\TT=\{ (x,y):0< y\leq x\leq 1\}, \qquad J\rightarrow \infty. \\[4pt] 
			\end{equation*}
			The slopes of the lines $OP$, $O=(0,0)$, $P\in J\TT \cap \Z^2$ with coprime coordinates, represent exactly the Farey points of height $J$ in the interval $[0,1]$. Geometrically, we look at the points $OP$ that are visible to an observer located at the origin $O$. Certainly, there is a subtle difference between considering all visible points versus all points in $J\TT$. The spacing statistics of Farey points and generalizations have been investigated in depth in \cite{Ath2016, AC, ABCZ, BCZ2, BZ, Hal, Hee, Lut, Ma1, PP, Zha}.
			
			In the present paper we consider a related new model, where the observer is no longer fixed.
		More precisely, we consider the lattice points in a square expanding with constant speed, which are viewed from an observer moving far away with constant acceleration.
		The analysis of this new type of counting problems requires different techniques that we develop in this paper.
		\begin{figure}[ht]
			\centering    \includegraphics[angle=0,width=0.95\textwidth]{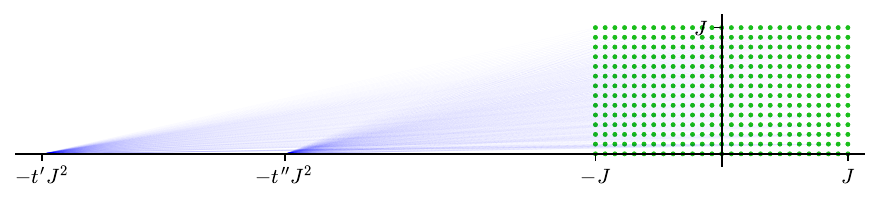}
			\vspace*{-10pt}
			\caption{Two points of view 
				$PoV=(-t'J^2,0)$ and $PoV=(-t''J^2,0)$, for $0<t''<t'$, of an observer who, coming from afar, sees the lattice points in the upper half of the square $[-J,J]^2$.
			}
			\label{FigurePoVs}
		\end{figure}
			
			To illustrate the different features of the angular distribution while keeping calculations more accessible, 
			we only consider here the situation where the observer is located at the point $P_{t,J}=(-tJ^2,0)$ with $t>0$ fixed.
			(Here $t$ is just a parameter controlling the position of the observer and it is not related to time.)
			As we are interested in the distribution of angles, for symmetry reasons it will be sufficient to consider the case 
			of angles between the rays sent by the observer only towards the points above the $x$-axis. 
			In Figure~\ref{FigurePoVs}, these rays sent towards the lattice points can be seen from two different perspectives 
			as we approach the target square.
			Thus the triangles $J\TT$ are replaced by rectangles $J\RR$, $\RR=[-1,1]\times [0,1]$, 
			so that $\lvert P_{t,J} P\rvert \sim \frac{t}{2} \# \RR_J$ as time $J\rightarrow \infty$, 
			where we set $\RR_J =J\RR$ with 
			\begin{equation*}
				N=N_J:=\#\RR_J =(2J+1)(J+1)=2J^2+O(J).   
			\end{equation*}
			We can order the angles $\angle (PP_{t,J} O)$ as 
			$0 = \alpha_1\leq\alpha_2\leq\dots
			\leq \alpha_{N} = \alpha_{\max}$. 
			First, we calculate the average $\Delta_{av}=\frac{1}{N_J}\alpha_{\max}$. We have
			\begin{equation*}
				\tan(\alpha_{\max}) = \frac{J}{tJ^2-J} = \frac{1}{tJ} 
				+ O\left(\sdfrac{1}{J^2}\right),
			\end{equation*}
			so,
			\begin{equation*}
				\alpha_{\max} = \frac{1}{tJ} + O\left(\sdfrac{1}{J^2}\right),
			\end{equation*}
			hence,
			\begin{equation}\label{E:AverageAngle}
				\Delta_{av} = \frac{1}{2tJ^3} + O\left(\sdfrac{1}{J^4}\right).
			\end{equation}
			The elements of the finite sequence $S_{t,J}:=\{ \alpha_j\}_{j=1}^{N}$ are plainly seen to be uniformly distributed, in the sense that
			\begin{equation*}
				\lim\limits_{J\to\infty} \frac{1}{N_J} 
				\# \big\{ P\in \RR_J : \angle (PP_{t,J} O) \in [\alpha \Delta_{av}, \beta \Delta_{av}]\big\} 
				=\beta -\alpha ,\ \ \text{ for } 0\leq \alpha \leq \beta \leq 1.
			\end{equation*}
			
			The \emph{gap distribution function} associated to $S_{t,J}$ is defined as the proportion of angles greater than or equal to~$\lambda$ times the average, that is,
			\begin{equation*}
				G_{t,J}(\lambda) := \frac{1}{N_J}
				\#\big\{j\colon \alpha_{j+1}-\alpha_j \geq \lambda\Delta_{av}\big\},
				\ \ \text{ for } \lambda >0.
			\end{equation*}
			The aim of this paper is to show the existence of the limiting gap distribution function
			\begin{equation*}
				G_t(\lambda):=\lim\limits_{J\to\infty} G_{t,J} (\lambda),    
			\end{equation*}
			and to describe how to compute it.
			
			It is natural to ask what is the behavior of the gap distribution when the observer is located at the point $(-tJ^\alpha,0)$ for some fixed $\alpha >0$. The situation $\alpha >2$ corresponds to $t\rightarrow \infty$ in Theorem \ref{T:GLambdaClosedForm}, whence $G_t=\delta_0$.
			The situation $0<\alpha <2$ corresponds to $t\rightarrow 0$ in Theorems \ref{T:GLambdaAllt} and \ref{T:GLambdaAlltConstants}, and appears to be more challenging.

		\begin{figure}[ht]
			\centering    \includegraphics[angle=-90,width=0.89\textwidth]{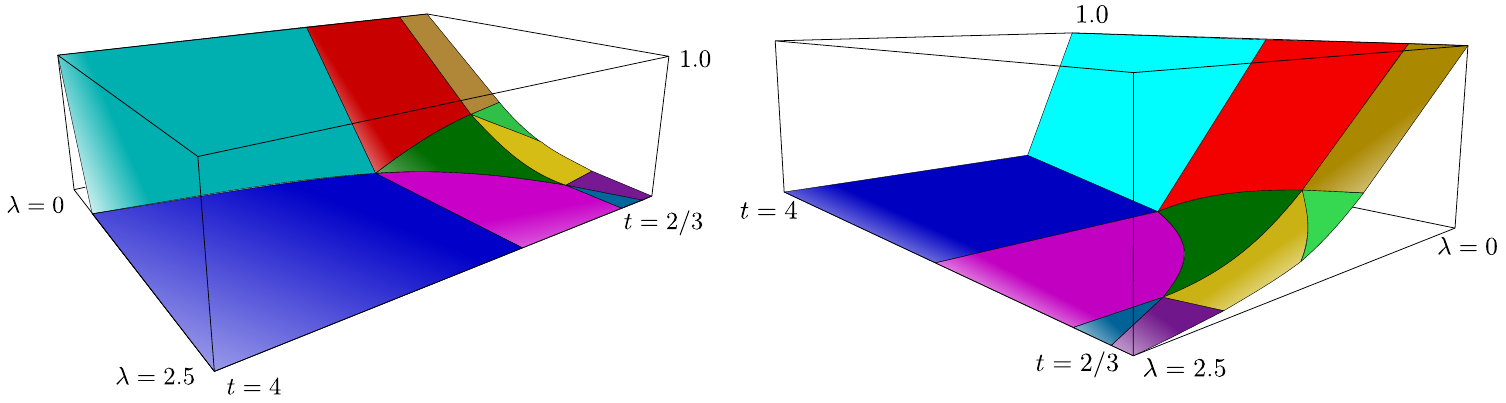}
			\caption{A $3$d representation of $G_t(\lambda)$, for $0\le \lambda\le 5/2$ and $2/3\le t\le 4$, seen from two different viewpoints.
				(The interested reader can further interact~\cite{Interact2023} with the graph of $G_t(\lambda)$ with the observer located in different positions.)
			}
			\label{FigureG3d}
		\end{figure}

		\section{Main results}
		We begin by calculating the explicit formula of $G_t(\lambda)$ in two cases. 
		First, when $t$ is large and the lattice points are seen in the natural order 
		when sweeping the horizon with rays sent at increasing angles towards the lattice points. 
		And then when $t$ is slightly smaller, and the first changes in the natural order 
		in which the target points are seen occur.
		\begin{theorem}\label{T:GLambdaClosedForm}
			Let $G_t(\lambda)$ be the gap distribution function defined above.
			\begin{enumerate}
				\item When $t>2$, we have
				\begin{equation}\label{eqG1infty}
					G_t(\lambda) = \begin{cases}
						1-\ldfrac{\lambda t}{2}, & \text{if } 0\leq\lambda\leq\ldfrac{2}{t}; \\[3pt]
						0, &\text{if } \ldfrac{2}{t}\leq \lambda.
					\end{cases}
				\end{equation}
				\item When $1<t\leq2$, we have
				\begin{equation}\label{eqG12}
					G_t(\lambda) = \begin{cases}
						1+\ldfrac{\lambda t}{2} + 
						\lambda t\log\big(\ldfrac{t}{2}\big) - \ldfrac{\lambda t^2}{2}, 
						&\text{if } 0\leq\lambda\leq 1; \\[4pt]
						1-\ldfrac{\lambda t}{2} + \lambda t\log\big(\ldfrac{\lambda t}{2}\big) - \ldfrac{\lambda t^2}{2}+t, 
						&\text{if } 1\leq\lambda\leq\ldfrac{2}{t}; \\[4pt]
						0, &\text{if } \ldfrac{2}{t}\leq \lambda.
					\end{cases}
				\end{equation}
			\end{enumerate}
		\end{theorem}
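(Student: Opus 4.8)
The plan is to follow each angle to its immediate successor in the sorted list and to count for how many lattice points the resulting gap reaches $\lambda\Delta_{av}$. First I would pass to a clean model: since the argument of the arctangent is $O(1/J)$, one has $\angle(PP_{t,J}O)=\tfrac{n}{tJ^2+m}+O(J^{-5})$ uniformly for $P=(m,n)\in\RR_J$, and for $n\ge1$ this main term dominates the error; together with $\Delta_{av}=\tfrac1{2tJ^3}\bigl(1+O(J^{-1})\bigr)$ this lets us study instead the differences of the rescaled quantities $\widetilde\alpha(m,n):=\bigl(\tfrac{n}{tJ^2+m}\bigr)/\Delta_{av}$. The $2J+1$ points with $n=0$ all have angle $0$ and create only $O(J)$ gaps, so they do not affect the limit. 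The structural observation is that the angles sharing a fixed value of $n$ form a \emph{block}: $2J+1$ values in arithmetic-progression-like position with rescaled consecutive spacing $s_n:=\tfrac{2n}{tJ}\bigl(1+O(J^{-1})\bigr)$. As sets of angles, blocks $n$ and $n+1$ overlap precisely when $\tfrac{2n+1}{tJ}>1$, i.e.\ essentially when $n>tJ/2$; and, because $t>1$, blocks $n$ and $n+2$ never overlap for $n\le J$. This is exactly the dichotomy of the statement: when $t>2$ no two blocks overlap, while when $1<t\le2$ the blocks with $n\gtrsim tJ/2$ overlap their two neighbours.

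In the case $t>2$ the argument is a direct count. With no overlaps, the successor of $(m,n)$ is $(m-1,n)$ for every point except the highest one in each block ($O(J)$ points altogether), and the associated rescaled gap equals $s_n$. Hence
\begin{equation*}
G_{t,J}(\lambda)=\tfrac1N\,\#\bigl\{(m,n):1\le n\le J,\ |m|\le J,\ \tfrac{2n}{tJ}\ge\lambda\bigl(1+O(J^{-1})\bigr)\bigr\}+O(J^{-1}),
\end{equation*}
and letting $J\to\infty$ gives $1-\tfrac{\lambda t}2$ for $0\le\lambda\le\tfrac2t$ and $0$ for $\lambda\ge\tfrac2t$, which is \eqref{eqG1infty}.

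For $1<t\le2$ I would analyse the overlapping blocks. Set $k:=tJ^2+m$ and $\delta:=\{k/n\}$ (fractional part). A short computation --- locating the lowest point of an adjacent block lying above $\widetilde\alpha(m,n)$ and checking that it precedes $(m-1,n)$ --- shows that the rescaled gap from $(m,n)$ to its successor equals $s_n$ if $P$ lies in the non-overlapped middle part of block $n$; equals $\delta\,s_n$ if $P$ lies in the part of block $n$ overlapped by block $n+1$; and equals $(1-\delta)\,s_n$ if $P$ lies in the part overlapped by block $n-1$. Moreover, once $n>tJ/2$ these three parts of block $n$ have sizes equal, up to $O(1)$, to $2J-\tfrac{tJ^2}{n}$, $\tfrac{2tJ^2}{n}-2J$, and $2J-\tfrac{tJ^2}{n}$ (and the whole of block $n$ is ``middle'' when $n\le tJ/2$, exactly as for $t>2$). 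Collecting these facts, the number of gaps $\ge\lambda\Delta_{av}$ equals, up to $O(J)$,
\begin{equation*}
\sum_{n=1}^{J}\Bigl(M_n\,\mathbf 1_{\,n\ge\lambda tJ/2}+\#\bigl\{k\in I_n^{+}:\{k/n\}\ge\tfrac{\lambda tJ}{2n}\bigr\}+\#\bigl\{k\in I_n^{-}:\{k/n\}\le1-\tfrac{\lambda tJ}{2n}\bigr\}\Bigr),
\end{equation*}
where $M_n$ is the middle-part size ($=2J+1$ for $n\le tJ/2$) and $I_n^{\pm}$ are intervals of $\approx2J-tJ^2/n$ consecutive integers (empty for $n\le tJ/2$).

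The hard part will be the two inner counts: each $I_n^{\pm}$ has length strictly less than $n$, so $\{k/n\}$ is \emph{not} equidistributed for an individual $n$, and the cancellation has to come from averaging over $n$ --- this is where the exponential sums enter. By the Erd\H{o}s--Tur\'an inequality the discrepancy of $(\{k/n\})_{k\in I_n^{\pm}}$ is controlled by $\tfrac{|I_n^{\pm}|}{H}+\sum_{1\le h\le H}\tfrac1h\bigl|\sum_{k\in I_n^{\pm}}e(hk/n)\bigr|$, and the geometric sums satisfy $\bigl|\sum_{k\in I_n^{\pm}}e(hk/n)\bigr|\ll\min\!\bigl(|I_n^{\pm}|,\|h/n\|^{-1}\bigr)$; summing over $n\le J$ --- isolating the $O(1)$ values of $n$ near each $h/j$ where $\|h/n\|$ is small --- should give a bound $O_\varepsilon(J^{1+\varepsilon})$ uniform in $1\le h\le H$, so that choosing $H$ a small power of $J$ makes the total error $o(N)$. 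It then remains to evaluate the limit of the resulting elementary sum: the ``middle'' contributions produce the harmonic sums $\sum 1/n$ responsible for the logarithmic terms $\lambda t\log(t/2)$, resp.\ $\lambda t\log(\lambda t/2)$, while the overlap main terms $2\sum_{n}\bigl(2J-\tfrac{tJ^2}{n}\bigr)\bigl(1-\tfrac{\lambda tJ}{2n}\bigr)$ supply the remaining polynomial terms in $\lambda$ and $t$; the two ranges $0\le\lambda\le1$ and $1\le\lambda\le\tfrac2t$ of \eqref{eqG12} appear according as the threshold $\lambda tJ/2$ falls below or above $tJ/2$, and one checks that the formula vanishes for $\lambda\ge2/t$ and is continuous at $\lambda=1$ and at $t=2$ with \eqref{eqG1infty}. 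The remaining bookkeeping --- the $O(J)$ exceptional points at block edges, the degeneracies with $n\mid k$ (vacuous unless $tJ^2\in\Z$), and the precise endpoints of $I_n^{\pm}$ --- is routine.
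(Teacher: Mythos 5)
Your geometric setup is the same as the paper's: blocks indexed by the height $n$, the observation that for $t>2$ blocks never overlap while for $1<t\le 2$ each block near the top overlaps only its immediate neighbours, the three-way split of each block into a middle part and two overlapped parts, and the identification of the gap as $\delta s_n$ or $(1-\delta)s_n$ with $\delta=\{(tJ^2+m)/n\}$. The case $t>2$ is correct and matches Section~\ref{S:tBiggerThan2}, and your final elementary evaluation of the main terms would indeed reproduce \eqref{eqG12}. The gap is in the analytic core of the case $1<t\le2$.

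You propose to control $\#\{k\in I_n^{\pm}:\{k/n\}\ge y_n\}$ by applying Erd\H{o}s--Tur\'an to the sequence $(\{k/n\})_{k\in I_n^{\pm}}$ for each fixed $n$, bounding the geometric sums by $\min(\lvert I_n^{\pm}\rvert,\lVert h/n\rVert^{-1})$, and then summing these bounds over $n$ to get $O_\varepsilon(J^{1+\varepsilon})$ uniformly in $h$. That bound is false, and the step where you ``isolate the $O(1)$ values of $n$ where $\lVert h/n\rVert$ is small'' is the error: in the relevant range $tJ/2\le n\le J$ one has $\lVert h/n\rVert = h/n \le H/n \ll H/J$ for \emph{every} $n$, so $\lVert h/n\rVert^{-1}\gg J/h$ for all $\asymp J$ values of $n$, not just $O(1)$ of them. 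Already for $h=1$ the geometric sum over an interval of $\asymp(2-t)J$ consecutive integers has magnitude $\asymp J$, so $\sum_n\bigl|\sum_k e(hk/n)\bigr|\asymp J^2$, and your total error is $O(J^2)=O(N)$ rather than $o(N)$. This is unavoidable with absolute values placed inside the sum over $n$, because for an individual $n$ the points $\{k/n\}$ genuinely fail to equidistribute (they form an arithmetic progression of step $1/n$ filling only part of $[0,1]$), so the individual count deviates from $\lvert I_n^{\pm}\rvert(1-y_n)$ by an amount of order $\lvert I_n^{\pm}\rvert$ depending on the starting phase. The cancellation you need lives entirely in that starting phase: writing the fixed-$n$ count as $n\,\mu(A_{R,S,y}(x_n))+O(1)$ with $x_n=\{tJ^2/n\}$, one must prove equidistribution of $x_n$ as $n$ varies over short windows, which reduces to bounding $\sum_n e(\ell tJ^2/n)$ --- an exponential sum with a genuinely curved phase, handled in the paper by van der Corput's Process B (Lemma~\ref{L:MontgomeryBoundExpSum}), yielding discrepancy $O(J^{2/3}/L+J^{1/2})$ on windows of length $J/L$. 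Your sketch never produces this sum, so the limit formula in \eqref{eqG12} is not established by the argument as written.
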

		
		\begin{figure}[ht]
			\centering    
			\includegraphics[angle=0,width=0.79\textwidth]{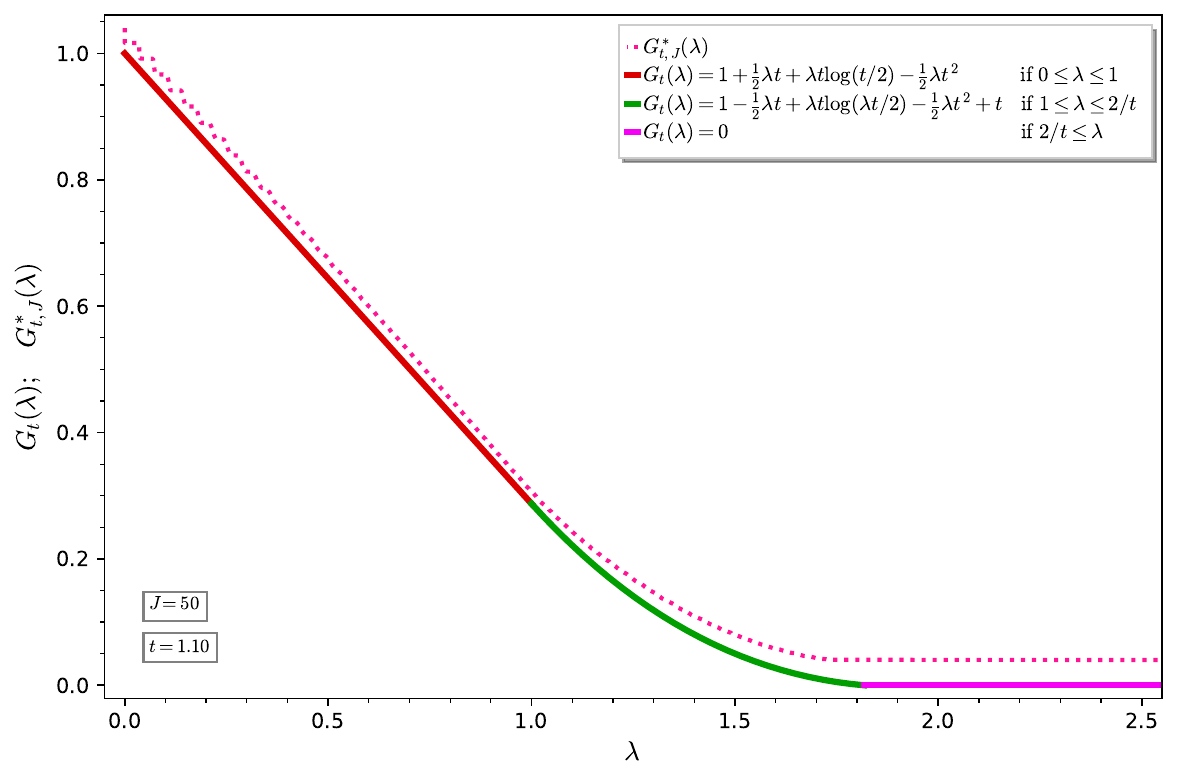}
			\vspace*{-10pt}
			\caption{The graph of the limit gap distribution function $G_t(\lambda)$ 
				compared with the approximation $G_{t,J}^*(\lambda)$ obtained from the
				point of view $(-tJ^2,0)$, with $t=1.1$ and $J=50$.
				The latter drawn with a dotted line is the same as $G_{t,J}(\lambda)$ slightly translated upwards 
				to make the small distinction visible.
			}
			\label{FigureGG1}
		\end{figure}
		The graph of $G_t(\lambda)$ is shown in Figure~\ref{FigureGG1} for $t=1.1$. 
		For a closer look at the target lattice points, with $t=0.82$, the graph is shown below in Figure~\ref{FigureGG2} 
		after a general discussion on the intervals where $G_t(\lambda)$ has different expressions. 
		In both graphs, one can notice that there is a nearly perfect match between the limit $G_t(\lambda)$ 
		and the partial gap distribution functions obtained for even relatively small-sized target squares. 
		In the next two theorems, we prove the existence of $G_t(\lambda)$ in all cases and 
		determine the general shape of the surface $(t, \lambda) \mapsto G_t(\lambda)$.
		A $3$D graphical representation of the generated surface can be seen in Figure~\ref{FigureG3d}.
		
		\begin{theorem}\label{T:GLambdaAllt}
			Let
			\begin{gather*}
				\begin{split}
					R_{k,r,w} &:= \max\Big\{\sdfrac{kt}{w^2}-\sdfrac{1}{w}, \sdfrac{1}{w}-\sdfrac{(r+1)t}{w^2}\Big\}, \\
					S_{k,r,w} &:= \min\Big\{\sdfrac{(k+1)t}{w^2}-\sdfrac{1}{w}, \sdfrac{1}{w}-\sdfrac{rt}{w^2}\Big\}, 
				\end{split} \\
				y_w := \frac{\lambda t}{2w}, \\
				A_{k,r,w}(x) := \Big\{z\in[R_{k,r,w}+x, S_{k,r,w}+x] \colon \min_{\substack{-k\leq j\leq r \\ j\neq 0}}\{jz\}\geq y_w\Big\}, \\
				\begin{split}
					M_{k,r}^- &:= \max\Big\{\sdfrac{\lambda t}{2},\ \sdfrac{(k+r)t}{2}\Big\}, \\
					M_{k,r}^+ &:= \min\Big\{1,\ \sdfrac{(k+r+2)t}{2}\Big\}, 
				\end{split} \\
				h := \Big\lfloor\frac{2}{t}\Big\rfloor.
			\end{gather*}
			Then, for all $t>0$,
			\begin{equation*}
				G_{t}(\lambda) = \frac{1}{2}\sum_{\substack{0\leq k,r\leq h \\ M_{k,r}^- \leq M_{k,r}^+}} \left(\int_{M_{k,r}^-}^{M_{k,r}^+} \int_0^1 w\mu\left(A_{k,r,w}(x)\right)\,\dv x\dv w\right),
			\end{equation*}
			where $\mu$ is the Lebesgue measure.
		\end{theorem}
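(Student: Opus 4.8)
The plan is to rescale the angles, determine the local arrangement of the rescaled values near a typical lattice point, reduce the event ``gap $\ge\lambda\Delta_{av}$'' to a fractional‑part inequality, and evaluate the resulting count by an equidistribution argument.

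\emph{Rescaling.} For $P=(a,b)$ one has $\angle(PP_{t,J}O)=\arctan\frac{b}{tJ^2+a}$, and expanding this together with $\alpha_{\max}=\arctan\frac{1}{tJ-1}$ gives
\[
\phi_{(a,b)}:=\frac{\angle(PP_{t,J}O)}{\Delta_{av}}=\frac{N_J(tJ-1)\,b}{tJ^2+a}+g\!\left(\tfrac bJ\right)+O\!\left(\tfrac1J\right),
\]
with $g$ a fixed smooth function on $[0,1]$. Since $g(b/J)$ changes by only $O(1/J)$ over any window of $O(1)$ consecutive rows, it does not influence gaps of size $O(\Delta_{av})$, so it suffices to analyse the gap distribution of the numbers $N_J(tJ-1)b/c$ with $c=tJ^2+a$, $1\le b\le J$, $-J\le a\le J$ (the row $b=0$ is negligible).

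\emph{Local structure and the gap criterion.} Fix a reference point $P'=(a',b')$ and put $w=b'/J$, $\xi=a'/J$, $c'=tJ^2+a'$. Expanding $\phi_{(a,b)}-\phi_{(a',b')}$ for $b=b'+j$, $c=c'+i$ shows: within a single row consecutive values differ by $\tfrac{2w}{t}+O(\tfrac1J)$; a row $b'+j$ has a point within $O(1)$ of $\phi_{(a',b')}$ exactly when that point has horizontal coordinate $\approx a'+jc'/b'\in[-J,J]$, i.e.\ for $-k\le j\le r$ with $k=\lfloor w(1+\xi)/t\rfloor$, $r=\lfloor w(1-\xi)/t\rfloor$ (so automatically $k+r\le h$); and for such $j\ne0$ the least value of row $b'+j$ lying above $\phi_{(a',b')}$ is $\phi_{(a',b')}+\tfrac{2w}{t}\{jc'/b'\}+O(\tfrac1J)$, while row $b'$ itself gives $\phi_{(a',b')}+\tfrac{2w}{t}+O(\tfrac1J)$. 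Hence the gap to the right of $P'$ equals $\tfrac{2w}{t}\min\!\big(1,\ \min_{-k\le j\le r,\,j\ne0}\{jc'/b'\}\big)+O(\tfrac1J)$, so for all but $o(N_J)$ lattice points (the exceptions arising from boundary and threshold effects) the gap is $\ge\lambda\Delta_{av}$ precisely when $w\ge\lambda t/2$ and $\{jc'/b'\}\ge y_w=\frac{\lambda t}{2w}$ for every $j\in\{-k,\dots,r\}\setminus\{0\}$ — which is exactly the condition defining $A_{k,r,w}$, with $c'/b'$ in the role of $z$.

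\emph{The count.} Therefore $N_JG_{t,J}(\lambda)=\#\{(a',b'):\mathrm{gap}\ge\lambda\Delta_{av}\}+o(N_J)$. Grouping the points by the configuration $(k,r)$ at $P'$ confines $(\xi,w)$ to the region on which $w\in[M^-_{k,r},M^+_{k,r}]$ while, for each such $w$, $\xi$ ranges over an interval of length $w\,(S_{k,r,w}-R_{k,r,w})$; indeed the solution set of $k=\lfloor w(1+\xi)/t\rfloor$, $r=\lfloor w(1-\xi)/t\rfloor$ in $\xi$, divided by $w$, is precisely $[R_{k,r,w},S_{k,r,w}]$, and its being empty off $[M^-_{k,r},M^+_{k,r}]$ is the condition $M^-_{k,r}\le M^+_{k,r}$. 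On this region the fractional parts $c'/b'\bmod 1$ become equidistributed in $[0,1)$, independently of the slowly varying pair $(\xi,w)$, so
\[
N_JG_{t,J}(\lambda)=(1+o(1))\,J^2\!\!\sum_{\substack{0\le k,r\le h\\ M^-_{k,r}\le M^+_{k,r}}}\ \int_{M^-_{k,r}}^{M^+_{k,r}}\! w\,(S_{k,r,w}-R_{k,r,w})\,P_{k,r}(w,\lambda)\,dw,
\]
where $P_{k,r}(w,\lambda)=\mu\{z\in[0,1):\{jz\}\ge y_w\ \forall\,j\in\{-k,\dots,r\}\setminus\{0\}\}$. A one‑line Fubini computation (substitute $z=u+x$ and use $1$‑periodicity of the set $\{z:\{jz\}\ge y_w\ \forall j\}$) yields $\int_0^1\mu(A_{k,r,w}(x))\,dx=(S_{k,r,w}-R_{k,r,w})\,P_{k,r}(w,\lambda)$, hence $w(S_{k,r,w}-R_{k,r,w})P_{k,r}(w,\lambda)=\int_0^1 w\,\mu(A_{k,r,w}(x))\,dx$; dividing by $N_J=2J^2+O(J)$ produces the factor $\tfrac12$ and the asserted formula (and, specializing, the closed forms of Theorem \ref{T:GLambdaClosedForm}).

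\emph{Main obstacle.} The essential input is the equidistribution of $\{(tJ^2+a')/b'\bmod 1\}$, uniformly over the curved, $t$‑ and $\lambda$‑dependent domains above and against the families of unions of $O(1)$ intervals $\{z:\{jz\}\ge y_w\ \forall j\}$. This comes from the Erd\H{o}s--Tur\'an inequality together with standard bounds for the exponential sums $\sum_{b'}e^{2\pi i\,m\,c'/b'}$ (with $c'=tJ^2+a'$ fixed), which exhibit square‑root‑type cancellation by van der Corput's method; the $w$‑dependence of $y_w$ and of $(k,r)$ is absorbed by first localising to short ranges of $b'$. The remaining points — uniformity of the $O(1/J)$ errors in the local analysis, and the bound $o(N_J)$ for the number of lattice points near $\partial\RR_J$ and near the thresholds $w\in\{\lambda t/2,\ \tfrac{(k+r)t}{2},\ \tfrac{(k+r+2)t}{2}\}$ — are routine.
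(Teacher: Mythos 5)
Your proposal is correct and follows essentially the same route as the paper: the same reduction of the gap condition to $\min_{-k\le j\le r,\,j\ne 0}\{j(tJ^2+a)/m\}\ge \lambda tJ/(2m)$, the same partition of the rectangle into $(k,r)$-interference regions giving the ranges $[R_{k,r,w},S_{k,r,w}]$ and $[M_{k,r}^-,M_{k,r}^+]$, and the same equidistribution machinery (Erd\H{o}s--Tur\'an plus van der Corput-type exponential sum bounds after localizing $m$ to short intervals), which is exactly the content of Lemmas~\ref{L:AlltInnerSum} and~\ref{L:AlltOuterSum}. Your Fubini identity $\int_0^1\mu(A_{k,r,w}(x))\,\dv x=(S_{k,r,w}-R_{k,r,w})P_{k,r}(w,\lambda)$ is the same computation the paper carries out in \eqref{E:IntegralEqualsIntervalMeasures}, just packaged more compactly.
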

		
		\begin{theorem}\label{T:GLambdaAlltConstants}
			There is a decreasing sequence of numbers $2=T_0 > T_1 > T_2 > \cdots$ converging to $0$, satisfying the following two properties:
			\begin{enumerate}
				\item For any positive integer $h$, $T_j\in[2/(h+1), 2/h]$ implies $T_j$ is a rational number with denominator at most $2h(h+1)$.
				\item For any interval $[T_j, T_{j+1}]$, there is a finite sequence 
				\begin{equation*}
					0 = \Lambda_{j,0}(t) \leq \Lambda_{j,1}(t) \leq \cdots 
					\leq \Lambda_{j,M_j}(t) \leq \Lambda_{j, M_j+1}(t) = 2/t,
				\end{equation*}
				where every inequality is valid for all $t\in[T_j, T_{j+1}]$.
			\end{enumerate}
			For any pair of intervals $[T_{j}, T_{j+1}]$ and $[\Lambda_{j,i}(t), \Lambda_{j,i+1}(t)]$, there exist constants $\kappa_{j,i,k}$ ($k\in\{1,2,\dots,8\}$) such that, for any $t\in[T_{j}, T_{j+1}]$ and $\lambda\in[\Lambda_{j,i}(t), \Lambda_{j,i+1}(t)]$,
			\begin{align*}
				G_t(\lambda) &= \kappa_{j,i,1} + \kappa_{j,i,2}t + \kappa_{j,i,3}\lambda t + \kappa_{j,i,4}\lambda t^2 \\
				&\phantom{=}+ \kappa_{j,i,5}t\log(\lambda) + \kappa_{j,i,6}\lambda t\log(\lambda) \\
				&\phantom{=}+ \kappa_{j,i,7}t\log(t) + \kappa_{j,i,8}\lambda t\log(t).
			\end{align*}
		\end{theorem}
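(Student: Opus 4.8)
The plan is to start from the closed formula of Theorem~\ref{T:GLambdaAllt} and evaluate each of its summands explicitly, tracking how the result depends on $(t,\lambda)$. First we simplify the inner integral: fix $k,r,w$, set $L_{k,r,w}:=S_{k,r,w}-R_{k,r,w}\ge 0$, and let $B_{k,r,w}:=\{z:\min_{-k\le j\le r,\,j\ne 0}\{jz\}\ge y_w\}$, which is $1$-periodic in $z$. Since $A_{k,r,w}(x)=[R_{k,r,w}+x,\,S_{k,r,w}+x]\cap B_{k,r,w}$, Fubini gives
\[
\int_0^1\mu\bigl(A_{k,r,w}(x)\bigr)\,\dv x=\int_{\R}\mathbf{1}_{B_{k,r,w}}(z)\,\phi(z)\,\dv z,\qquad
\phi(z):=\int_0^1\mathbf{1}_{[R_{k,r,w},\,S_{k,r,w}]}(z-x)\,\dv x .
\]
Folding modulo $1$, with $\Psi$ the $1$-periodization of $\mathbf{1}_{[R_{k,r,w},\,S_{k,r,w}]}$, one has $\sum_{n\in\Z}\phi(z+n)=\int_0^1\Psi(z-x)\,\dv x=\int_0^1\Psi=L_{k,r,w}$ for every $z$ (the mean of a $1$-periodic function over a unit interval), hence
\[
\int_0^1\mu\bigl(A_{k,r,w}(x)\bigr)\,\dv x=L_{k,r,w}\,\rho_{k,r}(y_w),\qquad
\rho_{k,r}(y):=\mu\bigl(\{z\in[0,1]:\min_{-k\le j\le r,\,j\ne 0}\{jz\}\ge y\}\bigr),
\]
and therefore $G_t(\lambda)=\tfrac{1}{2}\sum_{(k,r)}\int_{M_{k,r}^-}^{M_{k,r}^+}w\,L_{k,r,w}\,\rho_{k,r}\bigl(\tfrac{\lambda t}{2w}\bigr)\,\dv w$, the sum running over the same pairs $(k,r)$ as in Theorem~\ref{T:GLambdaAllt}.

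Next we analyze the two factors. The numbers $S_{k,r,w},R_{k,r,w}$ are extrema of four functions of the shape $\alpha t/w^2+\beta/w$; a short computation shows $[R_{k,r,w},S_{k,r,w}]$ is nonempty precisely for $w\in[(k+r)t/2,\,(k+r+2)t/2]$, and that on this range $w\,L_{k,r,w}$ equals $2-(k+r)t/w$ for $w\le(k+r+1)t/2$ and $(k+r+2)t/w-2$ for $w\ge(k+r+1)t/2$ --- a numerical combination of $1$ and $1/w$ in either case. For $\rho_{k,r}$ we claim it is a continuous, \emph{piecewise-linear} function of $y$ on the interval where it is positive, with breakpoints at rationals whose numerators and denominators are controlled by $h$: the complement of $B_{k,r,w}$ in $[0,1]$ is a union of intervals aligned at the rationals $m/j$ with $1\le j\le\max(k,r)\le h$, and the lengths of all their pairwise and higher intersections are minima of affine functions of $y$, so inclusion--exclusion makes $\rho_{k,r}$ piecewise-linear, its breakpoints being the $y$'s at which two such aligned intervals become tangent. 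Linearity here is essential: a quadratic piece of $\rho_{k,r}$ would, after the $w$-integration, create a term proportional to $\lambda^2$, which is absent from the asserted shape of $G_t$.

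Then we carry out the $w$-integration. For fixed $(k,r)$ with $M_{k,r}^-\le M_{k,r}^+$, subdivide $[M_{k,r}^-,M_{k,r}^+]$ at the midpoint $(k+r+1)t/2$ and at the points $w=\lambda t/(2q)$, where $q$ runs over $\{1\}$ together with the breakpoints of $\rho_{k,r}$. On each resulting subinterval $[w_1,w_2]$ the integrand has the form $A+B/w+C/w^2$, where $A$ is numerical, $B$ a numerical combination of $t$ and $\lambda t$, and $C$ a numerical multiple of $\lambda t^2$ (read off by multiplying $w\,L_{k,r,w}$ by $\rho_{k,r}(\lambda t/(2w))=c_0+c_1\lambda t/(2w)$), so its integral is $A(w_2-w_1)+B\log(w_2/w_1)-C(1/w_2-1/w_1)$. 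Every endpoint lies in $\{(k+r+i)t/2:i=0,1,2\}\cup\{1\}\cup\{\lambda t/(2q)\}$, so $w_2-w_1$ is a numerical combination of $1,t,\lambda t$; $1/w_2-1/w_1$ a numerical combination of $1,1/t,1/(\lambda t)$, whence $C(1/w_2-1/w_1)$ contributes to $t,\lambda t,\lambda t^2$; and $\log(w_2/w_1)$, being the logarithm of a ratio of monomials in $t$ and $\lambda$, is a numerical combination of $1,\log t,\log\lambda$, whence $B\log(w_2/w_1)$ contributes to $t,\lambda t,t\log t,\lambda t\log t,t\log\lambda,\lambda t\log\lambda$. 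Summing over $(k,r)$ and over subintervals thus exhibits $G_t(\lambda)$ as a linear combination of exactly the eight functions $1,\,t,\,\lambda t,\,\lambda t^2,\,t\log t,\,\lambda t\log t,\,t\log\lambda,\,\lambda t\log\lambda$; as a check, specializing to $t>2$ (where only $(k,r)=(0,0)$ survives and $\rho_{0,0}\equiv 1$) should reproduce Theorem~\ref{T:GLambdaClosedForm}(1), and $1<t\le 2$ should reproduce Theorem~\ref{T:GLambdaClosedForm}(2).

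Finally we must produce the partition of parameter space on which those coefficients are literally constant, and this is the step I expect to be the main obstacle. All the combinatorial data feeding the previous step --- which $(k,r)$ contribute; whether $M_{k,r}^-$ is $\lambda t/2$ or $(k+r)t/2$ and $M_{k,r}^+$ is $1$ or $(k+r+2)t/2$; which of $(k+r+1)t/2$ and the points $\lambda t/(2q)$ lie in $[M_{k,r}^-,M_{k,r}^+]$ and in what order; and which linear piece of each $\rho_{k,r}$ is active on each subinterval --- is locally constant in $(t,\lambda)$ away from a finite union of curves: vertical lines $\lambda=c$ (with $c$ ranging over $k+r$, $(k+r)q$, $(k+r\pm1)q$, $(k+r+2)q$ and the breakpoints of the $\rho_{k,r}$, all rationals whose size is controlled by $h$) and hyperbolas $\lambda=2q/t$ (including the outer constraint $\lambda=2/t$). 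Inside a band where $h=\floor{2/t}$ is fixed, i.e.\ $t\in[2/(h+1),2/h]$, the hyperbolas furnish the functions $\Lambda_{j,i}(t)$ and the lines the constant ones; the additional breakpoints $T_j$ of the $t$-axis are the abscissae at which two such curves meet --- a line $\lambda=c$ crossing a hyperbola $\lambda=2q/t$ at $t=2q/c$ --- together with the band endpoint $2/h$, and a careful accounting of the admissible $c$ and $q$ yields the stated bound (denominator at most $2h(h+1)$). On each cell $\{t\in[T_j,T_{j+1}],\ \lambda\in[\Lambda_{j,i}(t),\Lambda_{j,i+1}(t)]\}$ the combinatorial data, and hence the coefficients of the eight functions, are genuine constants $\kappa_{j,i,1},\dots,\kappa_{j,i,8}$, which is the assertion. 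The real work lies in this last step --- checking that the switching locus is exactly this family of lines and hyperbolas and that their crossings respect the denominator bound --- with the piecewise-linearity of the $\rho_{k,r}$ from the second step the other point that needs (mild) care.
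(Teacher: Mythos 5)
Your proposal is correct and follows essentially the same route as the paper: your identity $\int_0^1\mu(A_{k,r,w}(x))\,\dv x=(S_{k,r,w}-R_{k,r,w})\,\rho_{k,r}(y_w)$ is exactly the paper's \eqref{E:IntegralEqualsIntervalMeasures} (your periodization argument is a slightly cleaner derivation than the paper's three-piece computation), the piecewise-affine structure of $\rho_{k,r}$ in $y$ with breakpoints of denominator at most $2h$ matches the paper's partition of $[0,1]$ at the rationals where the minimizing $j$ changes, and the $w$-integration over endpoints of the forms $\alpha$, $\alpha t$, $\alpha\lambda t$ and the final partition of the $(\lambda,t)$-plane into cells cut by lines $\lambda=c$, $t=c$ and hyperbolas $\lambda t=c$ are carried out exactly as in Section~\ref{SS:ShapeOfGtLambda}. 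The step you flag as the main remaining work --- verifying the switching locus and the $2h(h+1)$ denominator bound for the $T_j$ --- is treated at a comparable level of detail in the paper itself.
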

		
		Theorem~\ref{T:GLambdaAlltConstants} can be reformulated by using the partitions for $t$ and $\lambda$ to describe two-dimensional regions in the $(\lambda, t)$-plane. Furthermore, one can find relations between the constants in bordering regions.
		\begin{corollary}\label{C:GLambdaAlltConstants}
			We can split the first quadrant of the $(\lambda, t)$-plane into a countable number of regions $\cD_0, \cD_1, \cD_2, \dots$, such that in each region $\cD_i$ there are constants $\kappa_{i, k}$ ($k\in\{1,2,\dots,8\}$) such that, for any $(\lambda, t)\in\cD_i$,
			\begin{equation*}
				\begin{split}
					G_t(\lambda) 
					=& \kappa_{i,1} + \kappa_{i,2}t + \kappa_{i,3}\lambda t + \kappa_{i,4}\lambda t^2 \\
					& + \kappa_{i,5}t\log(\lambda) + \kappa_{i,6}\lambda t\log(\lambda) \\
					& + \kappa_{i,7}t\log(t) + \kappa_{i,8}\lambda t\log(t).
				\end{split}
			\end{equation*}
			Each region $\cD_i$ is bounded by lines of the form $\lambda=c$, $t=c$, or $\lambda t = c$ where the $c$ are rational numbers. Furthermore, we have the following relations between constants in neighboring regions. If $\cD_i$ and $\cD_j$ share a border along a line $t=c$, then
				\begin{equation} \label{E:FirstBoundaryRelation}  
					\begin{split}
						\kappa_{i,1} + \kappa_{i,2}c + \kappa_{i,7}c\log(c) 
						&= \kappa_{j,1} + \kappa_{j,2}c + \kappa_{j,7}c\log(c), \\
						\kappa_{i, 3} + \kappa_{i, 4}c + \kappa_{i, 8}\log(c) 
						&= \kappa_{j, 3} + \kappa_{j, 4}c + \kappa_{j, 8}\log(c), \\
						\kappa_{i,5} &= \kappa_{j,5}, \\
						\kappa_{i, 6} &= \kappa_{j,6}.
					\end{split}        
				\end{equation}
			If $\cD_i$ and $\cD_j$ share a border along a line $\lambda=c$, then
				\begin{equation} \label{E:MiddleBoundaryRelation}  
					\begin{split}
						\kappa_{i,1}& = \kappa_{j,1}, \\
						\kappa_{i,2} + \kappa_{i,3}c + \kappa_{i,5}\log(c) + \kappa_{i,6}c\log(c) 
						&= \kappa_{j,2} + \kappa_{j,3}c + \kappa_{j,5}\log(c) + \kappa_{j,6}c\log(c), \\
						\kappa_{i,4} &= \kappa_{j,4}, \\
						\kappa_{i,7} + \kappa_{i,8}c &= \kappa_{j,7} + \kappa_{j,8}c.
					\end{split}        
				\end{equation}
			If $\cD_i$ and $\cD_j$ share a border along a line $\lambda t=c$, then
			\begin{equation} \label{E:LastBoundaryRelation}
				\begin{split}
					\kappa_{i,1} + \kappa_{i,3}c + \kappa_{i,6}c\log(c) 
					&= \kappa_{j,1} + \kappa_{j,3}c + \kappa_{j,6}c\log(c), \\
					\kappa_{i,2} + \kappa_{i,4}c + \kappa_{i,5}\log(c) 
					&= \kappa_{j,2} + \kappa_{j,4}c + \kappa_{j,5}\log(c), \\
					\kappa_{i,7} - \kappa_{i,5} 
					&= \kappa_{j,7} - \kappa_{j,5}, \\
					\kappa_{i,8} - \kappa_{i,6} 
					&= \kappa_{j,8} - \kappa_{j,6}.
				\end{split}        
			\end{equation}
		\end{corollary}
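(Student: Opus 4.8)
The plan is to deduce Corollary~\ref{C:GLambdaAlltConstants} from Theorem~\ref{T:GLambdaAlltConstants} by reorganising its $t$- and $\lambda$-partitions into a partition of the first quadrant by straight lines, and then to obtain \eqref{E:FirstBoundaryRelation}--\eqref{E:LastBoundaryRelation} from the continuity of $(\lambda,t)\mapsto G_t(\lambda)$ together with a linear independence argument. For the regions: Theorem~\ref{T:GLambdaAlltConstants} provides the vertical lines $t=T_j$ with $T_j$ rational and, inside each strip $T_j\le t\le T_{j+1}$, the curves $\lambda=\Lambda_{j,i}(t)$ cutting the strip into curvilinear boxes on each of which $G_t(\lambda)$ equals a single $8$-term expression. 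I would then let $\cD_0,\cD_1,\dots$ be the connected components of the open first quadrant after deleting the (countably many) lines $t=c$, $\lambda=c$, $\lambda t=c$ described below; each $\cD_i$ lies in one box, so the desired formula holds on $\cD_i$ with a fixed octuple of constants, which we denote $\kappa_{i,k}$. For $t>2$ one uses Theorem~\ref{T:GLambdaClosedForm}(1) instead, which gives directly the two regions $\{t>2,\ \lambda t<2\}$ and $\{t>2,\ \lambda t>2\}$.

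The point I expect to be the main obstacle is to show that each boundary curve $\lambda=\Lambda_{j,i}(t)$ is, after finitely many extra rational subdivisions $t=c$ of the strip, a segment of a line $\lambda=c$ or $\lambda t=c$ with $c\in\mathbb Q$. I would verify this by following how breakpoints arise in Theorem~\ref{T:GLambdaAllt}. The limits $M^{\pm}_{k,r}$ are the $\min$ or $\max$ of $\lambda t/2,\ (k+r)t/2,\ 1,\ (k+r+2)t/2$, so equating any two of them is already of one of the three allowed forms. The remaining breakpoints are the values of $\lambda$ at which the closed form of $\int_{M^-_{k,r}}^{M^+_{k,r}}\!\!\int_0^1 w\,\mu(A_{k,r,w}(x))\,\dv x\,\dv w$ changes, i.e. at which the relative order of the finitely many internal transition values of $w$ for the piecewise function $w\,\mu(A_{k,r,w}(x))$ and of the endpoints $M^{\pm}_{k,r}$ changes; those internal transition values are governed by the positions of the sets $\{z:\{jz\}<y_w\}$, whose endpoints are rationals and rationals shifted by $y_w/|j|=\lambda t/(2|j|w)$. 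On a locus where such a transition value meets an endpoint $M^{\pm}_{k,r}$, substituting for $w$ the value $M^{\pm}_{k,r}$ — which is either $1$, a rational multiple of $t$, or a rational multiple of $\lambda t$ — turns $y_w$ into a rational multiple of $\lambda t$, of $\lambda$, or of $1$ respectively, so the locus becomes $\lambda t=c$, $\lambda=c$, or $t=c$ with $c$ rational; loci where two internal transition values coincide reduce the same way. Since only finitely many $(k,r)$ have $M^-_{k,r}\le M^+_{k,r}$ and each $\mu(A_{k,r,w}(x))$ has finitely many pieces, this yields finitely many lines per strip.

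I would then record that $G_t(\lambda)$ is continuous on the open first quadrant. In the formula of Theorem~\ref{T:GLambdaAllt}, every summand is continuous in $(\lambda,t)$: the integrand $w\,\mu(A_{k,r,w}(x))$ is bounded and continuous in $(\lambda,t,w,x)$ — the measure of $\{z\in[a,b]:\min_{j}\{jz\}\ge y\}$ is continuous in $a,b,y$, and $R_{k,r,w},S_{k,r,w},y_w$ are continuous in $(\lambda,t,w)$ — while the limits $M^{\pm}_{k,r}$ vary continuously, so dominated convergence gives continuity of each integral. A summand that leaves the sum when $h=\lfloor 2/t\rfloor$ decreases, or when $M^-_{k,r}$ meets $M^+_{k,r}$, has vanishing contribution at the transition: at $t=2/h$ the terms with $k=h$ or $r=h$ satisfy $M^-_{h,r}\ge 1\ge M^+_{h,r}$, so the $w$-integral is over an interval of length $0$. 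Hence $G_t(\lambda)$ is continuous.

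Finally, the boundary identities. If $\cD_i$ and $\cD_j$ meet along a segment of the line $t=c$ (with $c>0$ rational), then by continuity the two $8$-term expressions agree for all $\lambda$ in a subinterval; setting $t=c$, each side becomes a linear combination of $1,\lambda,\log\lambda,\lambda\log\lambda$ with coefficients built from the $\kappa$'s and from $c,\log c$, and since those four functions are linearly independent on any interval, equating coefficients (and dividing the coefficient of $\lambda$ by $c\neq0$) yields \eqref{E:FirstBoundaryRelation}. If the common border lies on $\lambda=c$, set $\lambda=c$ and equate coefficients of $1,t,t^2,t\log t$ to get \eqref{E:MiddleBoundaryRelation}. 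If it lies on $\lambda t=c$, parametrise by $t=s$, $\lambda=c/s$, use $t\log\lambda=s\log c-s\log s$, $\lambda t\log\lambda=c\log c-c\log s$, $t\log t=s\log s$, $\lambda t\log t=c\log s$, and equate coefficients of $1,s,\log s,s\log s$ to get \eqref{E:LastBoundaryRelation}. This completes the proof.
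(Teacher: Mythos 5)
Your proposal is correct and follows the route the paper intends: the paper offers no explicit proof of this corollary, presenting it as a reformulation of Theorem~\ref{T:GLambdaAlltConstants} (whose proof in Section~\ref{SS:ShapeOfGtLambda} already shows that each $\Lambda_i(t)$ is either a rational $Q_i$ or $2Q_i/t$, and that the $T_{h,k}$ are rational, which is exactly your description of the boundaries as $\lambda=c$, $t=c$, $\lambda t=c$), with the boundary relations left to the reader. Your continuity argument (including the check that terms entering or leaving the sum at $t=2/h$ or at $M^-_{k,r}=M^+_{k,r}$ contribute nothing at the transition) and the coefficient-matching via linear independence of $1,\lambda,\log\lambda,\lambda\log\lambda$ (resp.\ $1,t,t^2,t\log t$ and $1,s,\log s,s\log s$) are exactly the missing details, and your computations reproduce \eqref{E:FirstBoundaryRelation}--\eqref{E:LastBoundaryRelation} correctly.
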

		
		\begin{table}[ht]
			\setlength{\tabcolsep}{5pt}
			\renewcommand{\arraystretch}{1.15} 
			\centering
			\caption{Definition of constants $\kappa_{i,j}$ for $0\le i\le 7$ and $1\le j\le 8$.}
			\begin{tabular}{c@{\hskip 2.5em}cccccccc}
				\toprule
				$i$ & $\kappa_{i,1}$ & $\kappa_{i,2}$ & $\kappa_{i,3}$ & $\kappa_{i,4}$ & $\kappa_{i,5}$ & $\kappa_{i,6}$ & $\kappa_{i,7}$ & $\kappa_{i,8}$ \\
				\midrule
				$0$ & $0$ & $0$ & $0$ & $0$ & $0$ & $0$ & $0$ & $0$ \\
				$1$ & $1$ & $0$ & $-1/2$ & $0$ & $0$ & $0$ & $0$ & $0$ \\
				$2$ & $1$ & $0$ & 1/2 - $\log(2)$ & -1/2 & 0 & 0 & 0 & 1 \\
				$3$ & $1$ & $1$ & -1/2 - $\log(2)$ & -1/2 & 0 & 1 & 0 & 1 \\
				$4$ & $1$ & $0$ & 1 - $\log(2)$ & -1 & 0 & 0 & 0 & 3/2 \\
				$5$ & $1$ & $5$ & -4 - $\log(2)$ & -1 & 2 & 3 & 0 & 3/2 \\
				$6$ & $-1$ & $3$ & -2 - $\log(2)$ & 1 & 0 & 1 & -2 & -1/2 \\
				$7$ & $-1$ & $-2$ + $2\log(2)$ & $1/2$ + $\log(2)/2$ & $1$ & $-2$ & $-1/2$ & $-2$ & $-1/2$\\
				\bottomrule
			\end{tabular}
			\label{TableKi}
		\end{table}
		
		One can use Theorem~\ref{T:GLambdaAllt} to calculate explicit formulas for $G_t(\lambda)$ in any case $2/(h+1) < t \leq 2/h$ where $h$ is a non-negative integer, with cases for higher values of $h$ taking longer to calculate than those for lower values of $h$. (Those readers interested in calculating further cases will find the proof of Lemma~\ref{L:AlltOuterSum} useful, in particular \eqref{E:IntegralEqualsIntervalMeasures}.) Here we give the next case, that is when $h=2$.
		\begin{corollary}\label{C:GLambda23t1}
			When $2/3<t\leq1$,
			\begin{equation}\label{eqG2pe31}
				G_t(\lambda) =
				\begin{cases}
					1+\lambda t - \lambda t^2 + \ldfrac{3}{2}\lambda t\log(t) - \lambda t\log(2), & \text{if } 0\leq \lambda \leq 1; \\[4pt]
					1 + 5t - 4\lambda t - \lambda t^2 + t(2+3\lambda)\log(\lambda) + \ldfrac{3}{2}\lambda t \log(t) - \lambda t \log(2), & \text{if } 1\leq\lambda\leq\ldfrac{1}{t}; \\[4pt]
					-1 + 3t - 2\lambda t + \lambda t^2 + \lambda t\log\big(\ldfrac{\lambda}{2}\big) - t\left(2+\ldfrac{1}{2}\lambda\right) \log(t), &\text{if } \ldfrac{1}{t}\leq\lambda\leq 2; \\[4pt]
					-1 - 2t + \ldfrac{1}{2}\lambda t + \lambda t^2 - t\left(2+\ldfrac{1}{2}\lambda\right)\log\big(\ldfrac{\lambda t}{2}\big), &\text{if } 2\leq\lambda\leq\ldfrac{2}{t}; \\[4pt]
					0, &\text{if } \ldfrac{2}{t} \leq \lambda.
				\end{cases}
			\end{equation}
		\end{corollary}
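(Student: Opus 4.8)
\textbf{Proof strategy for Corollary~\ref{C:GLambda23t1}.} The plan is to specialise Theorem~\ref{T:GLambdaAllt} to the range $2/3<t\le 1$, which is exactly the condition $h=\lfloor 2/t\rfloor=2$, and then evaluate the resulting finite sum of integrals in closed form, following the reduction from the proof of Lemma~\ref{L:AlltOuterSum}. First I would restrict the outer sum: since $M_{k,r}^-\ge (k+r)t/2$ and $M_{k,r}^+\le 1$, the pair $(k,r)$ contributes only if $(k+r)t/2\le 1$, i.e.\ $k+r\le 2/t<3$, hence $k+r\le 2$; so only the six pairs $(0,0)$, $(1,0)$, $(0,1)$, $(2,0)$, $(0,2)$, $(1,1)$ occur. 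Moreover $z\mapsto -z$ together with $x\mapsto -x$ and $1$-periodicity in $x$ gives $-S_{k,r,w}=R_{r,k,w}$, $-R_{k,r,w}=S_{r,k,w}$, and invariance of the constraint $\min_j\{jz\}\ge y_w$ under $k\leftrightarrow r$, so $(k,r)$ and $(r,k)$ yield equal contributions; it suffices to treat $(0,0)$, $(1,0)$, $(2,0)$, $(1,1)$ and double the contributions of $(1,0)$ and $(2,0)$ (the pair $(1,1)$ being self-symmetric).

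For each retained pair I would write down $R_{k,r,w}$, $S_{k,r,w}$, $M_{k,r}^{\pm}$ and $y_w=\lambda t/(2w)$ explicitly, set $\ell_{k,r}(w):=S_{k,r,w}-R_{k,r,w}$, and use~\eqref{E:IntegralEqualsIntervalMeasures} to turn $\int_0^1 w\,\mu(A_{k,r,w}(x))\,\dv x$ into a combination of lengths of explicit subintervals of the variable $z$: $A_{k,r,w}(x)$ is the interval $[R_{k,r,w}+x,\,S_{k,r,w}+x]$ of length $\ell_{k,r}(w)$ with the bad neighbourhoods of the points $m/j$ (those $z$ with $\{jz\}<y_w$) deleted, and averaging over $x\in[0,1]$ turns $\mu(A_{k,r,w}(x))$ into an elementary function of $\ell_{k,r}(w)$ and $y_w$ whose precise form depends on whether $y_w\ge\ell_{k,r}(w)$, on how $\ell_{k,r}(w)$ compares with $1/|j|$, and (for $k+r=2$) on whether the bad neighbourhoods from $|j|=1$ and $|j|=2$ overlap. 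Integrating these piecewise-elementary integrands against $w$ over $[M_{k,r}^-,M_{k,r}^+]$ requires only antiderivatives of $w^a$ and $w^a\log w$, and produces exactly the eight basis functions $1$, $t$, $\lambda t$, $\lambda t^2$, $t\log\lambda$, $\lambda t\log\lambda$, $t\log t$, $\lambda t\log t$ of Theorem~\ref{T:GLambdaAlltConstants}. The values $w=t/2$, $w=t$, $w=\lambda t/2$, $w=\lambda t$ at which the integrands change form, combined with the endpoint constraints $M_{k,r}^-\le w\le M_{k,r}^+$ and the requirement $y_w\le 1/2$ needed for the two-sided constraint of the pair $(1,1)$ to be satisfiable, are exactly what produce the $\lambda$-thresholds $\lambda=1$, $\lambda=1/t$, $\lambda=2$ and $\lambda=2/t$.

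Summing the contributions of the six pairs and collecting like terms on each of the ranges $[0,1]$, $[1,1/t]$, $[1/t,2]$, $[2,2/t]$, $[2/t,\infty)$ yields~\eqref{eqG2pe31}. As consistency checks I would verify continuity of $G_t$ at the four interior breakpoints---equivalently, that the constant tuples read off from the five pieces obey the boundary relations~\eqref{E:MiddleBoundaryRelation} at $\lambda=1,2$ and~\eqref{E:LastBoundaryRelation} at $\lambda t=1,2$---as well as $G_t(0^+)=1$, $G_t(2/t)=0$, and agreement with the $t\uparrow 1$ limit of the $h=1$ formula~\eqref{eqG12}; these five pieces also reproduce rows $i=3,5,6,7$ of Table~\ref{TableKi}.

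\textbf{Main obstacle.} The genuinely delicate part is the pairs with $k+r=2$, namely $(1,1)$, $(2,0)$, $(0,2)$: there $\mu(A_{k,r,w}(x))$ is governed by two or three simultaneous constraints $\{jz\}\ge y_w$ with $|j|\in\{1,2\}$ whose deleted neighbourhoods may or may not overlap, and one must carefully track how $\ell_{k,r}(w)$, $y_w$ and the spacing $1/2$ of the $|j|=2$ obstructions interleave as $w$ ranges over $[M_{k,r}^-,M_{k,r}^+]$. This bookkeeping is precisely what generates the logarithmic terms and the intermediate threshold $\lambda=1/t$; by comparison the pairs $(0,0)$ and $(1,0)$/$(0,1)$ are routine.
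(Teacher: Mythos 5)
Your plan is correct and is exactly the route the paper intends: the paper gives no separate proof of this corollary beyond pointing to Theorem~\ref{T:GLambdaAllt} and \eqref{E:IntegralEqualsIntervalMeasures}, and your identification of the six contributing pairs, the $(k,r)\leftrightarrow(r,k)$ symmetry, and the origin of the breakpoints $\lambda=1,\,1/t,\,2,\,2/t$ are all sound. One simplification you can exploit: \eqref{E:IntegralEqualsIntervalMeasures} shows the $x$-average equals exactly $(S_{k,r,w}-R_{k,r,w})\cdot\mu\big(\{z\in[0,1]:\min_j\{jz\}\ge y_w\}\big)$, so the case distinctions you anticipate involving $\ell_{k,r}(w)$ versus $y_w$ or $1/|j|$ never arise; only the overlap question $y_w\lessgtr 1/2$ for the pairs with $k+r=2$ survives.
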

		
		\begin{figure}[ht]
			\centering    
			\includegraphics[angle=0,width=0.79\textwidth]{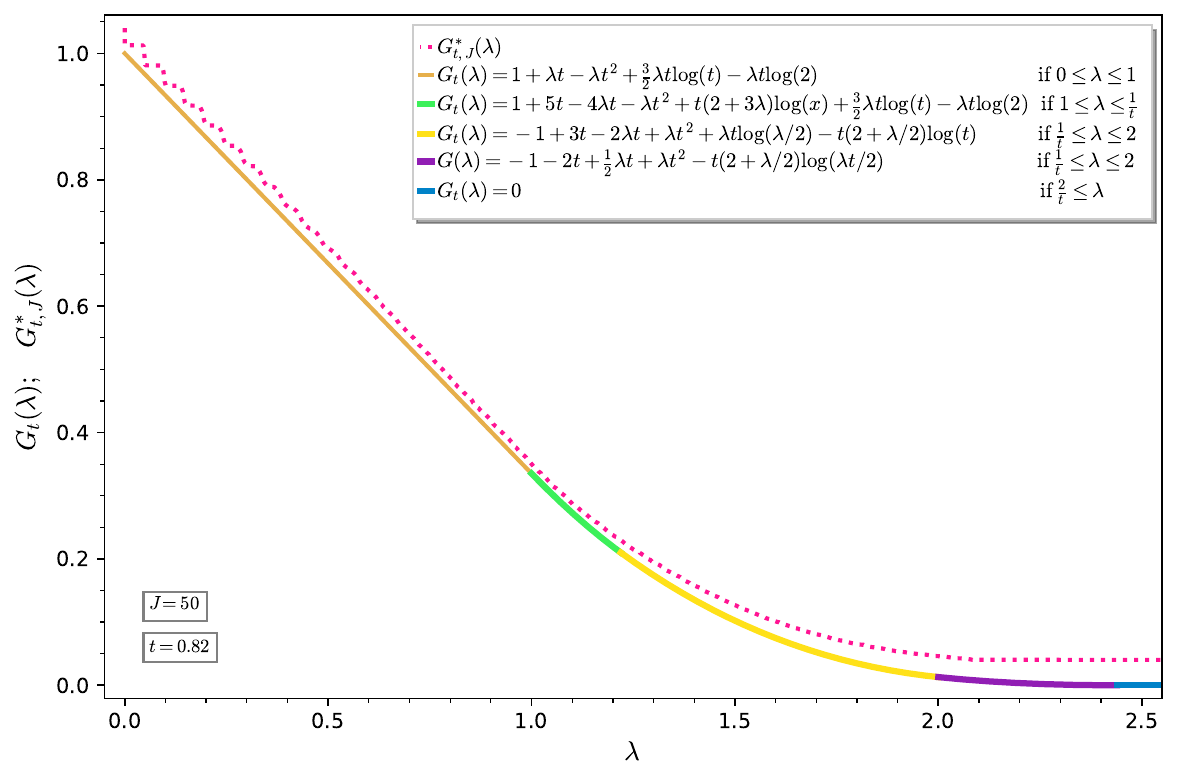}
			\vspace*{-10pt}
			\caption{The graph of the limit gap distribution function $G_t(\lambda)$ 
				compared with the partial one $G_{t,J}^*(\lambda)$ obtained from the
				point of view $(-tJ^2,0)$, with $t=0.82$ and $J=50$.
				The latter drawn with a dotted line coincides with $G_{t,J}(\lambda)$ except that it 
				is slightly translated upwards to make the distinction visible.
			}
			\label{FigureGG2}
		\end{figure}
		
		From Theorem~\ref{T:GLambdaClosedForm} and Corollary~\ref{C:GLambda23t1}, we can deduce the first few regions $\cD_i$ described in Corollary~\ref{C:GLambdaAlltConstants}.
		Their graphic representation is shown in Figure~\ref{FigureDomainsD0-7} and their precise definitions are: 
		\begin{align}\label{eqDomainsD0-7}
			\begin{aligned}
				\cD_0 &= \big\{(\lambda, t) : 2 \leq \lambda t \big\},\\
				\cD_2 &= \big\{(\lambda, t) : 1 \leq t \leq 2,\ 0 \leq \lambda \leq 1\big\}, \\
				\cD_4 &= \big\{(\lambda, t) : 2/3\leq t\leq 1,\ 0\leq \lambda\leq 1\big\},\\
				\cD_6 &= \big\{(\lambda, t) : 2/3 \leq t\leq 1,\ 1\leq \lambda t,\ \lambda \leq 2\big\},
			\end{aligned}
			&&
			\begin{aligned}
				\cD_1 &= \big\{(\lambda, t) : 2\leq t,\ 0 \leq \lambda t \leq 2\big\},\\
				\cD_3 &= \big\{(\lambda, t) : 1 \leq t \leq 2,\ 1\leq \lambda,\ \lambda t \leq 2\big\}, \\
				\cD_5 &= \big\{(\lambda, t) : 2/3\leq t\leq 1,\ 1\leq \lambda,\ \lambda t\leq 1\big\},\\
				\cD_7 &= \big\{(\lambda, t) : 2/3 \leq t\leq 1,\  2\leq \lambda,\ \lambda t \leq 2\big\}.
			\end{aligned}
		\end{align}
		
		Let us group the constants $\kappa_{i,1}$ into tuples of eight each, 
		which we denote by
		$K_i = (\kappa_{i,1}, \kappa_{i,2},\dots, \kappa_{i,8})$  
		for $0\le i\le 7$. 
		The explicit values of these constants are given in Table~\ref{TableKi}.
		One can directly check that these constants do indeed satisfy relations~\eqref{E:FirstBoundaryRelation},~\eqref{E:MiddleBoundaryRelation} and~\eqref{E:LastBoundaryRelation}.
		\begin{figure}[ht]
			\centering    
			\includegraphics[angle=0,width=0.64\textwidth]{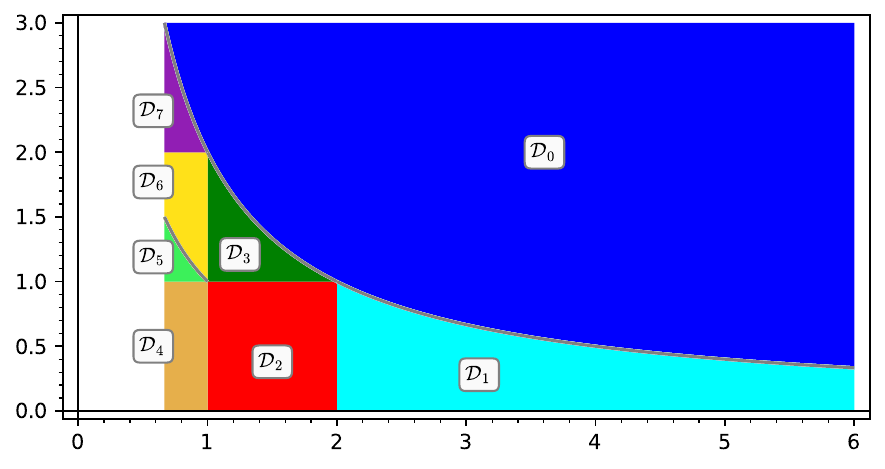}
			\vspace*{-10pt}
			\caption{The domains $\cD_0,\cD_1,\dots,\cD_7$ defined by~\eqref{eqDomainsD0-7}. 
			} \label{FigureDomainsD0-7}
		\end{figure}

		Formulas~\eqref{eqG1infty}, \eqref{eqG12} and~\eqref{eqG2pe31} of the gap distribution function allow us to obtain the corresponding density function for each of the intervals $(2,\infty)$, $(1,2]$ and $(2/3,1]$.
		We will discuss this aspect and provide explicit formulas and graphical representations of the density in Section~\ref{SectionDensities}.

		\subsection{The density of the gap distribution}\label{SectionDensities}
		Having the explicit formulas for the gap distribution function $G_t(\lambda)$ 
		for $t$ in the intervals $(2,\infty)$, $(1,2]$ and $(2/3,1]$, 
		we can now deduce the corresponding density function $g_t(\lambda)$ on these intervals.
		We define $g_t(\lambda)$  as the limit
		\begin{equation*}
			g_t(\lambda) = \lim_{\delta \searrow 0}\frac{1}{2\delta}\lim_{J\to\infty}
			\frac{1}{2J^2}\# \Big\{ \frac{\angle\big((a,m)P_{t,J}\widetilde{(a,m)}\big)}{\Delta_{av}}\in [\lambda-\delta,\lambda+\delta) \Big\}\,.
		\end{equation*}
		One sees that the inner quantity above closely resembles the definition of $G_{t,J}(\lambda)$. Thus we have,
		\begin{equation*}
			g_t(\lambda) 
			= \lim_{\delta \searrow 0}\frac{1}{2\delta} 
			\lim_{J\to\infty}
			\big(G_{t,J}(t-\lambda)-G_{t,J}(t+\lambda)\big)
			= \lim_{\delta \searrow 0}\frac{1}{2\delta} \big( G_t(\lambda-\delta) - G_t(\lambda+\delta) \big).
		\end{equation*}
		Further, this equals
		\begin{equation*}
			g_t(\lambda) 
			= \frac 12 \lim_{\delta \searrow 0}
			\left(\frac{G_t(\lambda-\delta)-G_{t}(\lambda)}{\delta} 
			- \frac{G_t(\lambda+\delta)-G_{t}(\lambda)}{\delta}
			\right)
			= - \frac 12\left(
			\frac{\partial}{\partial\lambda}G_t(\lambda^{-})
			+ \frac{\partial}{\partial\lambda}G_t(\lambda^{+})
			\right).
		\end{equation*}
		Hence, except at the boundary of the domains on which the limit exists, we have $g_t(\lambda)=-\frac{\partial}{\partial \lambda}G_t(\lambda)$.
		At the endpoints, we see that the values of the density will be the average of the left 
		and right derivatives at those points.
		Also, for any $t>0$ and any $\lambda\geq 0$, we have
		\begin{equation*}
			G_t(\lambda)=\int_\lambda^\infty g_t(u)\,\dv u\,.
		\end{equation*}
		
		A 3D representation of the limit density function $g_t(\lambda)$ can be seen in Figure~\ref{Figureg3d}.
		Note the rupture on the surface along
		the curve $\lambda=2/t$ for $t>1$ 
		and the range where the density vanishes.
		
		\begin{figure}[htb]
			\centering    \includegraphics[angle=-90,width=0.89\textwidth]{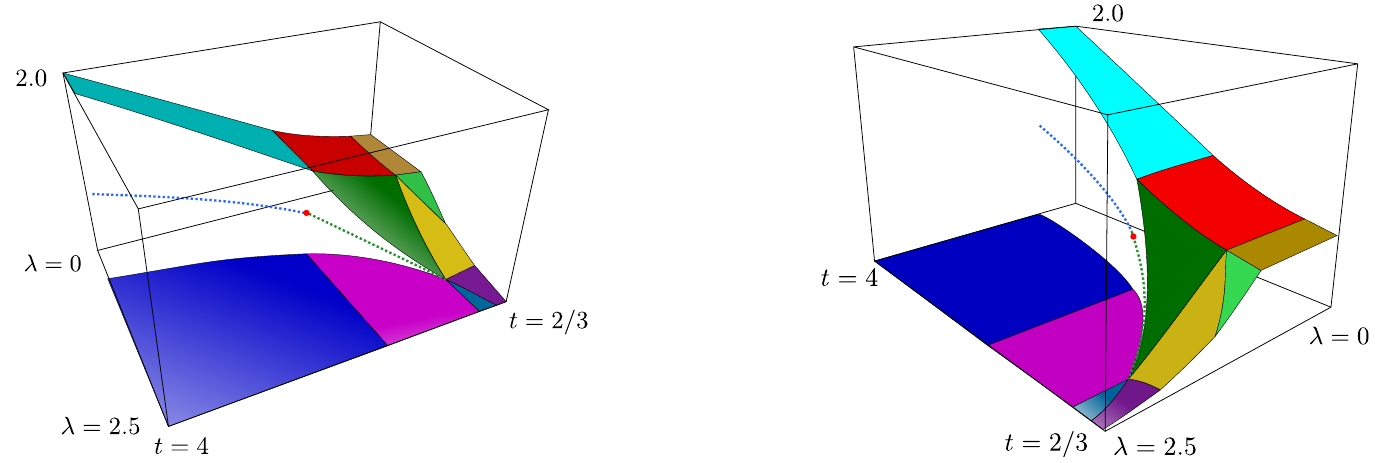}
			\caption{A $3$d representation of $g_t(\lambda)$, for $0\le \lambda\le 5/2$ and $2/3\le t\le 4$, seen from two different viewpoints.
			}
			\label{Figureg3d}
		\end{figure}
		\bigskip
		
		The exact expressions of the $g_t(\lambda)$ for $t > 2/3$ are as follows.
		1. If $2<t$,
		\begin{equation*}
			g_t(\lambda) = 
			\begin{cases}
				\ldfrac t2 & \text{ if $0<\lambda< \ldfrac 2t$,}\\[4pt]
				\ldfrac t4 & \text{ if $\lambda = \ldfrac 2t$,}\\[4pt]
				0 & \text{ if $ \ldfrac 2t <\lambda$.}
			\end{cases}
		\end{equation*}
		
		2. If $1<t\le 2$,
		\begin{equation*}
			g_t(\lambda) = 
			\begin{cases}
				\frac{t^2}{2} -\frac{t}2 -t\log\big(\frac{t}{2}\big) & \text{ if $0<\lambda< 1$,}\\[4pt]
				\frac{t^2}{2} -\frac{t}{2} -t\log\big(\frac{\lambda t}{2}\big) & \text{ if $1\le\lambda< \frac{2}{t}$,}\\[4pt]    
				\frac{t^2}{4} - \frac{t}{4} & \text{ if $\lambda = \frac{2}{t}$,}\\[4pt]
				0 & \text{ if $ \frac 2t <\lambda$.}
			\end{cases}
		\end{equation*}
		
		3. If $2/3<t\le 1$,
		\begin{equation*}
			g_t(\lambda) = 
			\begin{cases}
				t^2 -t -\frac{3}{2}t\log(t) +t\log(2)& \text{ if $0<\lambda< 1$,}\\[8pt]
				t^2 +t -\frac{2t}{\lambda} -3t\log(\lambda) -\frac{3t}{2}\log(t) +t\log(2) & \text{ if $1\le\lambda< \sdfrac 1t$,}\\[4pt]    
				-t^2 +t -t\log\big(\frac{\lambda}{2}\big) +\frac{t}{2}\log(t) & \text{ if $\frac 1t\le \lambda \le 2$,}\\[4pt]
				-t^2 +\frac{2t}{\lambda} +\frac{t}{2}\log\big(\frac{\lambda t}{2}\big) & \text{ if $2\le \lambda \le \frac 2t$,}\\[4pt]
				0 & \text{ if $ \frac 2t <\lambda$.}
			\end{cases}
		\end{equation*}
		
		Furthermore, let $\cD_0$, $\cD_1$, $\cD_2,\dots$, be the regions described in Corollary~\ref{C:GLambdaAlltConstants}. Then, for any region $\Int(\cD_j)$ (that is, the interior of $\cD_j$), we have constants $\alpha_{j,1}$, $\alpha_{j,2}$, $\alpha_{j,3}$, $\alpha_{j,4}$, $\alpha_{j,5}$ such that, for any $\lambda, t \in \Int(\cD_j)$,
		\begin{equation*}
			\begin{split}
				g_t(\lambda) &= \alpha_{j,1}t + \alpha_{j,2}t^2 
				+ \alpha_{j,3}\sdfrac{t}{\lambda} 
				+ \alpha_{j,4} t\log(\lambda) + \alpha_{j,5} t\log(t).
			\end{split}
		\end{equation*}
		In particular, we have
		\begin{align*}
			\hspace*{26mm}
			&&&&& 
			\hspace*{26mm}
			\\[-6mm]
			\alpha_{j,1} &= - \kappa_{j,3} - \kappa_{j,6}, &
			\alpha_{j,2} &= - \kappa_{j,4}, &
			\alpha_{j,3} &= - \kappa_{j, 5}, \\
			\alpha_{j,4} &= - \kappa_{j,6}, &
			\alpha_{j,5} &= - \kappa_{j,8}.
		\end{align*}
		
		\medskip
		
		From now on in this paper, without further specifications, 
		we will assume that all implied constants depend on 
		$\lambda$ and $t$, and so we will drop such subscripts,
		writing $O$ to implicitly mean $O_{\lambda, t}$, and, 
		likewise,~$\ll$ to mean $\ll_{\lambda, t}$. 
		Also, for two points $Q_1$ and $Q_2$, we may refer 
		to the angle $\angle (Q_1 P_{t,J} Q_2)$ 
		simply as the angle between $Q_1$ and~$Q_2$.
		
		The remainder of our manuscript is organized as follows: 
		In Section~\ref{S:tBiggerThan2} we will find a formula for $G_t(\lambda)$ in the simplest case, when $t>2$. In Section~\ref{S:tBetween1And2} we move onto the next case, that is, $1<t\leq 2$. This will introduce some ideas which lay out the blueprint for proving the existence of $G_t(\lambda)$ for all $t>0$, in particular, that of a notion which we call interference. We say that `a point $Q$ has interference' (or less specifically `we have interference') from a particular line if the point seen after it could potentially lie on that line. For example, in the simplest case $t>2$ we have no interference, whereas in the intermediate case $1<t\leq 2$ we begin to see interference from at most one line away; it is this interference which adds complexity to dealing with further cases beyond the first. Section~\ref{S:AllValuesOft} is dedicated mostly to proving the aforementioned existence of $G_t(\lambda)$ as given in Theorem~\ref{T:GLambdaAllt}. We then finish off the section by justifying the formula of $G_t(\lambda)$ described in Theorem~\ref{T:GLambdaAlltConstants}.
		
		Additional representations of the gap distribution function $G_t(\lambda)$ and its density $g_t(\lambda)$, with the additional possibility to interactively choose the observer's position or change the involved parameters, can be found by following the hyperlink in the reference~\cite{Interact2023}. 
		
		\section{\texorpdfstring{An explicit formula for $t>2$}{An explicit formula for t greater than 2}}\label{S:tBiggerThan2}
		
		For any $m\in\{0,\dots,J-1\}$, consider the line passing through the points $(-J,m)$ and $(J,m+1)$. This line will intersect the $x$-axis at
		\begin{equation*}
			x = -J(2m+1) \geq -2J^2-J.
		\end{equation*}
		So, for any fixed $t>2$, there is $J$ large enough such that $-tJ^2<-2J^2-J$, and thus the angle between $(-J,m)$ and the $x$-axis is smaller than the angle between $(J,m+1)$ and the $x$-axis. In other words, for any point $Q$ which does not lie on the left edge of the rectangle, the next point seen after it by the observer (which we shall denote by $\widehat{Q}$) will lie on the same line.
		
		For any two points $Q_1=(a_1, m)$ and $Q_2=(a_2, m)$ on the same line $y=m$, let $D=\abs{a_2-a_1}$ denote the distance between them. Let $\alpha$ be the angle between $Q_1$ and $Q_2$. Then, by equating two different formulas for the area of the triangle between $P_{t,J}$, $Q_1$, and $Q_2$, we find
		\begin{equation*}
			\sin(\alpha)\sqrt{(tJ^2+a_1)^2+m^2} \sqrt{(tJ^2+a_2)^2+m^2} = Dm.
		\end{equation*}
		Since, for any $-J\leq a\leq J$,
		\begin{equation*}
			\sqrt{(tJ^2+a)^2+m^2} 
			= tJ^2\left(1+O\Big(\sdfrac{1}{J}\Big)\right),
		\end{equation*}
		we find
		\begin{equation}
			\sin(\alpha) = \sdfrac{Dm}{t^2J^4}
			\left(1+O\Big(\sdfrac{1}{J}\Big)\right).
		\end{equation}
		To calculate $G_{t,J}(\lambda)$, we note that for all but one point on the line $y=m$, the angle between that point and the next point is
		\begin{equation*}
			\alpha = \frac{m}{t^2J^4}\left(1+O\Big(\sdfrac{1}{J}\Big)\right).
		\end{equation*}
		Therefore, using $\Delta_{av}$ as found in \eqref{E:AverageAngle},
		\begin{align*}
			G_{t,J}(\lambda) &= \frac{1}{2J^2+O(J)}\left[
			\#\Big\{(a,m)\in\cR_J 
			:
			\frac{m}{t^2J^4} +O\left(\sdfrac{1}{J^4}\right) 
			\geq \sdfrac{\lambda}{2tJ^3}+O\left(\sdfrac{1}{J^4}\right)\Big\} + O(J)\right] \\
			&=\frac{1}{2J^2+O(J)}\left[
			\#\Big\{(a,m)\in\cR_J 
			: 
			m\geq \sdfrac{\lambda tJ}{2} + O(1)\Big\} + O(J)\right] \\
			&=\frac{1}{2J^2+O(J)}\left[\#\left\{(a,m)\in\cR_J 
			:
			m\geq \frac{\lambda tJ}{2}\right\} + O(J)\right] \\
			&=\sdfrac{1}{J}\left(1+O\left(\fdfrac{1}{J}\right)\right)
			\#\Big\{m\in\Z 
			:
			\sdfrac{\lambda tJ}{2}\leq m \leq J\Big\} + O\left(\fdfrac{1}{J}\right).
		\end{align*}
		This essentially reduces to a formula in two cases, so we obtain
		\begin{equation*}
			\begin{split}
				G_{t,J}(\lambda) 
				&= \fdfrac{1}{J}
				\left(1+O\left(\ldfrac{1}{J}\right)\right)
				\begin{cases}
					J-\frac{\lambda tJ}{2} + O(1), &\text{if } \frac{\lambda tJ}{2} \leq J \\[4pt]
					0, &\text{if } \frac{\lambda tJ}{2} > J
				\end{cases} 
				\ + O\left(\fdfrac{1}{J}\right) \\
				&=\begin{cases}
					1-\frac{\lambda t}{2} + O\left(\frac{1}{J}\right), &\text{if } \lambda \leq \frac{2}{t}; \\[4pt]
					O\left(\frac{1}{J}\right), &\text{if } \lambda > \frac{2}{t}.
				\end{cases}
			\end{split}
		\end{equation*}
		Finally, by letting the side length $J$ of the box go to infinity, 
		we obtain the limit formula for $G_t(\lambda)$ when~$t> 2$:
		\begin{equation*}
			G_{t}(\lambda) := \lim_{J\to\infty} G_{t,J}(\lambda) 
			= \begin{cases}
				1-\frac{\lambda t}{2}, &\text{if } \lambda \leq \frac{2}{t}; 
				\\[4pt]
				0, &\text{if } \lambda > \frac{2}{t}.
			\end{cases}
		\end{equation*}
		
		\section{\texorpdfstring{An explicit formula for $1<t\leq2$}{An explicit formula for 1 less than t less than 2}}\label{S:tBetween1And2}
		
		We now turn our attention to the case in Theorem~\ref{T:GLambdaClosedForm} when $1<t\leq2$. The line connecting $(-J,m)$ and $(J,m+2)$ will intersect the $x$-axis at
		\begin{equation*}
			x = -mJ-J \geq -J^2-J.
		\end{equation*}
		So, for any fixed $t$ between $1$ and $2$, for $J$ sufficiently large, all points on the line $y=m$ will be seen before any point on the line $y=m+2$. So, for any point $Q$, the next point that is seen $\widehat{Q}$ will be at most one line away (in other words $\widehat{Q}$ will lie on the line $y=m-1$, $y=m$, or $y=m+1$). Let $A_m$ be the $x$-coordinate of the intersection between the line joining $P_{t,J}$ to $(J,m+1)$ and the line $y=m$, and let $B_m$ be the $x$-coordinate of the intersection between the line joining $P_{t,J}$ to $(-J,m-1)$ and the line $y=m$. We find,
		\begin{equation*}
			\begin{split}
				A_m &= J-\sdfrac{tJ^2}{m} + O(1), \\
				B_m &= \sdfrac{tJ^2}{m} - J + O(1).
			\end{split}
		\end{equation*}
		
		For any point $Q$ on the line $y=m$, $\widehat{Q}$ can lie on the line $y=m+1$ only when $Q$ lies to the left of $(A_m, m)$, and on the line $y=m-1$ only when $Q$ lies to the right of $(B_m, m)$. In order to count the number of points $(a,m)$ for which the angle to the next point is greater than or equal to $\lambda\Delta_{av}$, we will divide our rectangle into three regions, one for which $(a,m)$ lies to the left of $(A_m,m)$, one for which $(a,m)$ is between $(A_m,m)$ and $(B_m, m)$, and one for which $(a,m)$ lies to the right of $(B_m, m)$. We will denote by $\Sigma_1$ the total number of points (satisfying our angle condition) in the first region, by $\Sigma_0$ those points in the second region, and by $\Sigma_{-1}$ those points in the third region. Then,
		\begin{equation}
			G_{t,J}(\lambda) = \frac{1}{2J^2+O(J)}\left(\Sigma_1+\Sigma_0+\Sigma_{-1}\right).
		\end{equation}
		In particular, $\Sigma_1$, $\Sigma_0$, and $\Sigma_{-1}$ split the rectangle into regions where we have interference from the line above, no interference, and interference from the line below respectively.
		
		\subsection{\texorpdfstring{Calculating $\Sigma_1$ -- Interference from the Line Above}{Calculating Sigma1 -- Interference from the Line Above}}\label{SS:Sigma1}
		
		To find the angle between $Q=(a,m)$ and $\widehat{Q}$, we consider the projection $\tilde{Q}=(s,m+1)$ of $Q$ onto the line $y=m+1$. The distance between~$\tilde{Q}$ and the next integer point to its left will be
		\begin{equation*}
			\{s\} = \Big\{a+\sdfrac{tJ^2+a}{m}\Big\} 
			= \Big\{\sdfrac{tJ^2+a}{m}\Big\}.
		\end{equation*}
		Since $\widehat{Q}$ will be either this integer point or $(a-1,m)$, we find that the angle between $Q$ and $\widehat{Q}$ will be
		\begin{equation}\label{E:SimplifyMinsAngles}
			\begin{split}
				\min\left\{
				\Big\{\sdfrac{tJ^2+a}{m}\Big\}\sdfrac{m+1}{t^2J^4} 
				+ O\left(\sdfrac{1}{J^4}\right),\ 
				\sdfrac{m}{t^2J^4}
				+O\left(\sdfrac{1}{J^4}\right)
				\right\}  
				= \Big\{\sdfrac{tJ^2+a}{m}\Big\}\sdfrac{m}{t^2J^4} 
				+ O\left(\sdfrac{1}{J^4}\right), 
			\end{split}
		\end{equation}
		where the equality arises from noticing that 
		\begin{equation*}
			\sdfrac{m}{t^2J^4}+O\left(\sdfrac{1}{J^4}\right) \leq \{s\}\sdfrac{m+1}{t^2J^4}+O\left(\sdfrac{1}{J^4}\right)
		\end{equation*} 
		implies that $1=\{s\}+O(1/J)$. So, we want to count the number of points $(a,m)$ such that $-J\leq a\leq A_m$ and
		\begin{equation*}
			\left\{\sdfrac{tJ^2+a}{m}\right\} 
			\geq \sdfrac{\lambda tJ}{2m}
			\left(1+O\Big(\sdfrac{1}{J}\Big)\right).
		\end{equation*}
		We require that $m\geq tJ/2 + O(1)$, otherwise $A_m < -J$ and $(A_m, m)$ would lie outside the rectangle. We will also require $m\geq \lambda tJ/2 + O(1)$, otherwise all angles on the line $y=m$ will be too small. Writing
		\begin{equation*}
			x_m = \left\{\sdfrac{tJ^2}{m}\right\}
			\text{\quad and\quad }
			y_m = \sdfrac{\lambda tJ}{2m},
		\end{equation*}
		we see that $\Sigma_1$ is
		\begin{equation*}
			\begin{split}
				=& \sum_{\max\{\lambda, 1\}\ldfrac{tJ}{2}+O(1)\leq m\leq J} 
				\hspace*{-7pt}
				\#\left\{\sdfrac{a}{m}\in \sdfrac{\Z}{m}
				\cap\Big[-\sdfrac{J}{m},\, \sdfrac{J}{m}
				-\sdfrac{tJ^2}{m^2}+O\left(\sdfrac{1}{J}\right)\Big] 
				:
				\left\{x_m+\Big\{\sdfrac{a}{m}\Big\}\right\} \geq 
				y_m+O\left(\sdfrac{1}{J}\right)\right\} \\
				=& \sum_{\max\{\lambda, 1\}\ldfrac{tJ}{2}\leq m\leq J} 
				m\mu\bigg(\Big\{u\in 
				\Big[-\frac{J}{m},\,   \frac{J}{m}-\frac{tJ^2}{m^2}
				+O\left(\sdfrac{1}{J}\right)\Big] 
				:
				\left\{x_m+u\right\} \geq y_m
				+O\Big(\sdfrac{1}{J}\Big)\Big\}\bigg) 
				+ O(J) \\
				=& \sum_{\max\{\lambda, 1\}\ldfrac{tJ}{2}\leq m\leq J} 
				m\mu\bigg(\Big\{u\in \left[-\frac{J}{m}, \,\frac{J}{m}-\frac{tJ^2}{m^2}\right] : \left\{x_m+u\right\} \geq y_m\Big\}\bigg) + O(J),
			\end{split}    
		\end{equation*}
		where $\mu$ is the Lebesgue measure. We split the interval $[\max\{\lambda, 1\}tJ/2, J]$ into smaller sub-intervals of length $J^{3/4}$. Let
		\begin{gather*}
			L = J^{1/4}, \\
			C_{\lambda, t} = \max\{1,\lambda\}\frac{t}{2}, \\
			I_i = \left[\sdfrac{iJ}{L},\, \sdfrac{(i+1)J}{L}\right], \\
			R_m = -\frac{J}{m}, \\[1pt]
			S_m = \frac{J}{m} - \frac{tJ^2}{m^2}, \\[5pt]
			A_{R,S,y}(x) = \{u \in [R,S] : \{x+\{u\}\}\geq y\}. \label{E:ARSyxDef}
		\end{gather*}
		Then,
		\begin{equation}
			\Sigma_1 = \sum_{C_{\lambda, t}L\leq i\leq L} \sum_{m\in I_i} m\mu\big(A_{R_m,S_m,y_m}(x_m)\big) + O\left(\sdfrac{J^2}{L}\right). \label{E:Sigma1PreLemmas}
		\end{equation}
		Since the intervals $I_i$ for $C_{\lambda,t}L\leq i\leq L$ don't exactly cover $C_{\lambda, t}J\leq m \leq J$, we under-count or over-count by $O(J/L)$ lines, picking up an error of the size $O(J^2/L)$. To deal with the inner sum, we utilize the following two lemmas, which will be proved below.
		\begin{lemma}\label{L:Sigma1InnerSum}
			Let $A_{R,S,y}(x)$ be as in \eqref{E:ARSyxDef}. Then,
			\begin{equation*}
				\abs{
					\sum_{m\in I_i}\mu\big(A_{R,S,y}(x_m)\big) - \#I_i\int_0^1 \mu(A_{R,S,y}(x))\,\dv x
				} \ll \frac{J^{2/3}}{L} + J^{1/2}.
			\end{equation*}
		\end{lemma}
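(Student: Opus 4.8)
The plan is to exploit the fact that, as $m$ ranges over the interval $I_i$ of length $J/L$, the quantities $R_m$, $S_m$, and $y_m$ barely move, while $x_m = \{tJ^2/m\}$ equidistributes modulo $1$. So the strategy is to first freeze $R$, $S$, $y$ at a representative value, then apply a quantitative equidistribution estimate to the sequence $(x_m)_{m\in I_i}$, and finally add back the error from having frozen the other parameters.

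First I would estimate the variation of $R_m$, $S_m$, $y_m$ across $I_i$. Since $m\asymp J$ on $I_i$ and $\lvert I_i\rvert = J/L = J^{3/4}$, we have $R_m - R_{m'} = O(J\cdot J^{3/4}/J^2) = O(L^{-1}) = O(J^{-1/4})$, and similarly for $S_m$ and $y_m$ (the term $tJ^2/m^2$ has derivative $O(1/J)$, so it varies by $O(J^{3/4}/J) = O(J^{-1/4})$ as well). The map $(R,S,y)\mapsto \mu(A_{R,S,y}(x))$ is Lipschitz (moving an endpoint of $[R,S]$ by $\eta$ changes the measure by at most $\eta$; moving the threshold $y$ by $\eta$ changes $\{u:\{x+\{u\}\}\ge y\}$ by at most $\eta$ in measure). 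Hence replacing $R_m,S_m,y_m$ by fixed values $R,S,y$ (say the values at the left endpoint of $I_i$) costs $O(J^{-1/4})$ per term, i.e. $O(J^{3/4}\cdot J^{-1/4}) = O(J^{1/2})$ in total — this accounts for the second term in the claimed bound. After this reduction it remains to bound
\[
\Bigl\lvert \sum_{m\in I_i}\mu\bigl(A_{R,S,y}(x_m)\bigr) - \#I_i\int_0^1 \mu\bigl(A_{R,S,y}(x)\bigr)\,\dv x\Bigr\rvert.
\]

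The main step — and the main obstacle — is this equidistribution estimate. The function $f(x) := \mu(A_{R,S,y}(x))$ is a fixed (piecewise-linear, bounded, $1$-periodic) function of $x$, so by standard Fourier/Koksma–Hlawka-type reasoning the discrepancy of the sum is controlled by the discrepancy of the sequence $x_m = \{tJ^2/m\}$ for $m\in I_i$, together with the bounded variation of $f$ (which is $O(1)$, uniformly in $R,S,y$, since $f$ has $O(1)$ monotonicity intervals). So the task is to show that $\{tJ^2/m : m\in I_i\}$ is equidistributed mod $1$ with discrepancy $O((J^{2/3}/L)/\#I_i) = O(J^{2/3}/J) = O(J^{-1/3})$, which after multiplying by $\#I_i = J/L = J^{3/4}$ gives the stated $J^{2/3}/L = J^{5/12}$. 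Wait — let me recompute: $\#I_i \cdot J^{-1/3} = J^{3/4}\cdot J^{-1/3} = J^{5/12}$, whereas $J^{2/3}/L = J^{2/3}/J^{1/4} = J^{5/12}$; good, these match. To get the discrepancy bound for $x_m = tJ^2/m$, I would apply the Erdős–Turán inequality and estimate the exponential sums $\sum_{m\in I_i} e(h\,tJ^2/m)$ for $1\le h\le H$ (with $H$ a suitable power of $J$) by the standard technique for sums of the form $\sum e(c/m)$: either van der Corput's method (the second derivative of $x\mapsto tJ^2/x$ is $\asymp tJ^2/x^3 \asymp 1/J$ on $I_i$, so van der Corput's exponent-pair bound gives a sum of size $O(\#I_i \cdot (1/J)^{1/2} + (1/J)^{-1/2}) = O(J^{3/4}\cdot J^{-1/2} + J^{1/2}) = O(J^{1/2})$), or a direct Kloosterman/Weil-type bound after a change of variable $n = tJ^2/m$. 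Summing over $h\le H$ and optimizing $H$ produces the discrepancy bound $O(J^{-1/3})$ (the exponent $2/3$ in the lemma is exactly what van der Corput with second-derivative $\asymp J^{-1}$ delivers over an interval of length $J^{3/4}$). The delicate point to get right is that $tJ^2/m$ is not an integer, so one must be careful with the fractional parts and the endpoints; but since $t$ is fixed and irrational issues don't arise (the implied constants are allowed to depend on $t$), the van der Corput estimate applies cleanly. Assembling the frozen-parameter error $O(J^{1/2})$ with the equidistribution error $O(J^{2/3}/L)$ yields the lemma.
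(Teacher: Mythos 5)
Your proposal is correct and follows essentially the same route as the paper: the Erd\H{o}s--Tur\'an inequality applied to the sequence $x_m=tJ^2/m$, with the exponential sums $\sum_{m\in I_i} e(ltJ^2/m)$ bounded by a second-derivative/stationary-phase estimate (the paper uses Montgomery's Process B, Lemma~\ref{L:MontgomeryBoundExpSum}), followed by the Koksma-type inequality of Lemma~\ref{L:MontgomerySumIntegral} using the bounded variation of $x\mapsto\mu(A_{R,S,y}(x))$. Two bookkeeping remarks: freezing $R,S,y$ is unnecessary since they are already fixed in the lemma statement (the paper absorbs the variation of $R_m,S_m,y_m$ \emph{before} invoking the lemma), and the $J^{1/2}$ term actually arises from the $\lambda_2^{-1/2}\asymp (J/l)^{1/2}$ contribution to the exponential sums summed over $l$ in Erd\H{o}s--Tur\'an, not from parameter variation --- so the unnormalized discrepancy is $O(J^{5/12}+J^{1/2})$ rather than the $O(J^{5/12})$ your accounting suggests, which is precisely what the $+J^{1/2}$ allowance in the statement absorbs.
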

		\begin{lemma}\label{L:Sigma1Integral}
			Let $A_{R,S,y}(x)$ be as in \eqref{E:ARSyxDef}. Then,
			\begin{equation*}
				\int_0^1 \mu\big(A_{R,S,y}(x)\big)\,\dv x = (S-R)(1-y).
			\end{equation*}
		\end{lemma}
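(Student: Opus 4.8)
The plan is to prove Lemma~\ref{L:Sigma1Integral} by interchanging the two integrations and recognizing the resulting inner average over $x$ as the normalized length of an arc of the circle $\R/\Z$. The first step is the harmless observation that $\{x+\{u\}\}=\{x+u\}$ for all real $x,u$, since $\{u\}$ and $u$ differ by an integer; thus $A_{R,S,y}(x)=\{u\in[R,S]:\{x+u\}\geq y\}$, and the integrand $(x,u)\mapsto\mathbf 1[\{x+u\}\geq y]$ is nonnegative and measurable on $[0,1]\times[R,S]$.

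Next I would apply Tonelli's theorem to write
\[
\int_0^1\mu\big(A_{R,S,y}(x)\big)\,\dv x=\int_0^1\!\!\int_R^S\mathbf 1[\{x+u\}\geq y]\,\dv u\,\dv x=\int_R^S\left(\int_0^1\mathbf 1[\{x+u\}\geq y]\,\dv x\right)\dv u.
\]
For each fixed $u$, the map $x\mapsto\{x+u\}$ is a measure-preserving bijection of $[0,1)$ onto itself — it is just rotation by $\{u\}$ on $\R/\Z$ — so the inner integral equals $\mu\big(\{w\in[0,1):w\geq y\}\big)$. In the range of parameters occurring in \eqref{E:Sigma1PreLemmas} one has $0\le y\le 1$ (indeed $y_m=\lambda tJ/(2m)\le 1$ once $m\ge\max\{1,\lambda\}tJ/2$), so this measure is simply $1-y$, a constant independent of $u$.

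Finally, integrating the constant $1-y$ over $u\in[R,S]$ yields $(S-R)(1-y)$, which is the claimed identity. I do not anticipate any genuine obstacle here: the only things to keep track of are the identity $\{x+\{u\}\}=\{x+u\}$, the measure-preservation of circle rotation (which makes the $x$-average independent of $u$ and equal to the arc length $1-y$), and the implicit hypothesis $0\le y\le1$, without which the right-hand side would not even be the measure of a set. If one prefers to avoid invoking the circle-rotation principle, the inner integral can alternatively be computed directly by splitting $[0,1]$ at $x=1-\{u\}$ into the two pieces on which $\{x+u\}=x+\{u\}$ and $\{x+u\}=x+\{u\}-1$, and adding the lengths of the resulting subintervals where $\{x+u\}\ge y$ holds; the two pieces together contribute exactly $1-y$.
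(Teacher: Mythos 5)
Your proposal is correct and follows essentially the same route as the paper: both apply Tonelli to swap the order of integration and then show the inner $x$-integral equals $1-y$ for each fixed $u$ (the paper by explicit casework on $\{u\}$ versus $y$, you by invoking measure-preservation of the rotation $x\mapsto\{x+u\}$, with the casework version noted as an alternative). Your explicit flagging of the implicit hypothesis $0\le y\le 1$ is a sensible addition.
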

		We first re-index $m$, $R_m$, $S_m$, and $y_m$ in terms of $i$ as
		\begin{gather*}
			m = \frac{iJ}{L}\left(1+O\left(\sdfrac{1}{L}\right)\right), \\
			y_i := \frac{\lambda tL}{2i} = y_m\left(1+O\left(\sdfrac{1}{L}\right)\right), \\
			R_i := -\frac{L}{i} = R_m\left(1+O\left(\sdfrac{1}{L}\right)\right), \\
			S_i := \left(\frac{L}{i}-\frac{tL^2}{i^2}\right) = S_m\left(1+O\left(\sdfrac{1}{L}\right)\right).
		\end{gather*}
		Then, since $\mu\big(A_{R_m, S_m, y_m}(x_m)\big)$ is
		\begin{equation*}
			\begin{split}
				&= \mu\left(\left\{u\in\left[R_i\left(1+O\left(\sdfrac{1}{L}\right)\right), S_i\left(1+O\left(\sdfrac{1}{L}\right)\right)\right] 
				:
				\{x_m + \{u\}\} \geq y_i\left(1+O\left(\sdfrac{1}{L}\right)\right)\right\}\right) \\
				&= \mu\big(\{u\in[R_i, S_i] 
				:
				\{x_m+\{u\}\} \geq y_i\}\big) + O\left(\sdfrac{1}{L}\right) \\
				&= \mu\big(A_{R_i, S_i, y_i}(x_m)\big) + O\left(\sdfrac{1}{L}\right),
			\end{split}  
		\end{equation*}
		
		we can apply Lemmas~\ref{L:Sigma1InnerSum} and \ref{L:Sigma1Integral} to \eqref{E:Sigma1PreLemmas} to find
		\begin{align*}
			\Sigma_1 &= \sum_{C_{\lambda, t}L\leq i \leq L} 
			\sdfrac{iJ}{L}\left(1+O\left(\sdfrac{1}{L}\right)\right) \sum_{m\in I_i} \mu\big(A_{R_i, S_i, y_i}(x_m)\big)
			+ O\left(\sdfrac{J^2}{L}\right) \\
			&= \sum_{C_{\lambda, t}L\leq i\leq L}
			\sdfrac{iJ}{L}
			\left(\sdfrac{J}{L}+O(1)\right)
			\int_0^1 \mu\big(A_{R_i, S_i, y_i}(x)\big)\,\dv x
			+ O\Big(\sdfrac{J^2}{L}+J^{5/3}+J^{3/2}L\Big)\\
			&= \sdfrac{J^2}{L}\sum_{C_{\lambda, t}\leq i\leq L} 
			\sdfrac{i}{L}\left(\sdfrac{2L}{i}-\sdfrac{tL^2}{i^2}\right)
			\left(1-\sdfrac{\lambda tL}{2i}\right)
			+ O\Big(\sdfrac{J^2}{L}+J^{5/3}+J^{3/2}L\Big)\\
			&= \frac{J^2}{L}\sum_{C_{\lambda, t}L\leq i\leq L} 
			\left(2-\frac{tL}{i}(1+\lambda)+\sdfrac{\lambda t^2 L^2}{2i^2}\right) 
			+ O\Big(\sdfrac{J^2}{L}+J^{5/3}+J^{3/2}L\Big).
		\end{align*}
		We now deal with this final sum using the following lemma, which will be proved below.
		\begin{lemma}\label{L:Sigma1OuterSum}
			\begin{equation*}
				\Bigg|
					\sum_{C_{\lambda, t}\leq\frac{i}{L}\leq 1}\left(2-\frac{tL}{i}(1+\lambda)+\frac{\lambda t^2 L^2}{2i^2}\right) 
					- L\int_{C_{\lambda, t}}^1 
					\left(2-\frac{t}{w}(1+\lambda)+\frac{\lambda t^2}{2w^2}\right) \,\dv w
				\Bigg|
				\ll 1.
			\end{equation*}
		\end{lemma}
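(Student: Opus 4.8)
The plan is to recognize the bracketed expression as $L$ times the difference between a Riemann sum of mesh $1/L$ and the integral it approximates, and to control it by a routine quadrature estimate. Set
\begin{equation*}
	f(w) := 2 - \frac{t}{w}(1+\lambda) + \frac{\lambda t^2}{2w^2},
\end{equation*}
so that the sum in the lemma is $\sum_{C_{\lambda,t}\le n/L\le 1} f(n/L)$, taken over integers $n$. We may assume $C_{\lambda,t}\le 1$, since otherwise the sum in \eqref{E:Sigma1PreLemmas} is empty, $\Sigma_1=0$, and the lemma is not invoked. Then $[C_{\lambda,t},1]$ is a compact interval bounded away from $0$ (recall $C_{\lambda,t}=\max\{1,\lambda\}t/2>0$), on which $f$ is $C^\infty$; in particular $\|f\|_{\infty}$ and $\|f'\|_{\infty}$ over this interval are finite quantities depending only on $\lambda$ and $t$, which is exactly the type of implied constant we are permitted.

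First I would write $n_0=\lceil C_{\lambda,t}L\rceil$ and $n_1=\lfloor L\rfloor$ and split $\int_{C_{\lambda,t}}^1 f$ as $\int_{C_{\lambda,t}}^{n_0/L} f + \sum_{n=n_0}^{n_1-1}\int_{n/L}^{(n+1)/L} f + \int_{n_1/L}^1 f$. The two end pieces have length $<1/L$ and contribute $O(\|f\|_{\infty}/L)$. On $[n/L,(n+1)/L]$ the mean value theorem gives $f(w)=f(n/L)+O(\|f'\|_{\infty}/L)$, so $\int_{n/L}^{(n+1)/L}f = \frac1L f(n/L)+O(\|f'\|_{\infty}/L^2)$; summing over the $O(L)$ admissible $n$, then adjoining the single term $\frac1L f(n_1/L)=O(\|f\|_{\infty}/L)$ to complete the summation range to $n_0\le n\le n_1$, one obtains
\begin{equation*}
	\int_{C_{\lambda,t}}^1 f(w)\,\dv w = \frac1L\sum_{C_{\lambda,t}\le n/L\le 1} f(n/L) + O\!\left(\frac{\|f\|_{\infty}+\|f'\|_{\infty}}{L}\right).
\end{equation*}
Multiplying through by $L$ gives the asserted bound. (Alternatively, since $\int f$ has the antiderivative $2w-(1+\lambda)t\log w-\lambda t^2/(2w)$ and the sum reduces to evaluating $\sum 1$, $\sum 1/n$ and $\sum 1/n^2$ over the range, one may instead invoke the elementary estimates $\sum_{a<n\le b}1/n=\log(b/a)+O(1/a)$ and $\sum_{a<n\le b}1/n^2=1/a-1/b+O(1/a^2)$ with $a=C_{\lambda,t}L$ and $b=L$; this again produces the claimed $O(1)$ bound.)

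There is no genuine obstacle: the statement is a standard Euler--Maclaurin / Riemann-sum comparison. The two points that require a moment's care are the fuzziness of the summation endpoints $C_{\lambda,t}L$ and $L$ --- harmless, since a mismatch of $O(1)$ terms each of size $O(\|f\|_{\infty}/L)$ contributes only $O(1)$ after the factor $L$ --- and the requirement that the implied constant be of the allowed form $O_{\lambda,t}(1)$, which holds precisely because $C_{\lambda,t}>0$ keeps the interval $[C_{\lambda,t},1]$ away from the pole of $f$ at $w=0$, so that $\|f\|_{\infty}$ and $\|f'\|_{\infty}$ there are finite.
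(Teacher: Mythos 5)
Your proof is correct, but it follows a genuinely different route from the paper's. The paper proves this lemma by reusing its discrepancy machinery: it rescales the summation variable affinely via $u_i = (i/L - C_{\lambda,t})/(1-C_{\lambda,t})$ so that the $u_i$ form an equally spaced sequence in $[0,1]$ with discrepancy $D(L)=O(1)$, and then applies Lemma~\ref{L:MontgomerySumIntegral} to bound the sum-minus-integral difference by $\frac12 D(L)\Var_{[C_{\lambda,t},1]}(f)+O(1)$, with the total variation finite because $f$ is smooth on a compact interval bounded away from $w=0$. You instead give a direct Riemann-sum (Euler--Maclaurin) comparison, partitioning $[C_{\lambda,t},1]$ into subintervals of length $1/L$, using the mean value theorem on each, and accounting separately for the $O(1)$ boundary terms; your alternative of evaluating both sides explicitly via $\sum 1/n$ and $\sum 1/n^2$ estimates also works. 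Both arguments hinge on the same essential point, which you identify correctly: $C_{\lambda,t}>0$ keeps the interval away from the pole of $f$ at the origin, so $\|f\|_\infty$ and $\|f'\|_\infty$ (equivalently, $\Var f$) are $O_{\lambda,t}(1)$. What the paper's formulation buys is uniformity with the rest of the argument --- the same discrepancy lemma handles the inner sums, and the bounded-variation version carries over verbatim to Lemma~\ref{L:AlltOuterSum}, where the integrand is only piecewise smooth; your version is more elementary and self-contained but uses $f\in C^1$, which is available here. Your preliminary reduction to $C_{\lambda,t}\leq 1$ is also sound, since that is exactly the condition $\lambda\leq 2/t$ (given $t\leq 2$) under which the lemma is applied.
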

		Therefore, if $\lambda\leq 2/t$, then
		\begin{align*}
			\Sigma_1 &= J^2\int_{C_{\lambda, t}}^1 
			\left(2-\frac{t}{w}(1+\lambda)+\frac{\lambda t^2}{2w^2}\right) \,\dv w + O\left(\frac{J^2}{L}+J^{5/3}+J^{3/2}L\right) \\
			&= 2J^2(1-C_{\lambda, t}) -tJ^2(1+\lambda)\log\left(\frac{1}{C_{\lambda, t}}\right)-\frac{\lambda t^2 J^2}{2}\left(1-\frac{1}{C_{\lambda, t}}\right) + O\left(\frac{J^2}{L}+J^{5/3}+J^{3/2}L\right).
		\end{align*}
		Since $L=J^{1/4}$, the error term comes to $O(J^{7/4})$. If $\lambda\geq 2/t$, then $\Sigma_1=O(J^{7/4})$.
		
		We now wish to prove Lemmas~\ref{L:Sigma1InnerSum}, \ref{L:Sigma1Integral}, and \ref{L:Sigma1OuterSum}. To do this, we use the following three results from Montgomery's monograph~\cite{Mon1994}.
		\begin{lemma}[{\cite[Chapter 1, inequality~(13)]{Mon1994}}]\label{L:MontgomerySumIntegral}
			Let $(x_n)_{n=1}^N$ be a finite sequence with discrepancy $D(N)$. Suppose that $F$ is of bounded variation on $[0,1]$ and that $F$ is continuous at the points $x_n$. Then,
			\begin{equation*}
				\Bigg|
					\sum_{n=1}^N F(x_n) - N\int_0^1 F(x)\,\dv x
				\Bigg|
				\leq \frac{1}{2}D(N)\Var_{[0,1]}(F).
			\end{equation*}
		\end{lemma}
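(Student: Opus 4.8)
\emph{Proof proposal.} The stated inequality is a form of Koksma's inequality, and the plan is to realize the left-hand side as a Riemann--Stieltjes integral against the discrepancy function of the sequence and then integrate by parts.

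First I would introduce the discrepancy function $\psi(x):=\#\{n\le N:x_n\le x\}-Nx$ for $x\in[0,1]$. After reducing the $x_n$ modulo $1$ so that $x_n\in[0,1)$, one has $\psi(0)=0$ and $\psi(1)=N-N=0$. By the very definition of $D(N)$, for all $0\le a\le b\le 1$ we have $\bigl|\psi(b)-\psi(a)\bigr|=\bigl|\#\{n:a<x_n\le b\}-N(b-a)\bigr|\le D(N)$, so the oscillation of $\psi$ on $[0,1]$ is at most $D(N)$; since $0$ lies in the range of $\psi$, there is a constant $c$ with $\sup_{[0,1]}|\psi-c|\le\tfrac12 D(N)$ and $|c|\le\tfrac12 D(N)$.

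Next, since $\psi$ is a right-continuous step function whose jumps occur only at the points $x_n$, at which $F$ is continuous by hypothesis, the Riemann--Stieltjes integrals $\int_{[0,1]}F\,\dv\psi$ and $\int_{[0,1]}\psi\,\dv F$ exist, and writing $\psi$ as the difference of the counting function $\#\{n:x_n\le x\}$ and $Nx$ gives
\[
\sum_{n=1}^N F(x_n)-N\int_0^1 F(x)\,\dv x=\int_{[0,1]}F(x)\,\dv\psi(x).
\]
Integration by parts then yields $\int_{[0,1]}F\,\dv\psi=\bigl[F\psi\bigr]_0^1-\int_{[0,1]}\psi\,\dv F=-\int_{[0,1]}\psi\,\dv F$, the boundary term vanishing because $\psi(0)=\psi(1)=0$. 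Splitting $\psi=(\psi-c)+c$ and using $\bigl|\int_{[0,1]}(\psi-c)\,\dv F\bigr|\le\sup_{[0,1]}|\psi-c|\cdot\Var_{[0,1]}(F)\le\tfrac12 D(N)\Var_{[0,1]}(F)$ together with $\int_{[0,1]}c\,\dv F=c\bigl(F(1)-F(0)\bigr)$ reduces the whole estimate to controlling the single endpoint term $c\bigl(F(1)-F(0)\bigr)$.

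I expect this endpoint term, and hence the exact constant, to be the only point requiring care. It vanishes identically whenever $F(0)=F(1)$, which is the case in every application in this paper (there $F$ is $1$-periodic in $x$), giving exactly $\tfrac12 D(N)\Var_{[0,1]}(F)$. In general one absorbs it via $|c|\le\tfrac12 D(N)$ and $|F(1)-F(0)|\le\Var_{[0,1]}(F)$, at worst replacing the constant $\tfrac12$ by $1$ --- which is all that is needed downstream, where only $\ll$ is required. Everything else is routine Riemann--Stieltjes calculus, and no properties of the sequence $(x_n)$ are used beyond the definition of $D(N)$.
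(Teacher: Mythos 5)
This lemma is not proved in the paper at all: it is imported verbatim from Montgomery \cite[Ch.~1, ineq.~(13)]{Mon1994}, so there is no internal proof to compare against. Your proposal supplies a correct, self-contained proof along the standard Koksma/Erd\H{o}s--Tur\'an lines: passing to the centered discrepancy function $\psi$, converting the sum minus integral into $\int F\,\dv\psi$, integrating by parts, and bounding $\int(\psi-c)\,\dv F$ by $\sup|\psi-c|\cdot\Var_{[0,1]}(F)\le\tfrac12 D(N)\Var_{[0,1]}(F)$. You have also correctly isolated the one delicate point, namely the term $c\,(F(1)-F(0))$: the constant $\tfrac12$ as stated really belongs to Montgomery's setting, where $F$ is a function on the torus (so $F(0)=F(1)$ and the discrepancy is over arcs mod $1$); in the non-periodic applications of the paper (e.g.\ Lemma~\ref{L:Sigma1OuterSum} and Lemma~\ref{L:AlltOuterSum}) only $\ll D(N)\Var(F)$ is used, so your weaker constant $1$ suffices, while in the periodic applications (Lemmas~\ref{L:Sigma1InnerSum} and~\ref{L:AlltInnerSum}, where $F(x)=\mu(A_{R,S,y}(x))$ is $1$-periodic) your argument recovers $\tfrac12$ exactly. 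The only loose end is cosmetic: with your convention $\psi(x)=\#\{n:x_n\le x\}-Nx$ one has $\psi(0)=\#\{n:x_n=0\}$, which need not vanish; placing the points in $(0,1]$ (or using strict inequality in the counting function) removes this and does not affect anything else. With that adjustment the argument is complete and fully adequate for every use of the lemma in the paper.
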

		
			\begin{lemma}[{Erd\H{o}s-Tur\'an Inequality~\cite[Corollary 1.1]{Mon1994}}]\label{L:ErdosTuran}
				Let $(x_n)_{n=1}^N$ be a finite sequence with discrepancy $D(N)$. For any positive integer $K$,
				\begin{equation*}
					D(N) \leq \frac{N}{K+1} + 3\sum_{l=1}^K \frac{1}{l} \,
					\Bigg|
						\sum_{n\leq N} e(lx_n)
					\Bigg|.
				\end{equation*}
			\end{lemma}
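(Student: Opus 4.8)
The statement is the classical Erd\H{o}s--Tur\'an inequality, and the plan is to reproduce the standard argument based on the Beurling--Selberg extremal trigonometric polynomials. The single nontrivial ingredient is a preliminary lemma: for every arc $I\subseteq\R/\Z$ and every integer $K\ge 1$ there exist trigonometric polynomials $T_I^{+}$ and $T_I^{-}$ of degree at most $K$ with $T_I^{-}(x)\le \mathbf 1_I(x)\le T_I^{+}(x)$ for all $x$, with zeroth Fourier coefficient $\widehat{T_I^{\pm}}(0)=|I|\pm\frac{1}{K+1}$, and with $\bigl|\widehat{T_I^{\pm}}(l)\bigr|\le \frac{1}{K+1}+\frac{1}{\pi|l|}$ for $1\le|l|\le K$. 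First I would record and sketch this lemma: start from Beurling's band-limited majorant of $\operatorname{sgn}$, apply Selberg's smoothing, and periodize to obtain one-sided approximants of $\mathbf 1_I$; concretely the construction can be packaged through the Fej\'er kernel $\Delta_K(x)=\sum_{|l|\le K}\bigl(1-\frac{|l|}{K+1}\bigr)e(lx)$, which is nonnegative with $\int_0^1\Delta_K=1$, so that a suitable combination of $\Delta_K$-convolutions of slightly dilated indicators yields the required polynomials with the stated coefficient decay.

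Granting that lemma, the rest is bookkeeping. Fix an arc $I$. From the majorant,
\[
\#\{n\le N: x_n\in I\}=\sum_{n=1}^N \mathbf 1_I(x_n)\le \sum_{n=1}^N T_I^{+}(x_n)=\sum_{|l|\le K}\widehat{T_I^{+}}(l)\sum_{n=1}^N e(l x_n).
\]
Isolating the term $l=0$ contributes $N|I|+\frac{N}{K+1}$, so
\[
\#\{n\le N: x_n\in I\}-N|I|\le \frac{N}{K+1}+\sum_{1\le|l|\le K}\bigl|\widehat{T_I^{+}}(l)\bigr|\,\Bigl|\sum_{n\le N}e(l x_n)\Bigr|\le \frac{N}{K+1}+3\sum_{l=1}^K\frac1l\,\Bigl|\sum_{n\le N}e(l x_n)\Bigr|,
\]
where the last step combines the two signs of $l$ and uses $\frac{1}{K+1}\le\frac1l$ for $l\le K$ together with $2\bigl(1+\frac1\pi\bigr)<3$. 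Repeating the argument verbatim with the minorant $T_I^{-}$ in place of $T_I^{+}$ gives the matching lower bound for $\#\{n\le N:x_n\in I\}-N|I|$, and taking the supremum over all arcs $I$ produces the claimed bound on $D(N)$.

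The genuine obstacle is the extremal-polynomial lemma, specifically verifying the pointwise sandwich $T_I^{-}\le\mathbf 1_I\le T_I^{+}$ everywhere (not merely in the mean) while keeping the Fourier tail as small as $O(1/|l|)$; this rests on the interpolation-at-half-integers characterization of Beurling's function (equivalently Vaaler's cotangent-sum formula), which I would either cite from \cite{Mon1994} or reproduce carefully. Everything downstream of it --- the sandwiching, the extraction of the main term $N|I|$, and the constant-chasing needed to reach exactly $3$ --- is routine.
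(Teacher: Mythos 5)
The paper does not prove this lemma at all: it is quoted verbatim as Corollary~1.1 of Montgomery's monograph \cite{Mon1994}, and your sketch is precisely the standard Selberg/Vaaler extremal-polynomial argument given there (majorant and minorant of degree $\leq K$ with $\widehat{T_I^{\pm}}(0)=|I|\pm\tfrac{1}{K+1}$ and $|\widehat{T_I^{\pm}}(l)|\leq\tfrac{1}{K+1}+\tfrac{1}{\pi|l|}$, followed by the sandwich, the extraction of $N|I|$, and $2(1+\tfrac{1}{\pi})<3$). So your proposal is correct and takes essentially the same route as the cited source; the only caveat, which you already flag, is that the pointwise one-sided approximation property of the extremal polynomials is itself nontrivial and must be cited or proved in full.
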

			
			\begin{lemma}[{\cite[Chapter 3, Theorem~10~(Process B)]{Mon1994}}]\label{L:MontgomeryBoundExpSum}
				Let $A$ be a positive absolute constant. Suppose that $f(x)$ is a real-valued function such that $0<\lambda_2\leq f''(x) \leq A\lambda_2$ for all $x\in[a.b]$, and suppose that $\abs{f^{(3)}(x)}\leq A\lambda_2 (b-a)^{-1}$ and that $\abs{f^{(4)}(x)}\leq A\lambda_2 (b-a)^{-2}$ throughout this interval. Put $f'(a)=\alpha$, $f'(b)=\beta$. For integers $\nu\in[\alpha,\beta]$, let $x_\nu$ be the root of the equation $f'(x)=\nu$. Then
				\begin{equation}\label{eqLemmaMontgomery}
					\sum_{a\leq n\leq b} e(f(n)) = e(1/8)\sum_{\alpha\leq\nu\leq\beta} \frac{e(f(x_\nu)-\nu x_nu)}{\sqrt{f''(x_\nu)}} + O(\log(2+\beta-\alpha)) + O(\lambda_2^{-1/2}).
				\end{equation}
			\end{lemma}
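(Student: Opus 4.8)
The statement is quoted verbatim from Montgomery's monograph \cite{Mon1994}, so strictly one only needs to invoke it; what follows sketches the standard route to reproving it, namely the $B$-process of the van der Corput method --- Poisson summation combined with the method of stationary phase. The plan is: first apply the (truncated) Poisson summation formula to $\sum_{a\le n\le b}e(f(n))$, rewriting it as $\sum_{\nu\in\Z}I_\nu$ plus $O(1)$ boundary contributions, where $I_\nu:=\int_a^b e\big(f(x)-\nu x\big)\,\dv x$. Since $f''>0$ on $[a,b]$, the derivative $f'$ is strictly increasing, so the phase $x\mapsto f(x)-\nu x$ has a stationary point in $(a,b)$ exactly when $\nu\in(\alpha,\beta)$, and that point is precisely the root $x_\nu$ of $f'(x)=\nu$.

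Next I would evaluate $I_\nu$ for $\nu$ well inside $[\alpha,\beta]$ by stationary phase. Writing $f(x)-\nu x=f(x_\nu)-\nu x_\nu+\tfrac12 f''(x_\nu)(x-x_\nu)^2+(\text{cubic and higher remainder})$ and using the Fresnel evaluation $\int_\R e\big(\tfrac12 f''(x_\nu)u^2\big)\,\dv u=e(1/8)/\sqrt{f''(x_\nu)}$ produces the main term $e(1/8)\,e\big(f(x_\nu)-\nu x_\nu\big)/\sqrt{f''(x_\nu)}$. The hypotheses $0<\lambda_2\le f''\le A\lambda_2$, $|f^{(3)}|\le A\lambda_2(b-a)^{-1}$, $|f^{(4)}|\le A\lambda_2(b-a)^{-2}$ are precisely what is needed to bound the remainder in this expansion and the truncated Gaussian tails, so that the errors accumulated over $\nu$ are absorbed into the error terms of \eqref{eqLemmaMontgomery}. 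For $\nu$ outside a fixed neighbourhood of $[\alpha,\beta]$ there is no stationary point; since $|f'(x)-\nu|\gg\dist(\nu,[\alpha,\beta])$ and the $f''$-bound controls the variation of $1/(f'-\nu)$, repeated integration by parts gives $|I_\nu|\ll 1/\dist(\nu,[\alpha,\beta])$, and these contributions sum to $O(\log(2+\beta-\alpha))$.

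The delicate part --- and the main obstacle --- is the transition range in which $x_\nu$ lies within $O(\lambda_2^{-1/2})$ of an endpoint $a$ or $b$ (equivalently, $\nu$ within $O(1)$ of $\alpha$ or $\beta$): there the symmetric stationary-phase expansion is invalid because the Gaussian integral is cut off asymmetrically, so one must estimate $I_\nu$ for these few values directly --- e.g.\ via the second-derivative (van der Corput) bound $|I_\nu|\ll\lambda_2^{-1/2}$, or through incomplete Fresnel integrals --- and absorb the resulting discrepancy, together with the Poisson boundary terms, into the stated $O(\lambda_2^{-1/2})$. Reassembling the three pieces (the stationary-phase sum over $\alpha\le\nu\le\beta$, the logarithmic tail from $\nu$ far from $[\alpha,\beta]$, and the endpoint error) then yields \eqref{eqLemmaMontgomery}.
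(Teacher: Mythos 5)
Your proposal is correct and matches the paper: the lemma is an external result cited from Montgomery's monograph (Chapter 3, Theorem 10, Process B), and the paper gives no proof of its own, exactly as you note in your first sentence. Your supplementary sketch of the $B$-process (truncated Poisson summation, stationary phase at $x_\nu$ for $\nu\in[\alpha,\beta]$, integration by parts for the non-stationary $\nu$ giving the $O(\log(2+\beta-\alpha))$ term, and the endpoint/transition analysis giving the $O(\lambda_2^{-1/2})$ term) is an accurate outline of the standard argument, though it is not needed for the paper's purposes.
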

			
			\begin{proof}[Proof of Lemma~\ref{L:Sigma1InnerSum}]
				We will use Lemma~\ref{L:MontgomeryBoundExpSum} to find an upper bound for the exponential sum
				\begin{equation*}
					\sum_{\frac{iJ}{L}\leq n \leq \frac{(i+1)J}{L}} e\left(\frac{ltJ^2}{n}\right),
				\end{equation*}
				with which we can apply Lemma~\ref{L:ErdosTuran} to bound the discrepancy $D(N)$ (where $N=J/L+O(1)$) of the sequence $x_n=tJ^2/n$. In particular, we will show
				\begin{equation}\label{E:DiscrepancyBound}
					D(N) \ll \frac{J^{2/3}}{L} + J^{1/2}.
				\end{equation}
				Since the function $F(x)=\mu\big(A_{R,S,y}(x)\big)$ is of bounded variation over $[0,1]$, the bound \eqref{E:DiscrepancyBound} and Lemma~\ref{L:MontgomerySumIntegral} give us Lemma~\ref{L:Sigma1InnerSum}. We now wish to prove \eqref{E:DiscrepancyBound}.
				First we apply Lemma~\ref{L:MontgomeryBoundExpSum} with
				\begin{equation*}
					f(x) = \frac{ltJ^2}{x}, \quad
					a = \frac{iJ}{L}, \quad
					b = \frac{(i+1)J}{L}, \quad
					\lambda_2 = \frac{2ltL^3}{(i+1)^3J},\quad
					A = 96.
				\end{equation*}    
				One thus finds,
				\begin{equation*}
					f'(x) = -\frac{ltJ^2}{x^2},\quad 
					f''(x) = \frac{2ltJ^2}{x^3}, \quad
					f^{(3)}(x) = -\frac{6ltJ^2}{x^4}, \quad
					f^{(4)}(x) = \frac{24ltJ^2}{x^5}, 
				\end{equation*}
				and
				\begin{equation*}
					(b-a)^{-1} = \frac{L}{J}, \quad
					\alpha = f'(a) = -\frac{ltL^2}{i^2}, \quad 
					\beta = f'(b) = -\frac{ltL^2}{(i+1)^2}.
				\end{equation*}
				We see that the conditions for Lemma~\ref{L:MontgomeryBoundExpSum} 
				are satisfied, since
				\begin{gather*}
					f''(x) \geq f''(b) = \frac{2ltL^3}{(i+1)^3J} = \lambda_2, \\
					f''(x) \leq f''(a) = \frac{2ltL^3}{i^3J} = \lambda_2 \frac{(i+1)^3}{i^3} \leq 8 \lambda_2 \leq A\lambda_2, \\
					\abs{f^{(3)}(x)} \leq \abs{f^{(3)}(a)} = \frac{6ltL^4}{i^4J^2} = 3\lambda_2 (b-a)^{-1}\frac{(i+1)^3}{i^4} \leq 24 \lambda_2 (b-a)^{-1} \leq A\lambda_2 (b-a)^{-1}, \\
					\abs{f^{(4)}(x)} \leq \abs{f^{(4)}(a)} = \frac{24ltL^5}{i^5 J^3} = 12 \lambda_2 (b-a)^{-2} \frac{(i+1)^3}{i^5} \leq 96\lambda_2 (b-a)^{-2} = A\lambda_2 (b-a)^{-2}.
				\end{gather*}
				
				\medskip
				Now, we want to bound the right-hand side of 
				relation~\eqref{eqLemmaMontgomery}. 
				Firstly,
				\begin{equation*}
					\lambda_2^{-1/2} = \frac{(i+1)^{3/2} \sqrt{J}}{\sqrt{2t}L^{3/2}} \frac{1}{\sqrt{l}} \ll \sqrt{\frac{J}{l}}.
				\end{equation*}
				Also, since
				\begin{equation*}
					\beta-\alpha 
					= ltL^2\left(\frac{1}{i^2}-\frac{1}{(i+1)^2}\right) 
					= ltL^2\left(\frac{2i+1}{i^2(i+1)^2}\right) 
					\ll \frac{l}{L},
				\end{equation*}
				we find
				\begin{gather*}
					\log(2+\beta-\alpha) = \log\left(2+\frac{2ltL^2}{i(i+1)^2}\right) \ll \log\left(\frac{l}{L}\right) + 1, \\[3pt]
					\intertext{and}       
					e(1/8)\sum_{\alpha\leq\nu\leq\beta} \frac{e(f(x_\nu)-\nu x_\nu)}{\sqrt{f''(x_\nu)}} \ll \left(1 + \sdfrac{l}{L}\right)\lambda_2^{-1/2} \ll \sqrt{\sdfrac{J}{l}} + \sdfrac{\sqrt{Jl}}{L}\,.
				\end{gather*}
				So, finally,
				\begin{equation*}
					\sum_{\frac{iJ}{L}\leq n\leq \frac{(i+1)J}{L}} e\left(\frac{ltJ^2}{n}\right) \ll \sqrt{\sdfrac{J}{l}} + \sdfrac{\sqrt{Jl}}{L} + \log\Big(\sdfrac{l}{L}\Big) + 1
				\end{equation*}
				
				We now plug this into the Erd\H{o}s-Tur\'an inequality using the sequence 
				$\Big\{\Ldfrac{tJ^2}{n}\Big\}_{n=iJ/L}^{(i+1)J/L}$ and recalling that $L=J^{1/4}$, we find that the discrepancy is bounded as follows:
				\begin{align*}
					D(N) &\ll \frac{J}{L} \frac{1}{K+1} + \sum_{l=1}^{K} \frac{1}{l} \left(\sqrt{\frac{J}{l}} + \frac{\sqrt{Jl}}{L} + 1 + \log\left(\sdfrac{l}{L}\right)\right) \\
					&\ll \frac{J}{L}\frac{1}{K} + \sum_{l=1}^K \frac{\sqrt{J}}{l^{3/2}} + \frac{\sqrt{J}}{L\sqrt{l}} + \frac{1}{l} + \frac{1}{l}\log\left(\sdfrac{l}{L}\right) \\
					&\ll \frac{J}{L} \frac{1}{K} + \sqrt{J} + \frac{\sqrt{JK}}{L} + \log^2(K) + \log(L) \log(K).
				\end{align*}
				Taking $K=J^{1/3}$, we find \eqref{E:DiscrepancyBound}, which completes the proof of Lemma~\ref{L:Sigma1InnerSum}.
			\end{proof}
			\medskip
			
			Now, we wish to evaluate the integral arising from Lemma~\ref{L:Sigma1InnerSum}.
			\begin{proof}[Proof of Lemma~\ref{L:Sigma1Integral}]
				For every $x,y\in [0,1]$ consider the set $B_{x,y}:=\big\{ u\in {\mathbb R}: \{x+\{ u\}\}\geq y\big\}$.
				Denote by $\chi_B$ the characteristic function of a set $B\subseteq {\mathbb R}$.
				Tonelli's theorem yields
				\begin{equation*}
					\int_0^1 \mu (A_{R,S,y}(x)) \, \dv x =
					\int_0^1 \int_R^S \chi_{B_{x,y}} (u)\, \dv u \dv x=
					\int_R^S \int_0^1 \chi_{B_{x,y}}(u)\, \dv x \dv u .
				\end{equation*}
				
				Then we have
				\begin{equation*}
					\begin{aligned}
						\chi_{B_{x,y}}(u) & = \begin{cases}
							1 & \mbox{\rm if $\{ x+\{ u\}\} \geq y$} \\[1mm]
							0 & \mbox{\rm if $\{ x+\{ u\}\} <y$}
						\end{cases}\\
						& =\begin{cases}
							1 & \mbox{\rm if $y>\{ u\}$ and $y-\{ u\} <x< 1-\{ u\}$} \\[1mm]
							1 & \mbox{\rm if $y< \{ u\}$ and $(0<x< 1-\{ u\} \ \text{or}\ 1-\{ u\}+y <x<1)$} \\[1mm]
							0 & \mbox{\rm else}
						\end{cases} \\
						& = \begin{cases}
							\chi_{[y-\{ u\},1-\{ u\}]} (x) & \mbox{\rm if $1\geq y>\{ u\}$} \\[1mm]
							\chi_{[0,1-\{ u\}] \cup [1-\{ u\}+y,1]} & \mbox{\rm if $0<y<\{ u\}$},
						\end{cases}
					\end{aligned}
				\end{equation*}
				which entails
				\begin{equation*}
					\int_0^1 \chi_{B_{x,y}} (u)\, \dv x = 
					\begin{cases}
						\mu ([y-\{ u\},1-\{ u\}])  & \mbox{\rm if $1\geq y>\{ u\}$} \\[1mm]
						\mu ([0,1-\{ u\}] +\mu ([1-\{ u\}+y,1]) & \mbox{\rm if $0<y< \{ u\}$}
					\end{cases}
					=1-y.
				\end{equation*}
				We obtain
				\begin{equation*}
					\int_0^1 \mu (A_{R,S,y}(x))\, \dv x =\int_R^S (1-y)\, \dv u = (S-R)(1-y).
				\end{equation*}
			\end{proof}
			
			\begin{proof}[Proof of Lemma~\ref{L:Sigma1OuterSum}]
				We once again employ Lemma~\ref{L:MontgomerySumIntegral}. Write
				\begin{equation*}
					f(w) = 2-\frac{t}{w}(1+\lambda)+\frac{\lambda t^2}{2w^2}, \ \ 
					\text{ and }\ \ 
					u_i = \frac{i/L-C_{\lambda, t}}{1-C_{\lambda, t}}.
				\end{equation*}
				Then, we have
				\begin{align*}
					&\phantom{=}\ 
					\Bigg|
						\sum_{C_{\lambda, t}\leq\sdfrac{i}{L}\leq L} f\left(\sdfrac{i}{L}\right) - L\int_{C_{\lambda, t}}^1 f(w)\,\dv w
					\Bigg|
					\\
					&= 
					\Bigg|
						\sum_{0\leq u_i\leq 1} f\left(u_i(1-C_{\lambda, t})+C_{\lambda, t}\right) - L(1-C_{\lambda, t})\int_0^1 f\left(v(1-C_{\lambda, t})+C_{\lambda t}\right)\,\dv v
					\Bigg|
					\\
					&= 
					\Bigg|
						\sum_{0\leq u_i\leq 1} f\left(u_i(1-C_{\lambda, t})+C_{\lambda, t}\right) - \#\{i\in\Z \colon C_{\lambda, t}L\leq i\leq L\} \int_0^1 f\left(v(1-C_{\lambda, t})+C_{\lambda t}\right)\,\dv v
					\Bigg|
					+ O(1) \\[8pt]
					&\ll \Var_{[0,1]}\left(f\left(v(1-C_{\lambda, t})+C_{\lambda, t}\right)\right) D(L) + 1 \\[8pt]
					&= \Var_{[C_{\lambda, t},1]}(f) D(L) + 1,
				\end{align*}
				where $D(L)$ is the discrepancy of the sequence $\{u_i\}\subseteq[0,1]$ (for $i\in\Z\cap[M_{k,r}^-L, M_{k,r}^+L]$). 
				Since $D(L)$ is~$O(1)$ and the function $f$ is of bounded variation, 
				we find that the final line is $O(1)$, which concludes the proof of the lemma.
			\end{proof}
			
			\subsection{\texorpdfstring{Calculating $\Sigma_{-1}$ -- Interference from the Line Below}{Calculating Sigma-1 -- Interference from the Line Below}}\label{SS:Sigma-1}
			We find $\Sigma_{-1}$ in a similar manner to~$\Sigma_1$. Firstly, to find the angle between $Q=(a,m)$ and $\widehat{Q}$, we consider the projection $\tilde{Q}=(s,m-1)$ of $Q$ onto the line $y=m-1$. The distance between $\tilde{Q}$ and the next integer point to its left will be
			\begin{equation*}
				\{s\} = \Big\{-\sdfrac{tJ^2+a}{m}\Big\},
			\end{equation*}
			hence the angle between $Q$ and $\widehat{Q}$ will be
			\begin{equation*}
				\min\left\{\Big\{-\sdfrac{tJ^2+a}{m}\Big\}\sdfrac{m-1}{t^2J^4}+O\left(\sdfrac{1}{J^4}\right),\
				\sdfrac{m}{t^2J^4}
				+O\left(\sdfrac{1}{J^4}\right)\right\} 
				= \Big\{-\sdfrac{tJ^2+a}{m}\Big\}\sdfrac{m}{t^2J^4}
				+O\left(\sdfrac{1}{J^4}\right).
			\end{equation*}
			Therefore, where $x_m$ and $y_m$ are the same as in the previous subsection, 
			we see that $\Sigma_{-1}$ is
			\begin{align*}
				&= \sum_{C_{\lambda, t}J+O(1)\leq m\leq J} 
				\#\left\{\sdfrac{a}{m}\in\sdfrac{\Z}{m}\cap
				\left[\sdfrac{tJ^2}{m^2}-\sdfrac{J}{m}
				+O\left(\sdfrac{1}{J}\right),\, \sdfrac{J}{m}\right] 
				:
				\Big\{-x_m-\Big\{\sdfrac{a}{m}\Big\}\Big\}\geq y_m 
				+ O\left(\sdfrac{1}{J}\right)\right\} \\
				&= \sum_{C_{\lambda, t}J\leq m\leq J} 
				m\mu\left(\left\{u\in\left[-\sdfrac{J}{m},\, \sdfrac{J}{m}-\sdfrac{tJ^2}{m^2}\right] 
				:
				\{-x_m+u\} \geq y_m\right\}\right)
				+ O(J).
			\end{align*}
			Just as in the previous subsection, let
			\begin{gather*}
				L=J^{1/4}, \\
				I_i = \left[\sdfrac{iJ}{L},\, \sdfrac{(i+1)J}{L}\right], \\
				R_m = -\frac{J}{m}, \\[1pt]
				S_m = \frac{J}{m}-\frac{tJ^2}{m^2}, \\[5pt]
				A_{R,S,y}(x) = \{u\in[R,S] 
				:
				\{x+\{u\}\}\geq y\}.
			\end{gather*}
			Then,
			\begin{equation}
				\Sigma_{-1} 
				= \sum_{C_{\lambda, t}L\leq i\leq L} \sum_{m\in I_i} m\mu\big(A_{R_m, S_m, y_m}(-x_m)\big) 
				+ O\left(\sdfrac{J^2}{L}\right).
			\end{equation}
			We can easily adapt Lemma~\ref{L:Sigma1InnerSum} to see that
			\begin{equation*}
				\abs{
					\sum_{m\in I_i} \mu\big(A_{R,S,y}(-x_m)\big) - \#I_i\int_0^1 
					\mu\big(A_{R,S,y}(-x)\big)\,\dv x
				} \ll \frac{J^{2/3}}{L} + J^{1/2}.
			\end{equation*}
			Furthermore, since $\mu(A_{R,S,y}(x))$ is a $1$-periodic function, we find
			\begin{equation*}
				\int_0^1 \mu\big(A_{R,S,y}(-x)\big)\,\dv x 
				= \int_{-1}^0 \mu\big(A_{R,S,y}(x)\big)\,\dv x 
				= \int_0^1 \mu\big(A_{R,S,y}(x)\big)\,\dv x,
			\end{equation*}
			hence,
			\begin{align*}
				\Sigma_{-1} &= \sum_{C_{\lambda, t}L\leq i\leq L}\#I_i\int_0^1 \mu\big(A_{R_m, S_m, y_m}(x)\big)\,\dv x + O\left(\sdfrac{J^2}{L} + J^{5/3} + J^{3/2}L\right) \\
				&= \Sigma_1 + O\Big(\sdfrac{J^2}{L} + J^{5/3} + J^{3/2}L\Big).
			\end{align*}
			Therefore, since $L=J^{1/4}$,
			\begin{equation*}
				\Sigma_{-1} 
				= \Sigma_1 + O\big(J^{7/4}\big) 
				= 2J^2(1-C_{\lambda, t}) 
				-tJ^2(1+\lambda)\log\Big(\sdfrac{1}{C_{\lambda, t}}\Big)
				-\sdfrac{\lambda t^2 J^2}{2}
				\left(1-\sdfrac{1}{C_{\lambda, t}}\right) + O\big(J^{7/4}\big).
			\end{equation*}
			
			\subsection{\texorpdfstring{Calculating $\Sigma_0$ -- No Interference}{Calculating Sigma0 -- No Interference}}\label{SS:Sigma0}
			Here, there is no interference from adjacent lines, so we deal with this similarly to how we dealt with the case when $t>2$ in Section~\ref{S:tBiggerThan2}. We want to count the points $(a,m)$ satisfying
			\begin{equation*}
				m\geq \frac{\lambda t J}{2} + O(1).
			\end{equation*}
			Thus, if $\lambda \leq 2/t$,
			\begin{equation*}
				\begin{split}
					\Sigma_0 
					&= \sum_{\frac{\lambda tJ}{2}\leq m\leq J} 
					\#\big(\Z\cap[A_m, B_m]\cap[-J,\, J]\big) + O(J) \\
					&= \sum_{\frac{\lambda tJ}{2}\leq m\leq J}
					\#\left(\Z\cap\left[J-\frac{tJ^2}{m}+O(1),\;
					\frac{tJ^2}{m}-J+O(1)\right]\cap[-J,J]\right) 
					+O(J) \\ 
					&= \sum_{\frac{\lambda tJ}{2}\leq m\leq J}
					\#\left(\Z\cap\left[J-\frac{tJ^2}{m},\; \frac{tJ^2}{m}-J\right]\cap[-J,J]\right) 
					+O(J).
				\end{split}
			\end{equation*}
			Since $tJ^2/m \leq 2J$ when $m\geq tJ/2$, we split into the cases when $\lambda\leq1$, $1\leq\lambda\leq 2/t$, and $\lambda\geq 2/t$. When $\lambda\leq 1$,
			\begin{align*}
				\Sigma_0 &= \sum_{\frac{\lambda t J}{2} \leq m \leq \frac{tJ}{2}} 2J + 2\sum_{\frac{tJ}{2}\leq m\leq J} \left(\frac{tJ^2}{m}-J\right) + O(J) \\
				&= tJ^2(1-\lambda) + 2tJ^2\log\left(\frac{2}{t}\right) - 2J^2\left(1-\frac{t}{2}\right) + O(J).
			\end{align*}
			When $\lambda\geq 1$,
			\begin{align*}
				\Sigma_0 &= 2\sum_{\frac{\lambda tJ}{2}\leq m\leq J} 
				\left(\frac{tJ^2}{m}-J\right) + O(J) \\
				&= 2tJ^2\log\left(\frac{2}{\lambda t}\right) - 2J^2\left(1-\frac{\lambda t}{2}\right) + O(J).
			\end{align*}
			When $\lambda\geq2/t$, we simply have $\Sigma_0 = O(J)$.
			
			\subsection{Putting it all together}\label{SS:tBetween1And2Final}
			Firstly, since each $\Sigma_j = O(J^2)$,
			\begin{equation*}
				G_{t,J}(\lambda) = \frac{1}{2J^2+O(J)}(\Sigma_1+\Sigma_0+\Sigma_{-1}) = \frac{1}{2J^2}(\Sigma_1+\Sigma_0+\Sigma_{-1}) + O(J).
			\end{equation*}
			Therefore, summing our results from the previous 3 subsections, keeping in mind that
			\begin{equation*}
				C_{\lambda, t} = \begin{cases}
					\frac t2, &\text{if } \lambda\leq 1; \\[6pt]
					\frac{ \lambda t}{2}, &\text{if } \lambda \geq 1;
				\end{cases}
			\end{equation*}
			we find
			\begin{equation*}
				G_{t,J}(\lambda) = \begin{cases}
					1+\frac{\lambda t}{2} + \lambda t\log\left(\frac{t}{2}\right) - \frac{\lambda t^2}{2} + O\left(J^{-1/4}\right), &\text{if } 0\leq\lambda\leq 1; \\[6pt]
					1-\frac{\lambda t}{2} + \lambda t\log\left(\frac{\lambda t}{2}\right) - \frac{\lambda t^2}{2}+t + O\left(J^{-1/4}\right), &\text{if } 1\leq\lambda\leq\frac{2}{t}; \\[6pt]
					O\left(J^{-1/4}\right), &\text{if } \lambda\geq\frac{2}{t}.
				\end{cases}
			\end{equation*}
			Finally, taking $J\to\infty$, we get the second part of Theorem~\ref{T:GLambdaClosedForm}.
			
			\section{\texorpdfstring{All values of $t$}{All values of t}}\label{S:AllValuesOft}
			
			We now wish to prove the existence of $G_t(\lambda)$ for all values of $t>0$, and in particular show that it is given by the expression in Theorem~\ref{T:GLambdaAllt}. Firstly, for any $t$, up to how many lines away can we get interference? Consider the line passing through $(-J,m)$ and $(J,m+h)$,
			\begin{equation*}
				x=\frac{2J}{h}(y-m)-J.
			\end{equation*}
			This intersects the $x$-axis at $(-2Jm/h-J,0)$. Since $m$ can be as large as $J$, we get interference from~$h$ lines away when
			\begin{equation*}
				t \leq \frac{2}{h}+\frac{1}{J}.
			\end{equation*}
			In other words, for any $t$ satisfying
			\begin{equation*}
				\frac{2}{h+1} < t \leq \frac{2}{h},
			\end{equation*}
			we can take $J$ sufficiently large such that we get interference from up to $h:=\floor{2/t}$ lines (but no more) away.
			For any point $Q=(a,m)$, the next point seen by $P_{t,J}$ could be on lines $y=m-k$ to $y=m+r$ (but no more), where $k,r \leq h$ (the exact values of $k$ and $r$ depend on the point $(a,m)$). Let $\tilde{Q}_j$ be the projection of the point $Q$ onto the line $y=m+j$. 
			Then, the distance between $\tilde{Q}_j$ 
			and the first integer point to its left will be the fractional part
			\begin{equation*}
				\Big\{j\Big(\sdfrac{tJ^2+a}{m}\Big)\Big\}.
			\end{equation*}
			So, we want to find points that satisfy
			\begin{equation*}
				\min\bigg\{\sdfrac{m}{t^2J^4}
				+O\Big(\sdfrac{1}{J^4}\Big),\,
				\min_{\substack{-k\leq j \leq r \\ j\neq 0}} 
				\Big\{j\Big\{\sdfrac{tJ^2+a}{m}\Big\}\sdfrac{m+j}{t^2 J^4}
				+O\Big(\sdfrac{1}{J^4}\Big)\Big\}\bigg\} 
				\geq \frac{\lambda}{2tJ^3} 
				+ O\Big(\sdfrac{1}{J^4}\Big).
			\end{equation*}
			
			Similar to \eqref{E:SimplifyMinsAngles}, we see that this is equivalent to saying that
			\begin{equation*}
				\min_{\substack{-k\leq j\leq r \\ j\neq 0}} 
				\Big\{j\Big(\sdfrac{tJ^2+a}{m}\Big)\Big\} \geq \sdfrac{\lambda t J}{2m} + O\left(\sdfrac{1}{J}\right),
			\end{equation*}
			where it is understood that this is a minimum over fractional parts.
			We will split the rectangle up into regions corresponding to pairs $(k,r)$ where in such regions we can have interference from lines $y=m-k$ to $y=m+r$ but no more (see Figure~\ref{krRegions}). Firstly, a point $(a,m)$ gets interference from the line $y=m+j$ when
			\begin{equation*}
				a\leq J-\frac{j}{m}tJ^2,
			\end{equation*}
			so there is interference from up to line $y=m+r$, but not line $y=m+r+1$ or above when
			\begin{equation*}
				J-\frac{(r+1)}{m}tJ^2 \leq a \leq J-\frac{r}{m}tJ^2.
			\end{equation*}
			Similarly, one finds that we have interference from down to line $y=m-k$, but not line $y=m-k-1$ or below when
			\begin{equation*}
				\frac{k}{m}tJ^2-J\leq a\leq \frac{(k+1)}{m}tJ^2 - J.
			\end{equation*}
			Therefore, for any $m$, the point $(a,m)$ will have interference only between lines $y=m-k$ and $y=m+r$ when
			\begin{equation*}
				\max\left\{\frac{k}{m}tJ^2-J, J-\frac{(r+1)}{m}tJ^2\right\}
				\leq a
				\leq \min\left\{\frac{(k+1)}{m}tJ^2-J, J-\frac{r}{m}tJ^2\right\}.
			\end{equation*}
			
			Now, to find an inequality for $m$, we just check for which values of $m$ the left side of the above inequality is less than the right side. We have for all $m$ that
			\begin{equation*}
				\begin{split}
					\frac{k}{m}tJ^2-J &\leq \frac{(k+1)}{m}tJ^2 - J, \\[6pt]
					J-\frac{(r+1)}{m}tJ^2 &\leq J-\frac{r}{m}tJ^2,
				\end{split}
			\end{equation*}
			so we need only verify
			\begin{equation*}
				\begin{split}
					\frac{k}{m}tJ^2 - J &\leq J-\frac{r}{m}tJ^2, \\[6pt]
					J-\frac{(r+1)}{m}tJ^2 &\leq \frac{(k+1)}{m}tJ^2 - J.
				\end{split}
			\end{equation*}
			We see from these two inequalities that we want
			\begin{equation*}
				(k+r)\frac{tJ}{2} \leq m \leq (k+r+2)\frac{tJ}{2}.
			\end{equation*}
			We further restrict $m$ by requiring $m\leq J$ so that we stay inside the rectangle, and $m\geq \frac{\lambda tJ}{2}$, otherwise all angles on the line $y=m$ will be too small. So, if we define
			\begin{equation*}
				M_{k,r}^- := \max\left\{\sdfrac{\lambda t}{2},\, \sdfrac{(k+r)t}{2}\right\}, 
				\quad
				M_{k,r}^+ := \min\left\{1,\, \sdfrac{(k+r+2)t}{2}\right\};
			\end{equation*}
			\begin{equation*}
				R_{k,r,m} := \max\left\{\sdfrac{ktJ^2}{m^2}-\sdfrac{J}{m},\, \sdfrac{J}{m}-\sdfrac{(r+1)tJ^2}{m^2}\right\},\quad
				S_{k,r,m} := \min\left\{\sdfrac{(k+1)tJ^2}{m^2}-\frac{J}{m},\, \sdfrac{J}{m}-\sdfrac{rtJ^2}{m^2}\right\};
			\end{equation*}
			and
			\begin{equation*}
				x_m := \sdfrac{tJ^2}{m}, \quad
				y_m := \sdfrac{\lambda tJ}{2m},
			\end{equation*}
			then $(2J^2+O(J))G_{t,J}(\lambda)$ is
			\begin{equation*}
				\begin{split}
					=\sum_{\substack{0\leq k,r\leq h\\ M_{k,r}^- \leq M_{k,r}^+}} \sum_{M_{k,r}^- J\leq m \leq M_{k,r}^+ J}  
					\#\bigg\{\sdfrac{a}{m}\in \sdfrac{\Z}{m}\cap[R_{k,r,m},\ S_{k,r,m}] 
					:
					\min_{\substack{-k\leq j \leq r \\ j\neq 0}} \left\{j\left(x_m + \sdfrac{a}{m}\right)\right\} \geq y_m\bigg\} + O(J). 
				\end{split}  
			\end{equation*}
			
			\begin{figure}[ht]
				\centering    
				\includegraphics[width=0.74\textwidth]{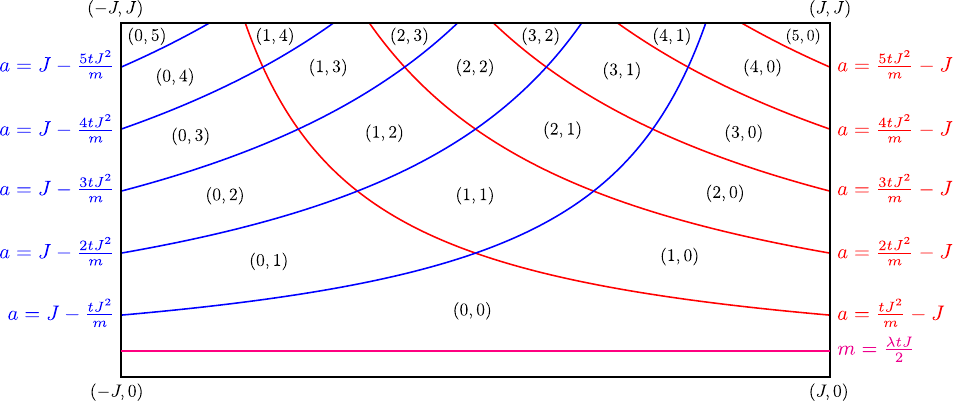}
				\caption{A diagram of the rectangle split into regions of interference for $t=0.35$. Inside each region the pair $(k,r)$ associated to it is written.}
				\label{krRegions}
			\end{figure}
			
			As we did when looking at the case $1<t\leq 2$, we break up our sum over $m$ into intervals of the form
			\begin{equation*}
				I_i := \left[\frac{iJ}{L},\, \frac{(i+1)J}{L}\right),
			\end{equation*}
			where $L=J^{1/4}$.
			Therefore, we now have that $(2J^2+O(J))G_{t,J}(\lambda)$ is
			\begin{equation*}
				\begin{split}
					=\sum_{\substack{0\leq k,r\leq h\\ M_{k,r}^- \leq M_{k,r}^+}} 
					\sum_{M_{k,r}^- L\leq i \leq M_{k,r}^+ L} \sum_{m\in I_i}  
					\#\bigg\{\sdfrac{a}{m}\in \sdfrac{\Z}{m}\cap[R_{k,r,m},\, S_{k,r,m}] 
					: 
					\min_{\substack{-k\leq j \leq r \\ j\neq 0}} 
					\left\{j\left(x_m + \sdfrac{a}{m}\right)\right\} \geq y_m\bigg\}
					+ O\Big(\fdfrac{J^2}{L}\Big).  
				\end{split}
			\end{equation*}
			We now re-index $m$, $y$, $R$, and $S$ by $i/L$ instead of $m$ as
			\begin{gather*}
				m=\sdfrac{iJ}{L}\left(1+O\left(\fdfrac{1}{L}\right)\right), \\
				y_{\frac{i}{L}} := \sdfrac{\lambda tL}{2i} 
				= y_{m}+O\left(\fdfrac{1}{L}\right), \\
				R_{k,r,\frac{i}{L}} := \max\left\{\sdfrac{ktL^2}{i^2} - \sdfrac{L}{i},\; 
				\sdfrac{L}{i}-\sdfrac{(r+1)tL^2}{i^2}\right\} 
				= R_{k,r,m}+O\left(\fdfrac{1}{L}\right), \\
				S_{k,r,\frac{i}{L}} := \min\left\{\sdfrac{(k+1)tL^2}{i^2}
				-\sdfrac{L}{i},\; \sdfrac{L}{i}-\sdfrac{rtL^2}{i^2}\right\}
				= S_{k,r,m}+O\left(\fdfrac{1}{L}\right).
			\end{gather*}
			We first replace the count of a discrete set in our previous expression for $G_{t,J}(\lambda)$ with the measure of a continuous set to find
			that $(2J^2+O(J))G_{t,J}(\lambda)$ is
			\begin{equation*}
				\begin{split}
					=\sum_{\substack{0\leq k,r\leq h\\ M_{k,r}^- \leq M_{k,r}^+}} 
					\sum_{M_{k,r}^- L\leq i \leq M_{k,r}^+ L} \sum_{m\in I_i} 
					m\cdot\mu\Big(\Big\{u\in [R_{k,r,m},\ S_{k,r,m}] :
					\min_{\substack{-k\leq j \leq r \\ j\neq 0}} \left\{j\left(x_m + u\right)\right\} \geq y_m\Big\}\Big)
					+ O\left(\fdfrac{J^2}{L}\right).
				\end{split}
			\end{equation*}
			We now use our re-indexing of $m$, $y$, $R$, and $S$ to see that the above becomes
			\begin{equation*}
				\begin{split}
					= &\sum_{\substack{0\leq k,r\leq h\\ M_{k,r}^- \leq M_{k,r}^+}} 
					\sum_{M_{k,r}^- L\leq i \leq M_{k,r}^+ L}
					\fdfrac{iJ}{L}\Big(1+O\Big(\fdfrac{1}{L}\Big)\Big)
					\sum_{m\in I_i} \\
					&\quad
					\mu\bigg(\Big\{u\in \left[R_{k,r,\frac{i}{L}}+O\left(\fdfrac{1}{L}\right),\, S_{k,r,\frac{i}{L}}+O\left(\fdfrac{1}{L}\right)\right] 
					:
					\min_{\substack{-k\leq j \leq r \\ j\neq 0}} 
					\big\{j\left(x_m + u\right)\big\} \geq y_{\frac{i}{L}}
					+O\left(\fdfrac{1}{L}\right)\Big\}\bigg) 
					+ O\Big(\fdfrac{J^2}{L}\Big).
				\end{split}
			\end{equation*}
			In the end, we sort out the error terms to obtain that 
			$(2J^2+O(J))G_{t,J}(\lambda)$ is
			\begin{equation*}
				\begin{split}
					&= \sum_{\substack{0\leq k,r\leq h\\ M_{k,r}^- \leq M_{k,r}^+}} 
					\sum_{M_{k,r}^- L\leq i \leq M_{k,r}^+ L}
					\sdfrac{iJ}{L}\sum_{m\in I_i}  
					\mu\bigg(\Big\{
					u\in \big[R_{k,r,\frac{i}{L}},\ S_{k,r,\frac{i}{L}}\big] 
					:
					\min_{\substack{-k\leq j \leq r \\ j\neq 0}} 
					\big\{j\left(x_m + u\right)\big\} \geq y_{\ldfrac{i}{L}}\Big\}\bigg) 
					+ O\Big(\fdfrac{J^2}{L}\Big) \\
					&=\sum_{\substack{0\leq k,r\leq h\\ M_{k,r}^- \leq M_{k,r}^+}}
					\sum_{M_{k,r}^- L\leq i \leq M_{k,r}^+ L}
					\sdfrac{iJ}{L}\sum_{m\in I_i}
					\mu\big(A_{k,r,\frac{i}{L}}(x_m)\big)
					+O\Big(\fdfrac{J^2}{L}\Big),
				\end{split}
			\end{equation*}
			where
			\begin{equation*}
				A_{k,r,\frac{i}{L}}(x) 
				:= \Big\{z\in \big[R_{k,r,\frac{i}{L}}+x,\ S_{k,r,\frac{i}{L}}+x\big] 
				:
				\min_{\substack{-k\leq j \leq r \\ j\neq 0}} \left\{jz\right\} \geq y_{\frac{i}{L}}\Big\}.
			\end{equation*}
			We handle the inner sum using the following lemma.
			\begin{lemma}\label{L:AlltInnerSum}
				Let
				\begin{equation*}
					A_{k,r,R,S,y}(x) 
					:= \Big\{z\in [R+x,\ S+x] 
					:
					\min_{\substack{-k\leq j \leq r \\ j\neq 0}} \left\{jz\right\} \geq y\Big\}.
				\end{equation*}
				Then,
				\begin{equation*}
					\abs{\sum_{m\in I_i} \mu\big(A_{k,r,R,S,y}(x_m)\big) - 
						\#I_i \int_0^1 \mu\big(A_{k,r,R,S,y}(x)\big)\,\dv x} 
					\ll \frac{J^{2/3}}{L} + J^{1/2}.
				\end{equation*}
			\end{lemma}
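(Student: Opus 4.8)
The plan is to reduce this to the machinery already set up for Lemma~\ref{L:Sigma1InnerSum}. The sequence being sampled is still $x_m = tJ^2/m$ for $m\in I_i$, exactly as in the single-line case; only the test function changes, from $\mu\big(A_{R,S,y}(\cdot)\big)$ to the multi-line quantity $F(x) := \mu\big(A_{k,r,R,S,y}(x)\big)$. So it suffices to check that $F$ is $1$-periodic and of bounded variation on $[0,1]$, with variation bounded independently of $R$, $S$, $y$ (and of $k,r$, which are in any case at most $h=\lfloor 2/t\rfloor$), and then to invoke the Koksma-type inequality of Lemma~\ref{L:MontgomerySumIntegral} together with the discrepancy bound \eqref{E:DiscrepancyBound}, which was established in the proof of Lemma~\ref{L:Sigma1InnerSum} for precisely this sequence $tJ^2/m$ and which involves neither $k$ nor $r$.

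To carry out the first step, set $G := \big\{ z\in\mathbb{R} : \min_{-k\leq j\leq r,\, j\neq 0}\{jz\}\geq y\big\}$. This is a fixed subset of $\mathbb{R}$ which is a union of at most $O(k+r)$ intervals per unit length and which is $1$-periodic, since each $j$ is an integer and hence $\{j(z+1)\}=\{jz\}$. Then $F(x) = \mu\big([R+x,S+x]\cap G\big) = \Phi(S+x) - \Phi(R+x)$, where $\Phi(t):=\int_0^t \chi_G(u)\,\dv u$. Periodicity of $G$ and translation invariance of Lebesgue measure give $F(x+1)=F(x)$; moreover $F$ is absolutely continuous with $F'(x) = \chi_G(S+x) - \chi_G(R+x) \in\{-1,0,1\}$ for almost every $x$, so $F$ is Lipschitz with constant $1$, hence continuous everywhere, and $\Var_{[0,1]}(F) = \int_0^1 |F'(x)|\,\dv x \leq 1$.

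For the second step, apply Lemma~\ref{L:MontgomerySumIntegral} with the sequence $(\{x_m\})_{m\in I_i}$, which has $N=\#I_i = J/L + O(1)$ terms, and with the function $F$, which is continuous at every $x_m$. This gives
\begin{equation*}
\Bigl|\sum_{m\in I_i} \mu\big(A_{k,r,R,S,y}(x_m)\big) - \#I_i\int_0^1 \mu\big(A_{k,r,R,S,y}(x)\big)\,\dv x\Bigr| \leq \frac{1}{2}\,D(N)\,\Var_{[0,1]}(F) \leq \frac{1}{2}\,D(N).
\end{equation*}
The discrepancy $D(N)$ of $(\{tJ^2/m\})_{m\in I_i}$ was bounded in \eqref{E:DiscrepancyBound} — via Lemma~\ref{L:ErdosTuran} and the exponential-sum estimate coming from Lemma~\ref{L:MontgomeryBoundExpSum} — by $D(N)\ll J^{2/3}/L + J^{1/2}$, which yields the claimed bound.

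The only genuinely new point, and the one deserving care, is the bounded-variation estimate for $F$: one must be sure that passing from interference by a single line to interference by up to $h$ lines does not inflate the variation. The computation above shows that it does not, because $F$ is the integral of a $\{0,1\}$-valued function over a sliding window of fixed length, so $|F'|\leq 1$ no matter how intricate the set $G$ is; in fact the resulting bound $\tfrac12 D(N)$ is uniform in $R$, $S$, $y$, $k$, $r$. Everything else — the discrepancy estimate \eqref{E:DiscrepancyBound} and the Koksma inequality — is reused verbatim from the proof of Lemma~\ref{L:Sigma1InnerSum}.
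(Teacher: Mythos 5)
Your proposal is correct and follows essentially the same route as the paper: apply the Koksma-type inequality of Lemma~\ref{L:MontgomerySumIntegral} to the sequence $x_m=tJ^2/m$, reuse the discrepancy bound \eqref{E:DiscrepancyBound} from the proof of Lemma~\ref{L:Sigma1InnerSum}, and note that the test function has bounded variation. The only difference is that the paper merely asserts $\Var_{[0,1]}\big(\mu(A_{k,r,R,S,y}(x))\big)=O(1)$, whereas you supply the clean justification via $F(x)=\Phi(S+x)-\Phi(R+x)$ with $|F'|\leq 1$, which is a welcome (and correct) addition.
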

			From Lemma~\ref{L:AlltInnerSum}, we find
			that $(2J^2+O(J))G_{t,J}(\lambda)$ is
			\begin{equation*}
				\begin{split}
					&= \sum_{\substack{0\leq k,r\leq h\\ M_{k,r}^- \leq M_{k,r}^+}}
					\sum_{M_{k,r}^- L\leq i \leq M_{k,r}^+ L}
					\sdfrac{iJ}{L}\Big(\sdfrac{J}{L}+O(1)\Big)
					\int_0^1 \mu\big(A_{k,r,\frac{i}{L}}(x)\big)\,\dv x 
					+O\left(\sdfrac{J^2}{L}+J^{5/3}+J^{3/2}L\right) \\
					&= \sdfrac{J^2}{L}
					\sum_{\substack{0\leq k,r\leq h\\ M_{k,r}^- \leq M_{k,r}^+}}
					\sum_{M_{k,r}^- L\leq i \leq M_{k,r}^+ L}\frac{i}{L}\int_0^1 \mu\big(A_{k,r,\frac{i}{L}}(x)\big)\,\dv x 
					+O\left(\sdfrac{J^2}{L}+J^{5/3}+J^{3/2}L\right).
				\end{split}
			\end{equation*}
			We now deal with the next sum using the following technical lemma.
			\begin{lemma}\label{L:AlltOuterSum}
				We have
				\begin{equation*}
						\Bigg| 
						\sum_{M_{k,r}^-L\leq i\leq M_{k,r}^+L} \frac{i}{L}
						\int_0^1 \mu\big(A_{k,r,\frac{i}{L}}(x)\big)\,\dv x - L \int_{M_{k,r}^-}^{M_{k,r}^+} w\int_0^1 
						\mu\big(A_{k,r,w}(x)\big)\,\dv x\dv w 
						\Bigg|
					\ll 1.
				\end{equation*}
			\end{lemma}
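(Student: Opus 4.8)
The plan is to reproduce the argument used for Lemma~\ref{L:Sigma1OuterSum}: rewrite the left-hand sum as $\sum_i \phi(i/L)$, where
\[ \phi(w):=w\int_0^1\mu\big(A_{k,r,w}(x)\big)\,\dv x, \]
recognise this as sampling $\phi$ along an essentially equispaced sequence in $[M_{k,r}^-,M_{k,r}^+]$, and then apply Montgomery's comparison inequality (Lemma~\ref{L:MontgomerySumIntegral}) once $\phi$ is known to be of bounded variation there. The heart of the matter is to compute $\int_0^1\mu(A_{k,r,w}(x))\,\dv x$ explicitly enough to read off that property; everything else is bookkeeping copied from the proof of Lemma~\ref{L:Sigma1OuterSum}. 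I will assume $M_{k,r}^-<M_{k,r}^+$, the degenerate case $M_{k,r}^-=M_{k,r}^+$ being trivial (the sum then has at most one term and the integral vanishes).

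\textbf{Step 1: an explicit formula for the inner integral.} I would first generalise Lemma~\ref{L:Sigma1Integral}. Set
\[ E_{k,r,y}:=\Big\{z\in\R:\min_{\substack{-k\le j\le r\\ j\ne 0}}\{jz\}\ge y\Big\}, \]
which is $1$-periodic (each $j$ is an integer) and which, intersected with $[0,1]$, is a finite union of intervals whose endpoints depend piecewise-linearly on $y\in[0,1]$; the number of its components, and the number of $y$-values at which the combinatorial pattern changes, are bounded in terms of $k$ and $r$ only. By definition $A_{k,r,w}(x)=[R_{k,r,w}+x,\ S_{k,r,w}+x]\cap E_{k,r,y_w}$ with $y_w=\lambda t/(2w)$, and on the range $w\in[M_{k,r}^-,M_{k,r}^+]$ one has $R_{k,r,w}\le S_{k,r,w}$ by the very definition of $M_{k,r}^{\pm}$. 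Substituting $z=u+x$ and using Tonelli exactly as in the proof of Lemma~\ref{L:Sigma1Integral}, then averaging the translated interval over one full period of $\chi_{E_{k,r,y_w}}$, gives the identity
\begin{equation}\label{E:IntegralEqualsIntervalMeasures}
	\int_0^1\mu\big(A_{k,r,w}(x)\big)\,\dv x=\big(S_{k,r,w}-R_{k,r,w}\big)\,\mu\big(E_{k,r,y_w}\cap[0,1]\big),
\end{equation}
valid for all $w\in[M_{k,r}^-,M_{k,r}^+]$.

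\textbf{Step 2: $\phi$ is continuous and of bounded variation.} By \eqref{E:IntegralEqualsIntervalMeasures}, $\phi(w)=w\big(S_{k,r,w}-R_{k,r,w}\big)\,\mu\big(E_{k,r,y_w}\cap[0,1]\big)$. The factor $w\big(S_{k,r,w}-R_{k,r,w}\big)$ is, piecewise on $[M_{k,r}^-,M_{k,r}^+]$, a linear combination with real coefficients of $1/w$ and $1$ (since $R_{k,r,w}$ and $S_{k,r,w}$ are a maximum, respectively a minimum, of two terms of the shape $ct/w^2-1/w$), hence smooth away from the finitely many points where the $\max$/$\min$ switches, and therefore of bounded variation on this compact interval, which lies in $(0,\infty)$ because $M_{k,r}^-\ge\lambda t/2>0$. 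The factor $y\mapsto\mu(E_{k,r,y}\cap[0,1])$ is continuous, non-increasing and piecewise-linear on $[0,1]$, so its variation there is at most $1$; composing with the smooth monotone map $w\mapsto y_w$ (which sends $[M_{k,r}^-,M_{k,r}^+]$ into $[0,1]$) keeps it of bounded variation with the same bound. Since a product of bounded functions of bounded variation is of bounded variation, $\phi$ is of bounded variation on $[M_{k,r}^-,M_{k,r}^+]$, with $\Var_{[M_{k,r}^-,M_{k,r}^+]}(\phi)\ll 1$ uniformly over the finitely many pairs $(k,r)$ with $0\le k,r\le h=\floor{2/t}$; the same description shows $\phi$ is continuous there.

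\textbf{Step 3: Montgomery's inequality and conclusion.} Exactly as in the proof of Lemma~\ref{L:Sigma1OuterSum}, put $u_i:=\big(i/L-M_{k,r}^-\big)/\big(M_{k,r}^+-M_{k,r}^-\big)$ for $i\in\Z\cap[M_{k,r}^-L,\ M_{k,r}^+L]$ and $\psi(v):=\phi\big(v(M_{k,r}^+-M_{k,r}^-)+M_{k,r}^-\big)$, so that
\[ \sum_{M_{k,r}^-L\le i\le M_{k,r}^+L}\phi\!\left(\tfrac{i}{L}\right)=\sum_{0\le u_i\le 1}\psi(u_i). \]
The $u_i$ form an equispaced sequence in $[0,1]$ of cardinality $N=(M_{k,r}^+-M_{k,r}^-)L+O(1)$, whose discrepancy $D(N)$ is $O(1)$; since $\psi$ is bounded, $N\int_0^1\psi(v)\,\dv v=L\int_{M_{k,r}^-}^{M_{k,r}^+}\phi(w)\,\dv w+O(1)$. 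As $\psi$ is of bounded variation and continuous at every $u_i$, Lemma~\ref{L:MontgomerySumIntegral} gives
\[ \Bigl|\sum_{0\le u_i\le 1}\psi(u_i)-N\int_0^1\psi(v)\,\dv v\Bigr|\le\tfrac12 D(N)\,\Var_{[0,1]}(\psi)=\tfrac12 D(N)\,\Var_{[M_{k,r}^-,M_{k,r}^+]}(\phi)\ll 1, \]
and combining the last three displays yields the claimed bound. The only genuine obstacle is Steps~1--2: one must obtain the closed form \eqref{E:IntegralEqualsIntervalMeasures} and then unpack it far enough to see that $\phi$ is of bounded variation with a bound uniform in $(k,r)$; once that is in hand, Step~3 is a verbatim repetition of the corresponding part of the proof of Lemma~\ref{L:Sigma1OuterSum}.
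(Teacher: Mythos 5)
Your proof is correct and its skeleton --- rescale the sample points to $u_i\in[0,1]$, apply Montgomery's comparison inequality (Lemma~\ref{L:MontgomerySumIntegral}), and reduce everything to showing $\Var_{[M_{k,r}^-,M_{k,r}^+]}(f)\ll 1$ for $f(w)=w\int_0^1\mu(A_{k,r,w}(x))\,\dv x$ --- is exactly the paper's. The one genuine difference is how you reach the identity $\int_0^1\mu(A_{k,r,w}(x))\,\dv x=(S_{k,r,w}-R_{k,r,w})\,\mu(E_{k,r,y_w}\cap[0,1])$: you get it in one line by Tonelli plus the $1$-periodicity of the constraint set, generalizing Lemma~\ref{L:Sigma1Integral}, whereas the paper splits $[R_{k,r,w}+x,\,S_{k,r,w}+x]$ into its two fractional end-pieces plus an integer number of full periods, evaluates each of the three resulting $x$-integrals by explicit case analysis, and only then observes the cancellation producing the same formula (their \eqref{E:IntegralEqualsIntervalMeasures}). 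Your route is shorter and cleaner for the lemma itself; the paper's longer computation is not wasted, though, because the explicit intervals $\cI_{j,l}$ and the formulas \eqref{E:MeasureAHatIntervalPlus}--\eqref{E:MeasureAHatIntervalMinus} it generates are precisely what Section~\ref{SS:ShapeOfGtLambda} reuses to derive the piecewise closed form of $G_t(\lambda)$ in Theorem~\ref{T:GLambdaAlltConstants} (and what Corollary~\ref{C:GLambda23t1} is computed from). Your bounded-variation argument --- a product of a piecewise-smooth factor $w(S_{k,r,w}-R_{k,r,w})$ with a monotone reparametrization of the continuous, non-increasing, piecewise-linear function $y\mapsto\mu(E_{k,r,y}\cap[0,1])$ --- is likewise a mild variant of the paper's, which instead writes $f$ piecewise as $\alpha'_{0,j}+\alpha'_{1,j}/w+\alpha'_{2,j}/w^2$ on finitely many subintervals and integrates $\abs{f'}$; both give $O(1)$ because the number of breakpoints is bounded in terms of $h$, $\lambda$ and $t$.
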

			From Lemma~\ref{L:AlltOuterSum}, we obtain 
			that $(2J^2+O(J))G_{t,J}(\lambda)$ is
			\begin{equation*}
				\begin{split}
					= J^2\sum_{\substack{0\leq k,r\leq h\\ M_{k,r}^- \leq M_{k,r}^+}} 
					\int_{M_{k,r}^-}^{M_{k,r}^+} w\int_0^1 
					\mu\big(A_{k,r,w}(x)\big)\,\dv x \dv w 
					+O\left(\sdfrac{J^2}{L}+J^{5/3}+J^{3/2}L\right).    
				\end{split}
			\end{equation*}
			Since $L=J^{1/4}$, dividing both sides by $2J^2+O(J)$ yields
			\begin{equation*}
				G_{t,J}(\lambda) = \frac{1}{2}\sum_{\substack{0\leq k,r\leq h\\ M_{k,r}^- \leq M_{k,r}^+}} \int_{M_{k,r}^-}^{M_{k,r}^+} w\int_0^1 
				\mu\big(A_{k,r,w}(x)\big)\,\dv x \dv w + O\big(J^{-1/4}\big).
			\end{equation*}
			Finally, taking $J\to\infty$ we obtain Theorem~\ref{T:GLambdaAllt}.
			
			\begin{proof}[Proof of Lemma~\ref{L:AlltInnerSum}]
				From Lemma~\ref{L:MontgomerySumIntegral}, we see that
				\begin{equation*}
					\abs{\sum_{m\in I_i} \mu\big(A_{k,r,R,S,y}(x_m)\big) - \#I_i\int_0^1 \mu\big(A_{k,r,R,S,y}(x)\big)\,\dv x} \ll D(\#I_i)
					\Var_{[0,1]}\big(\mu(A_{k,r,R,S,y}(x)\big),
				\end{equation*}
				where $D(\#I_i)$ is the discrepancy of the sequence $\{x_m\}_{m\in I_i}$. This is exactly the same as the discrepancy in the proof of Lemma~\ref{L:Sigma1InnerSum}, that is,
				\begin{equation*}
					D(\#I_i) \ll \frac{J^{2/3}}{L} + J^{1/2}.
				\end{equation*}
				Furthermore, the total variation $\Var_{[0,1]}(\mu(A_{k,r,R,S,y}(x))=O(1)$, and consequently we obtain the desired upper bound on the error.
			\end{proof}
			
			\begin{proof}[Proof of Lemma~\ref{L:AlltOuterSum}]
				We again utilize Lemma~\ref{L:MontgomerySumIntegral}. 
				Write
				\begin{align*}
						f(w) & = w\int_0^1 \mu\big(A_{k,r,w}(x)\big)\,\dv x, \\
						\intertext{and}
						u_i &= \frac{\frac{i}{L}-M_{k,r}^-}{M_{k,r}^+-M_{k,r}^-}.
				\end{align*}   
				Then, we have
				\begin{equation*}
					\begin{split}
						&\phantom{=}\ 
							\Bigg|
							\sum_{M_{k,r}^-L\leq i \leq M_{k,r}^+L} f\left(\frac{i}{L}\right) - L\int_{M_{k,r}^-}^{M_{k,r}^+} f(w)\,\dv w
						\Bigg|
						\\
						&=
							\Bigg|\sum_{0\leq u_i \leq 1} f\left(u_i(M_{k,r}^+-M_{k,r}^-)+M_{k,r}^-\right) 
							- L\big(M_{k,r}^+-M_{k,r}^-\big)\int_0^1 
							f\left(v(M_{k,r}^+-M_{k,r}^-)+M_{k,r}^-\right)\,\dv v
						\Bigg|
						\\
						&= \Bigg|\sum_{0\leq u_i \leq 1} f\left(u_i(M_{k,r}^+-M_{k,r}^-)+M_{k,r}^-\right) \\
						&\phantom{=}\  - \#\big\{i\in\Z \colon M_{k,r}^-L\leq i< M_{k,r}^+L\big\}
						\int_0^1 f\left(v(M_{k,r}^+-M_{k,r}^-)+M_{k,r}^-\right)\,\dv v \Bigg| + O(1) \\
						&\ll \Var_{[0,1]}
						\left(f\Big(v\big(M_{k,r}^+-M_{k,r}^-\big)+M_{k,r}^-\Big)\right) D(L) + 1 \\
						&= \Var_{[M_{k,r}^+,M_{k,r}^-]}(f) D(L) + 1,
					\end{split}
				\end{equation*}
				where $D(L)$ is the discrepancy of the sequence $\{u_i\}\subseteq[0,1]$ (for $i\in\Z\cap[M_{k,r}^-L, M_{k,r}^+L]$). Since the discrepancy is $O(1)$, it remains to show the total variation of $f$ in $[M_{k,r}^-, M_{k,r}^+]$ is also $O(1)$.
				
				Recall
				\begin{equation*}
					A_{k,r,w}(x) := \big\{z\in\big[R_{k,r,w}+x,S_{k,r,w}+x\big] 
					:
					F_{k,r}(z)\geq y_w\big\},
				\end{equation*}
				where
				\begin{equation}\label{E:RecallRS} 
					R_{k,r,w} = \max\left\{\sdfrac{kt}{w^2}-\sdfrac{1}{w},\, \sdfrac{1}{w}-\sdfrac{(r+1)t}{w^2}\right\},\quad
					S_{k,r,w} = \min\left\{\sdfrac{(k+1)t}{w^2}-\sdfrac{1}{w},\, \sdfrac{1}{w}-\frac{rt}{w^2}\right\},    
				\end{equation}
				and
				\begin{equation*}
					F_{k,r}(z) = \min_{\substack{-k\leq j\leq r \\ j\neq 0}} \{jz\}, \quad
					y_w = \frac{\lambda t}{2w}.
				\end{equation*}
				Since $F_{k,r}$ is 1-periodic, we see that
				\begin{align*}
					\mu(A_{k,r,w}(x)) &= \mu\Big(A_{k,r,w}(x) \cap \big[R_{k,r,w}+x, \ceiling{R_{k,r,w}+x}\big]\Big) \\
					&\phantom{=} + \mu \Big(A_{k,r,w}(x) \cap \big[\ceiling{R_{k,r,w}+x},\, \floor{S_{k,r,w}+x}\big]\Big) \\ 
					&\phantom{=} + \mu \Big(A_{k,r,w}(x) \cap \big[\floor{S_{k,r,w}+x},\, S_{k,r,w}+x]\big]\Big) \\[3pt]
					&= \mu\big(A_{k,r,w}(x) \cap \big[\{R_{k,r,w}+x\},\, 1\big]\big) \\ 
					&\phantom{=}+ \big(\floor{S_{k,r,w}+x}-\ceiling{R_{k,r,w}+x}\big)
					\mu\big(A_{k,r,w}(x) \cap [0,1]\big) \\
					&\phantom{=}+ \mu\Big(A_{k,r,w}(x) \cap \big[0,\, \{S_{k,r,w}+x\}\big]\Big).
				\end{align*}
				Let's first study $\mu\big(A_{k,r,w}(x)\cap[0,1]\big)$. In other words, we want to find the measure of the set of points $z\in[0,1]$ such that $F_{k,r}(z)\geq y_w$. In order to do this, we wish to better understand the function $F_{k,r}(z)$. When $z=a/q$, where $a,q\in\Z$, we see that $\{\pm qz\}=0$, so at this point the minimum is attained when $j=\pm q$. The minimum only changes if $u=a/q$ where $\abs{q}\leq \max\{k,r\}\leq h$, or when $\{qz\}=\{q'z\}$ where $\abs{q},\abs{q'}\leq h$, that is when
				\begin{equation*}
					z=\frac{a}{q-q'},
				\end{equation*}
				where $a\in \Z$ and $\abs{q},\abs{q'}\leq h$. In other words, the minimum only changes when $z=a/q$ where $q\in\Z\cap[1,2h]$ and $a\in\Z\cap[0,q]$. So, we partition the interval $[0,1]$ by
				\begin{equation*}
					0 = \frac{a_0}{q_0} < \frac{a_1}{q_1} < \cdots < \frac{a_n}{q_n} < \frac{a_{n+1}}{q_{n+1}} = 1,
				\end{equation*}
				where $q_j \leq 2h$, and in each interval $[a_j/q_j, a_{j+1}/q_{j+1}]$ either $F_{k,r}(z) = q_jz-a_j$ or $F_{k,r}(z)= -q_{j+1}z+a_{j+1}$. In such an interval, if $F_{k,r}(z) = q_jz-a_j$, then
				\begin{equation}
					A_{k,r,w}(x)\cap\left[\frac{a_j}{q_j},\, \frac{a_{j+1}}{q_{j+1}}\right] 
					= \begin{cases}
						\left[\Ldfrac{\lambda t}{2wq_{j}} + \Ldfrac{a_j}{q_j},\  \Ldfrac{a_{j+1}}{q_{j+1}}\right], & \text{if }\  \Ldfrac{\lambda t}{2w} \leq \Ldfrac{q_j a_{j+1}}{q_{j+1}}-a_j, \\[9pt]
						\emptyset, & \text{if }\  \Ldfrac{\lambda t}{2w} \geq \Ldfrac{q_j a_{j+1}}{q_{j+1}} - a_j;
					\end{cases} \label{E:MeasureAHatIntervalPlus}
				\end{equation}
				whereas if $F_{k,r}(z) = -q_{j+1}z+a_{j+1}$, then
				\begin{equation}
					A_{k,r,w}(x)\cap\left[\frac{a_j}{q_j},\,\frac{a_{j+1}}{q_{j+1}}\right] 
					= \begin{cases}
						\left[\Ldfrac{a_j}{q_j},\, \Ldfrac{a_{j+1}}{q_{j+1}}-\Ldfrac{\lambda t}{2wq_{j+1}}\right], & \text{if }\ \Ldfrac{\lambda t}{2w} \leq -\Ldfrac{q_{j+1} a_j}{q_j}+a_{j+1}, \\[9pt]
						\emptyset, & \text{if }\ \Ldfrac{\lambda t}{2w} \geq -\Ldfrac{q_{j+1} a_j}{q_j}+a_{j+1}.
					\end{cases} \label{E:MeasureAHatIntervalMinus}
				\end{equation}
				
				\begin{figure}[ht]
					\centering    
					\includegraphics[width=0.74\textwidth]{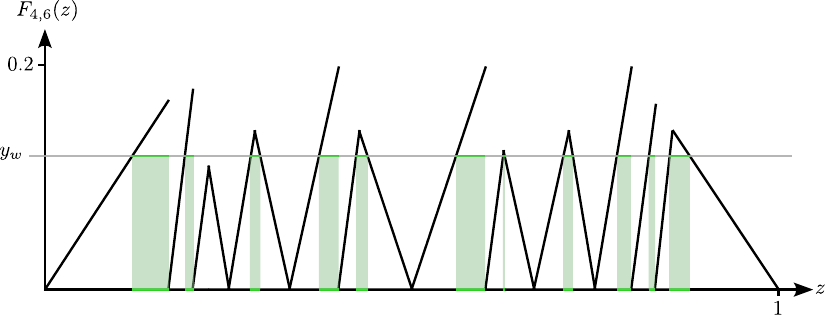}
					\caption{The graph of $F_{4,6}(z)$ for $0\leq z\leq 1$. The green segments represent the subset of $[0,1]$ where $F_{4,6}(z)\geq y_w$.}
					\label{F46z}
				\end{figure}
				
				Summing over all of these sub-intervals, we find that $\mu(A_{k,r,w}(x)\cap[0,1])$ can be written as a piece-wise function over a bounded (depending on $\lambda$ and $t$) number of intervals for $w$. Hence, we can partition the interval $[M_{k,r}^-,M_{k,r}^+]$ by
				\begin{equation*}
					M_{k,r}^- = \gamma_0 < \gamma_1 < \cdots < \gamma_N < \gamma_{N+1} = M_{k,r}^+,
				\end{equation*}
				such that for $w\in[\gamma_j, \gamma_{j+1}]$,
				\begin{equation*}
					\mu\big(A_{k,r,w}(x)\cap[0,1]\big) = \sum_{l=1}^{m_j} \mu(\cI_{j,l}),
				\end{equation*}
				where the $\cI_{j,l}$ are the non-empty intervals in \eqref{E:MeasureAHatIntervalPlus} and \eqref{E:MeasureAHatIntervalMinus}. Firstly, let's find
				\begin{equation*}
					\int_0^1 \big(\floor{S_{k,r,w}+x}-\ceiling{R_{k,r,w}+x}\big)
					\mu\left(\cI_{j,l}\right) \,\dv x.
				\end{equation*}
				If $\{S_{k,r,w}\} \geq \{R_{k,r,w}\}$, then
				\begin{equation*}
					\floor{S_{k,r,w}+x} - \ceiling{R_{k,r,w}+x} = \begin{cases}
						\floor{S_{k,r,w}}-\ceiling{R_{k,r,w}}, &\text{if } 0\leq x\leq 1-\{S_{k,r,w}\}, \\
						\floor{S_{k,r,w}}+1-\ceiling{R_{k,r,w}}, &\text{if } 1-\{S_{k,r,w}\}\leq x \leq 1-\{R_{k,r,w}\}, \\
						\floor{S_{k,r,w}} - \ceiling{R_{k,r,w}}, &\text{if } 1-\{R_{k,r,w}\} \leq x \leq 1.
					\end{cases}
				\end{equation*} 
				In this case, 
				\begin{align*}
					\int_0^1 \big(\floor{S_{k,r,w}+x}-\ceiling{R_{k,r,w}+x}\big)
					\mu\left(\cI_{j,l}\right) \,\dv x
					&= \mu\left(\cI_{j,l}\right)
					\big(\floor{S_{k,r,w}}-\ceiling{R_{k,r,w}}\big) + \mu\left(\cI_{j,l}\right)\big(\{S_{k,r,w}\}-\{R_{k,r,w}\}\big) \\
					&= \mu\left(\cI_{j,l}\right)\big(S_{k,r,w}-R_{k,r,w}-1\big).
				\end{align*}
				If, on the other hand, $\{R_{k,r,w}\} \geq \{S_{k,r,w}\}$, then
				\begin{equation*}
					\floor{S_{k,r,w}+x} - \ceiling{R_{k,r,w}+x} = \begin{cases}
						\floor{S_{k,r,w}}-\ceiling{R_{k,r,w}}, &\text{if } 0\leq x\leq 1-\{R_{k,r,w}\}, \\
						\floor{S_{k,r,w}}-1-\ceiling{R_{k,r,w}}, &\text{if } 1-\{R_{k,r,w}\}\leq x \leq 1-\{S_{k,r,w}\}, \\
						\floor{S_{k,r,w}} - \ceiling{R_{k,r,w}}, &\text{if } 1-\{S_{k,r,w}\} \leq x \leq 1.
					\end{cases}
				\end{equation*}
				We also find in this case that
				\begin{equation*}
					\int_0^1 \big(\floor{S_{k,r,w}+x}-\ceiling{R_{k,r,w}+x}\big)
					\mu\left(\cI_{j,l}\right) \,\dv x
					= \mu\left(\cI_{j,l}\right)\left(S_{k,r,w}-R_{k,r,w}-1\right).
				\end{equation*}
				Next, let us look at
				\begin{equation*}
					\int_0^1 \mu\big(A_{k,r,w}(x) \cap [0, \{S_{k,r,w}+x\}]\big) \,\dv x.
				\end{equation*}
				We start off by noticing that
				\begin{equation*}
					\mu\big(A_{k,r,w}(x) \cap [0, \{S_{k,r,w}+x\}]\big)
					= \sum_{l=1}^{m_j} \mu\big(\cI_{j,l}\cap\left[0, \{S_{k,r,w}+x\}\right]\big).
				\end{equation*}
				Write $\cI_{j,l} = [b_{j,l}, c_{j,l}]$. Then, if $\{S_{k,r,w}\} \leq b_{j,l}$,
				\begin{equation*}
					\mu\left(\cI_{j,l}\cap\left[0, \{S_{k,r,w}+x\}\right]\right) = \begin{cases}
						0, &\text{if }0\leq x\leq b_{j,l}-\{S_{k,r,w}\}, \\[4pt]
						\{S_{k,r,w}\}+x-b_{j,l}, &\text{if }b_{j,l}-\{S_{k,r,w}\}\leq x\leq c_{j,l}-\{S_{k,r,w}\}, \\[4pt]
						c_{j,l}-b_{j,l}, &\text{if }c_{j,l}-\{S_{k,r,w}\} \leq x \leq 1-\{S_{k,r,w}\}, \\[4pt]
						0, &\text{if }1-\{S_{k,r,w}\} \leq x\leq 1.
					\end{cases}
				\end{equation*}
				If, on the other hand, $b_{j,l}\leq\{S_{k,r,w}\}\leq c_{j,l}$, then
				\begin{equation*}
					\mu\left(\cI_{j,l}\cap\left[0, \{S_{k,r,w}+x\}\right]\right) = \begin{cases}
						\{S_{k,r,w}\}+x-b_{j,l}, &\text{if }0\leq x\leq c_{j,l}-\{S_{k,r,w}\}, \\[4pt]
						c_{j,l}-b_{j,l}, &\text{if }c_{j,l}-\{S_{k,r,w}\}\leq x\leq 1-\{S_{k,r,w}\}, \\[4pt]
						0, &\text{if }1-\{S_{k,r,w}\} \leq x \leq 1+b_{j,l}-\{S_{k,r,w}\}, \\[4pt]
						\{S_{k,r,w}\}+x-1-b_{j,l}, &\text{if }1+b_{j,l}-\{S_{k,r,w}\} \leq x\leq 1,
					\end{cases}
				\end{equation*}
				whereas if $\{S_{k,r,w}\}\geq c_{j,l}$, then
				\begin{equation*}
					\mu\left(\cI_{j,l}\cap\left[0, \{S_{k,r,w}+x\}\right]\right) = \begin{cases}
						c_{j,l}-b_{j,l}, &\text{if }0\leq x\leq 1-\{S_{k,r,w}\}, \\[4pt]
						0, &\text{if }1-\{S_{k,r,w}\}\leq x\leq 1+b_{j,l}-\{S_{k,r,w}\}, \\[4pt]
						\{S_{k,r,w}\}+x-1-b_{j,l}, &\text{if }1+b_{j,l}-\{S_{k,r,w}\} \leq x \leq 1+c_{j,l}-\{S_{k,r,w}\}, \\[4pt]
						c_{j,l}-b_{j,l}, &\text{if }1+c_{j,l}-\{S_{k,r,w}\} \leq x\leq 1.
					\end{cases}
				\end{equation*}
				In all three cases, we find
				\begin{equation*}
					\int_0^1 \mu\big(\cI_{j,l}\cap\left[0, \{S_{k,r,w}+x\}\right]\big) \,\dv x
					= \mu(\cI_{j,l}) 
					- \sdfrac{1}{2}\big(c_{j,l}^2-b_{j,l}^2\big).
				\end{equation*}
				Furthermore, from the above we see that
				\begin{equation*}
					\begin{split}
						\int_0^1 \mu\big(\cI_{j,l}\cap\left[\{R_{k,r,w}+x\}, 1\right]\big) \,\dv x
						&= \int_0^1 \mu(\cI_{j,l}) 
						- \mu\big(\cI_{j,l}\cap\left[0,\{R_{k,r,w}+x\}\right]\big) \,\dv x \\
						&= \mu(\cI_{j,l}) - \left[\mu(\cI_{j,l})
						-\sdfrac{1}{2}(c_{j,l}^2-b_{j,l}^2)\right] \\
						&= \frac{1}{2}\big(c_{j,l}^2-b_{j,l}^2\big).
					\end{split}            
				\end{equation*}
				Therefore,
				\begin{equation}
					\label{E:IntegralEqualsIntervalMeasures}
					\begin{split}
						\int_0^1 \mu\big(A_{k,r,w}(x)\big) \,\dv x
						&= \sum_{l=1}^{m_j} \left( \int_0^1 
						\big(\floor{S_{k,r,w}+x}-\ceiling{R_{k,r,w}+x}\big)
						\mu\left(\cI_{j,l}\right) \,\dv x\right. \\
						&\phantom{= \sum_{l=1}^{m_j} \left(\right.}\left.
						\qquad+\int_0^1 
						\mu\big(\cI_{j,l}
						\cap\left[0,\, \{S_{k,r,w}+x\}\right]\big) \,\dv x\right.\\
						&\phantom{= \sum_{l=1}^{m_j} \left(\right.}\left. 
						\qquad\qquad\qquad+\int_0^1 
						\mu\big(\cI_{j,l}
						\cap\left[\{R_{k,r,w}+x\},\, 1\right]\big) \,\dv x \right)
						\\
						&=\sum_{l=1}^{m_j}\left(\mu\left(\cI_{j,l}\right)
						\left(S_{k,r,w}-R_{k,r,w}-1\right) + \mu(\cI_{j,l}) - \sdfrac{1}{2}\big(c_{j,l}^2-b_{j,l}^2\big) + 
						\sdfrac{1}{2}\big(c_{j,l}^2-b_{j,l}^2\big) \right)\\
						&= \sum_{l=1}^{m_j} \mu(\cI_{j,l})(S_{k,r,w}-R_{k,r,w}).
					\end{split}          
				\end{equation}
				From \eqref{E:MeasureAHatIntervalPlus} and \eqref{E:MeasureAHatIntervalMinus}, there exist constants $\alpha_{0,j,l}$ and $\alpha_{1,j,l}$ (depending also on $\lambda$, $t$, $a_j$, $q_j$, $a_{j+1}$, and~$q_{j+1}$) such that
				\begin{equation*}
					\mu(\cI_{j,l}) = \alpha_{0,j,l} + \frac{\alpha_{1,j,l}}{w}.
				\end{equation*}
				By summing over $l$ (noting that $m_j$ depends only on $k$, $r$, and $j$), and inserting 
				\eqref{E:RecallRS},
				we find constants $\alpha'_{0,j}$, $\alpha'_{1,j}$, and $\alpha'_{2,j}$, such that, for $w\in[\gamma_j, \gamma_{j+1}]$,
				\begin{equation*}
					f(w) = w\int_0^1 \mu\big(A_{k,r,w}(x)\big) \,\dv x 
					=  \alpha'_{0,j} + \frac{\alpha'_{1,j}}{w} + \frac{\alpha'_{2,j}}{w^2}.
				\end{equation*}
				Since these constants are bounded (depending on $\lambda$ and $t$), and $N$ is also bounded (depending on $t$), we find
				\begin{align*}
					\Var_{[M_{k,r}^-,M_{k,r}^+]}(f) &= \sum_{j=0}^N \Var_{[\gamma_j,\gamma_{j+1}]}(f) \\
					&= \sum_{j=0}^N \int_{\gamma_j}^{\gamma_{j+1}} \abs{f'(w)}\,\dv w \\
					&= \sum_{j=0}^N \int_{\gamma_j}^{\gamma_{j+1}} 
					\abs{-\sdfrac{\alpha'_{1,j}}{w^2} - \sdfrac{2\alpha'_{2,j}}{w^3}} \,\dv w \\
					&\ll \sum_{j=0}^N \int_{\gamma_j}^{\gamma_{j+1}} 1 \,\dv w \\
					&\ll 1,
				\end{align*}
				and this concludes the proof of the lemma.    
			\end{proof}
			
			\subsection{\texorpdfstring{A formula for $G_t(\lambda)$ and the proof of Theorem~\ref{T:GLambdaAlltConstants}}{A formula for Gt(lambda) and the proof of Theorem~\ref{T:GLambdaAlltConstants}}}\label{SS:ShapeOfGtLambda}
			The proof of Lemma~\ref{L:AlltOuterSum} helps 
			us to better describe the gap distribution function
			$G_t(\lambda)$, whose general `recipe' is given by Theorem~\ref{T:GLambdaAlltConstants}. Firstly, we shall fix an integer $h$ and suppose that $t$ lies in the interval
			\begin{equation*}
				\frac{2}{h+1} < t \leq \frac{2}{h}.
			\end{equation*}
			Then, as we saw above, we have
			\begin{equation*}
				w\int_0^1 \mu\big(A_{k,r,w}(x)\big)\,\dv x 
				= w(S_{k,r,w}-R_{k,r,w})\sum_{l=1}^{n_{k,r}} \mu(\cI_l).
			\end{equation*}
			Furthermore, we also saw that, for each interval $\cI_l$, 
			we have
			\begin{equation*}
				\mu(\cI_l) 
				= \begin{cases}
					Q_{1,l} + Q_{2,l}\Ldfrac{\lambda t}{w}, 
					&\text{if } \Ldfrac{\lambda t}{2w} \leq Q_{3,l}; \\[9pt]
					0, &\text{if } \Ldfrac{\lambda t}{2w} \geq Q_{3,l},
				\end{cases}
			\end{equation*}
			for some rational numbers $Q_{1,l}$, $Q_{2,l}$, and $Q_{3,l}$ with denominators (in their irreducible form) at most~$2h$. Furthermore, we also see that
			\begin{equation*}
				S_{k,r,w}-R_{k,r,w} 
				= \begin{cases}
					\Ldfrac{2}{w} - \Ldfrac{(k+r)t}{w^2}, 
					&\text{if } w\leq \Ldfrac{(k+r+1)t}{2}; \\[9pt]
					\Ldfrac{(k+r+2)t}{w^2} - \Ldfrac{2}{w}, 
					&\text{if } w\geq \Ldfrac{(k+r+1)t}{2}.
				\end{cases}
			\end{equation*}
			Hence, $w(S_{k,r,w}-R_{k,r,w})\sum_{l=1}^{n_{k,r}}\mu(\cI_l)$ is a piece-wise function of $w$ with breaking points at
			\begin{equation*}
				w=\frac{(k+r+1)t}{2} \text{ and } w=\frac{\lambda t}{2Q_l},
			\end{equation*}
			for some rational numbers $Q_l$. So, we partition $[M_{k,r}^-, M_{k,r}^+]$ by
			\begin{equation*}
				M_{k,r}^- = B_0 < B_1 < \cdots < B_N < B_{N+1} = M_{k,r}^+.
			\end{equation*}
			In each interval $[B_j, B_{j+1}]$, we have constants $K_{k,r,j,1}$, $K_{k,r,j,2}$, $K_{k,r,j,3}$ and $K_{k,r,j,4}$ such that
			\begin{equation*}
				w\int_0^1 \mu\big(A_{k,r,w}(x)\big)\,\dv x 
				= K_{k,r,j,1} + K_{k,r,j,2} \frac{t}{w} + K_{k,r,j,3} \frac{\lambda t}{w} + K_{k,r,j,4} \frac{\lambda t^2}{w^2}.
			\end{equation*}
			The partition will depend on $\lambda$. 
			For example, the order of the partition may depend on whether, for some rational number $Q$, 
			is $\lambda t/2Q \leq (k+r+1)t/2$ or $\lambda t/2Q \geq (k+r+1)t/2$? 
			Then, $G_{t}(\lambda)$ will be a piece-wise function with breaking points at least at rational numbers with denominators no more than~$2h$. Furthermore, the restriction $M_{k,r}^-\leq M_{k,r}^+$ implies that $\lambda \leq 2/t$. (Since this is true for all~$k,r$, we see that $G_t(\lambda) = 0$ for all $\lambda\geq 2/t$.) Also, the definition of $M_{k,r}^-$ as the minimum of $\lambda t/2$ and $(k+r)t/2$ will also give us more breaking points at rational numbers. Furthermore, in the cases when $M_{k,r}^+=1$, our partition will also depend on whether $\lambda t/2Q \leq 1$ or $\lambda t/2Q \geq 1$, so we get breaking points at $\lambda = 2Q/t$. All in all, we have the following partition
			\begin{equation*}
				0 = \Lambda_0(t) < \Lambda_1(t) < \cdots < \Lambda_M(t) < \Lambda_{M+1}(t) = 2/t,
			\end{equation*}
			such that each $\Lambda_i(t)$ can be written as either
			\begin{equation*}
				\Lambda_i(t) = Q_i,
			\end{equation*}
			where $Q_i$ is a rational number no greater than $2/t$, with denominator no greater than $2h$, or
			\begin{equation*}
				\Lambda_i(t) = \frac{2}{t} Q_i,
			\end{equation*}
			where $Q_i$ is a rational less than or equal to $1$, with denominator no greater than $2h$. 
			This partition depends on $t$, so we need to partition the interval $(2/(h+1), 2/h]$ into
			\begin{equation*}
				\frac{2}{h+1} = T_{h,0} < T_{h,1} < \cdots < T_{h, K_h} < T_{h, K_h+1} = \frac{2}{h},
			\end{equation*}
			such that throughout each interval $[T_{h, k}, T_{h, k+1}]$, the sequence $\Lambda_i(t)$ remains in the same order. Each $T_{h,k}$ will be a rational number with denominator no bigger than $2h(h+1)$.
			Assume that $\lambda$ lies in some interval $[\Lambda_i(t), \Lambda_{i+1}(t)]$ such that the partition $B_0 < \cdots < B_{N+1}$ is fixed. Each $B_j$ is of the form either
			\begin{equation*}
				B_j = \alpha_{k,r,j} \text{ or } \alpha_{k,r,j}t \text{ or } \alpha_{k,r,j}\lambda t.
			\end{equation*}
			Therefore, we have
			\begin{align*}
				\int_{B_j}^{B_{j+1}}\bigg( K_{k,r,j,1} + K_{k,r,j,2} \frac{t}{w} + K_{k,r,j,3} \frac{\lambda t}{w} + K_{k,r,j,4} \frac{\lambda t^2}{w^2}\bigg) \,\dv w &= \beta_{k,r,j,1} + \beta_{k,r,j,2} t + \beta_{k,r,j,3} \lambda t + \beta_{k,r,j,4} \lambda t^2 \\
				&\phantom{=}+\beta_{k,r,j,5}t\log(\lambda) + \beta_{k,r,j,6}\lambda t\log(\lambda) \\
				&\phantom{=}+\beta_{k,r,j,7}t\log(t) + \beta_{k,r,j,8}\lambda t\log(t),
			\end{align*}
			for some constants $\beta_{k,r,j,1}$, $\beta_{k,r,j,2}$, $\beta_{k,r,j,3}$, $\beta_{k,r,j,4}$, $\beta_{k,r,j,5}$, $\beta_{k,r,j,6}$, $\beta_{k,r,j,7}$, and $\beta_{k,r,j,8}$. Consequently,
			\begin{align*}
				\int_{M_{k,r}^-}^{M_{k,r}^+} w\int_0^1 
				\mu\big(A_{k,r,w}(x)\big)\,\dv x \,\dv w 
				&= \beta'_{k,r,1} + \beta'_{k,r,2} t + \beta'_{k,r,3} \lambda t + \beta'_{k,r,4} \lambda t^2 \\
				&\phantom{=}+\beta'_{k,r,5}t\log(\lambda) + \beta'_{k,r,6}\lambda t\log(\lambda) \\
				&\phantom{=}+\beta'_{k,r,7}t\log(t) + \beta'_{k,r,8}\lambda t\log(t).
			\end{align*}
			For any $t\in[T_{h,j}, T_{h,j+1}]\subseteq[2/(h+1), 2/h]$, and any $\lambda$ in our interval $[\Lambda_i(t), \Lambda_{i+1}(t)]$, the $k,r$ over which we sum are fixed. Hence, there are constants $\kappa_{h,j,i,1}$, $\kappa_{h,j,i,2}$, $\kappa_{h,j,i,3}$, $\kappa_{h,j,i,4}$, $\kappa_{h,j,i,5}$, $\kappa_{h,j,i,6}$, $\kappa_{h,j,i,7}$, and $\kappa_{h,j,i,8}$ such that
			\begin{align*}
				G_t(\lambda) &= \kappa_{h,j,i,1} + \kappa_{h,j,i,2}t + \kappa_{h,j,i,3}\lambda t + \kappa_{h,j,i,4}\lambda t^2 \\
				&\phantom{=}+ \kappa_{h,j,i,5}t\log(\lambda) + \kappa_{h,j,i,6}\lambda t\log(\lambda) \\
				&\phantom{=}+ \kappa_{h,j,i,7}t\log(t) + \kappa_{h,j,i,8}\lambda t\log(t).
			\end{align*}
			This completes the description of the gap distribution function formula and concludes the proof of Theorem~\ref{T:GLambdaAlltConstants}.


		\end{document}